\numberwithin{equation}{section}
\theoremstyle{plain}
\newtheorem{prop}{Proposition}
\newtheorem{theo}[prop]{Theorem}
\newtheorem{coro}[prop]{Corollary}
\newtheorem{lemm}[prop]{Lemma}
\theoremstyle{definition}
\newtheorem{ques}[prop]{Question}
\newtheorem{conj}[prop]{Conjecture}
\newtheorem{rema}[prop]{Remark}
\newtheorem{exam}[prop]{Example}
\def\lra{\longrightarrow}
\def\ra{\rightarrow}
\def\bA{{\mathbb A}}
\def\bP{{\mathbb P}}
\def\bQ{{\mathbb Q}}
\def\bZ{{\mathbb Z}}
\def\bR{{\mathbb R}}
\def\bC{{\mathbb C}}
\def\bF{{\mathbb F}}
\def\codim{\mathrm{codim}}
\def\Bl{\mathrm{Bl}}
\def\Aut{\mathrm{Aut}}
\def\Hom{\mathrm{Hom}}
\def\Bir{\mathrm{Bir}}
\def\lim{\mathrm{lim}}
\def\Sym{\mathrm{Sym}}
\def\Spec{\mathrm{Spec}}
\def\ori{\mathrm{or}}
\author{Maxim Kontsevich}
\address{Institut des Hautes \'Etudes Scientifiques, 
35 route de Chartres, 91440 Bures-sur-Yvette, France}
\email{maxim@ihes.fr}
\author{Vasily Pestun}
\address{Institut des Hautes \'Etudes Scientifiques, 
35 route de Chartres, 91440 Bures-sur-Yvette, France}
\email{vasily.pestun@ihes.fr }
\author{Yuri Tschinkel}
\address{Courant Institute\\
                New York University \\
                New York, NY 10012 \\
                USA }
\email{tschinkel@cims.nyu.edu}
\address{Simons Foundation\\
160 Fifth Avenue\\
New York, NY 10010\\
USA}
\title[Equivariant birational geometry]{Equivariant birational geometry and modular symbols}
\begin{document}
\date{August 2019}

\begin{abstract}
We introduce new invariants in equivariant birational geometry and study their relation to modular symbols and cohomology of arithmetic groups. 
\end{abstract}

\maketitle

\section{Introduction}
\label{sect:intro}

Let $G$ be a finite abelian group and 
$$
A=G^\vee = \Hom(G, \bC^\times)
$$
the group of characters of $G$. Fix an integer $n\ge 2$. 
Consider the $\bZ$-module
$$
\mathcal B_{n}(G)
$$
generated by symbols
$$
[a_1,\ldots, a_n], \quad a_i\in A,
$$
such that $a_1, \ldots, a_n$ generate $A$, i.e., 
$$
\sum_i \bZ a_i =A,
$$
and subject to relations:
\begin{itemize}
\item[(S)]
for all permutations $\sigma \in \mathfrak S_n$ and all $a_1,\ldots, a_n\in A$ we have
$$
[a_{\sigma(1)} , \ldots, a_{\sigma(n)} ] = [a_1,\ldots, a_n],
$$ 
\item[(B)] 
for all $2\le k \le n$, all $a_1,\ldots, a_k\in A$, and all 
$b_1,\ldots, b_{n-k} \in A$ such that 
$$
\sum_i \bZ a_i + \sum_j \bZ b_j = A
$$
we have
$$
[a_1,\ldots ,a_k, b_1, \ldots , b_{n-k}] =
$$
$$
= \sum_{1\le i\le k, \,\, a_i\neq a_{i'}, \forall i'<i}    
[a_1-a_i, \ldots , a_i (\text{on $i$-th place}),  \ldots, a_k-a_i, b_1, \ldots, b_{n-k}]
$$
\end{itemize}
We have,
$$
\mathcal B_1(G) = \begin{cases} \bZ^{\phi(N)} & \text{if } G=\bZ/N\bZ, N\ge 1 \\ 0 & \text{otherwise.}\end{cases}
$$
For example, for $n=4$ and $k=3$ and $a_1=a_2=a$ and $a_3=a'\neq a$ 
and $b_1=b$, the relation translates to 
\begin{equation}\label{eqn:n4k3B}
[a,a,a',b] = [a,0,a'-a,b] + [a-a', a-a', a',b].
\end{equation}
When $n=2$ there is only one possibility for $k$, namely, $k=2$.

\begin{exam}
\label{exam:1} 
The group 
$\mathcal B_2(G)$ is generated by symbols $[a_1,a_2]$ such that 
$$
a_1,a_2\in \bZ/N\bZ, \quad   \gcd(a_1,a_2,N)=1,
$$ 
and subject to relations
\begin{itemize}
\item $[a_1,a_2]= [a_2,a_1]$, 
\item $[a_1,a_2]= [a_1,a_2-a_1] + [a_1-a_2,a_2]$, where $a_1\neq a_2$,
\item $[a,a]= [a,0]$, for all $a\in \bZ/N\bZ$, $\gcd(a,N)=1$.
\end{itemize} 
For $p\ge 5$ a prime, the $\bQ$-rank of $\mathcal B_2(\bZ/p\bZ)$ equals
\begin{equation}
\label{eqn:23}
\frac{p^2+23}{24}.
\end{equation}
For us, this was the first sign that automorphic forms play a role in this theory. 
We will discuss the connection to modular symbols in Section~\ref{sect:modular}.
\end{exam}

\begin{rema}
\label{rema:comment}
The group $\mathcal B_2(\bZ/p\bZ)$ can have torsion, e.g., 
for $p=37$, there is $\ell$-torsion for $\ell=3$ and $19$.  

For $n\ge 3$, the system of relations in $\mathcal B_n(G)$ is highly overdetermined. Nevertheless, 
computer experiments show that nontrivial solutions exist, e.g., 
for  $G=\bZ/27\bZ$ or $\bZ/43 \bZ$, the $\bQ$-rank of $\mathcal B_4(G)$ equals 1. 
\end{rema}

Let $X$ be a smooth irreducible projective algebraic variety of dimension $n\ge 2$, 
over a fixed algebraically closed field of characteristic zero (e.g., $\bC$), 
equipped with a birational, generically free action of $G$. 
After $G$-equivariant resolution of singularities, we may assume that the 
action of $G$ is regular. To such an $X$ we associate an element of $\mathcal B_n(G)$ as follows: 
Let 
\begin{equation}
\label{eqn:dec}
X^G=\coprod_{\alpha\in \mathcal{A}} \,  F_{\alpha}
\end{equation}
be the $G$-fixed point locus; it is a disjoint union of closed smooth irreducible subvarieties of $X$. 
Put
$$
\dim(F_{\alpha})=n_{\alpha}\le n-1.
$$
On each irreducible component $F_{\alpha}$ we fix a point $x_{\alpha} \in F_{\alpha}$  
and consider the action of $G$ in its tangent space 
$\mathcal T_{x_{\alpha}}X$ in $X$; it decomposes into eigenspaces of characters $a_{1,\alpha}, \ldots, a_{n,\alpha}$, defined up to permutation of indices (here we identify algebraic characters of $G$ with $\bC^\times$-valued characters). By the assumption that the action of $G$ is generically free, we have
$$
\sum_{i}  \bZ a_{i,\alpha} = A.
$$
This does not depend on the choice of $x_{\alpha}\in F_{\alpha}$. 
The dimension $\dim(F_{\alpha})$ equals the number of 
zeros among the $a_{i,\alpha}$. 
Thus we have a symbol, for each $\alpha$,
$$
[a_{1,\alpha}, \ldots, a_{n,\alpha}] \in \mathcal B_n(G).
$$
Put
\begin{equation}
\label{eqn:main} 
\beta(X) := \sum_{\alpha} \, [a_{1,\alpha}, \ldots, a_{n,\alpha}] 
\end{equation}
One of our main results is that expression \eqref{eqn:main}, considered as an element in $\mathcal B_n(G)$, 
is invariant under $G$-equivariant blowups. 

\begin{theo}
\label{thm:main}
The class $\beta(X) \in \mathcal B_n(G)$ is a $G$-equivariant birational invariant. 
\end{theo}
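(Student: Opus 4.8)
The strategy is to reduce the statement to a local computation via the weak factorization theorem. By weak factorization, any $G$-equivariant birational map between smooth projective $G$-varieties factors as a chain of $G$-equivariant blowups and blowdowns along smooth $G$-invariant centers. Hence it suffices to show that $\beta(X)$ is unchanged under a single $G$-equivariant blowup $\pi\colon \widetilde X = \mathrm{Bl}_Z X \to X$ along a smooth irreducible $G$-invariant subvariety $Z\subseteq X$. Since $\beta$ is a sum of local contributions indexed by the components of the fixed locus, and since $\pi$ is an isomorphism away from $Z$, the only components of $\widetilde X^G$ that differ from those of $X^G$ are the ones lying over $Z$ (and its intersections with $X^G$). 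So the whole theorem comes down to comparing, for each component $F_\alpha$ of $X^G$ meeting $Z$, the symbol $[a_{1,\alpha},\ldots,a_{n,\alpha}]$ with the sum of symbols coming from the components of $\widetilde X^G$ over $F_\alpha\cap Z$; and this is governed entirely by the representation of $G$ on the normal bundle of $Z$ and on the relevant tangent spaces.

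**The local model.** Fix a component $F=F_\alpha$ of $X^G$ with tangent characters $a_1,\ldots,a_n$. Near the generic point of $F\cap Z$ (assume for now $Z\subseteq F$; the general case is similar but one must also track the characters normal to $F$), the blowup replaces a neighborhood of $F\cap Z$ by the projectivized normal bundle $\mathbb P(N_{Z/X})$. The $G$-action on the normal directions has characters, say, $a_{i_1},\ldots,a_{i_k}$ — those tangent characters of $F$ that are normal to $Z$. The new fixed components lying over $F\cap Z$ correspond to the $G$-fixed points of $\mathbb P$ of the normal representation, i.e., to the \emph{distinct} characters $c$ occurring among $a_{i_1},\ldots,a_{i_k}$; for each such $c$, the fixed component is (birationally) $(F\cap Z)\times\mathbb P(\text{eigenspace of }c)$, and its tangent characters are obtained from $a_1,\ldots,a_n$ by replacing each normal character $a_{i_j}$ by $a_{i_j}-c$ (the character of $\mathcal O(-1)\otimes$ eigenline direction) except that one copy of $c$ is kept (the direction along the exceptional $\mathbb P$), while the characters tangent to $Z$ — including the zeros — are untouched. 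Summing over the distinct values $c$ with the convention that the first occurrence in each eigengroup is the one that "stays as $c$" reproduces \emph{exactly} the right-hand side of relation (B), applied with the $a$-block being the normal characters and the $b$-block being the characters tangent to $Z$. This is the heart of the argument, and it is precisely why relation (B) was written the way it was.

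**Bookkeeping issues and the main obstacle.** Several points need care. First, when $Z$ is not contained in $X^G$, one must analyze $F\cap Z$ and the splitting of the normal bundle $N_{Z/X}|_{F\cap Z}$ into the part tangent to $F$ (contributing zero characters, i.e., increasing the dimension count) and the part transverse to $F$ (contributing the genuine $a_i$'s that get shifted); the exceptional divisor contributes new fixed loci only from the transverse part, and the tangent-to-$F$ directions along the $\mathbb P$-fiber just add to the zero count, keeping the symbol's dimension bookkeeping consistent. Second, one must check that the relation (B) is being applied legitimately, i.e., that the relevant subsets of characters still generate $A$ — this follows because $N_{Z/X}$ together with $T Z$ recovers $TX$, whose characters generate $A$ by generic freeness, and generic freeness is preserved along $Z$. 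Third, the symmetry relation (S) is needed to match up orderings, and the degenerate cases of (B) (e.g.\ when a normal character is zero, or repeated, as in \eqref{eqn:n4k3B}) must be checked to correspond to the degenerate blowup geometry (blowing up along a locus not meeting, or meeting non-transversally, the fixed locus). I expect the genuine obstacle to be the careful local-to-global patching when $Z\not\subseteq X^G$: one must verify that the character data computed at a single point of each component $F_\beta'$ of $\widetilde X^G$ agrees with the naive prediction from the normal-bundle representation \emph{globally on that component}, which requires knowing the eigenbundle decomposition of $N_{Z/X}$ is $G$-equivariantly locally constant in the appropriate sense — true because $G$ is finite abelian and eigen-decompositions are canonical, but it deserves an explicit argument. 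Once the single-blowup invariance is established, weak factorization finishes the proof immediately.
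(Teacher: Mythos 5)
Your overall strategy is the paper's strategy: reduce to a single $G$-equivariant blowup by equivariant weak factorization, and then recognize that the combinatorics of how fixed components and tangent characters transform under a blowup is exactly what relation (B) encodes. The blowup-character computation you describe (keep one copy of the chosen normal eigenvalue $c$ along the $\mathcal O(-1)$ direction, shift the other normal characters by $-c$, leave the tangent-to-center characters alone) is correct and is also what the paper computes.

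However, there is a real gap in the case analysis, and it is precisely the part you wave at with ``the general case is similar.'' Your framing is to compare the symbol of $F_\alpha$ with ``the sum of symbols coming from the components of $\widetilde X^G$ over $F_\alpha\cap Z$.'' This is the right comparison only when $F_\alpha$ is contained in the blowup center $W$ (the paper's case $d_1=0$), because only then does $F_\alpha$ disappear from $\widetilde X^G$ and get replaced by the exceptional fixed loci. When $F_\alpha\not\subseteq W$ near a component of $W^G$ (the paper's cases with $d_1\ge 1$), the strict transform $\widetilde F_\alpha$ is still an irreducible component of $\widetilde X^G$; it is birational to $F_\alpha$, has the same tangent character data at a generic point, and contributes the same symbol. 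In that situation the statement you actually need is the \emph{vanishing}
$$
\sum_{i=1}^m\bigl[\,\underbrace{-a^i,\dots}_{d_1},\,\underbrace{a^1-a^i,\dots}_{\kappa_1},\dots,a^i,\underbrace{0,\dots}_{\kappa_i-1},\dots,\underbrace{a^m-a^i,\dots}_{\kappa_m},\,\bar b\,\bigr]=0
$$
of the sum of the \emph{new} exceptional contributions, and it is not obvious a priori. The paper proves it by a separate application of (B) to the sequence that begins with the $d_1$ zeros coming from the trivial part of $N_{W/X}$: the first term of the right-hand side of (B) then coincides with the left-hand side, and cancelling them yields exactly the displayed vanishing. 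Your ``sum over distinct $c$ including $c=0$'' formula secretly contains this argument, because the $c=0$ term happens to give the same symbol as $F_\alpha$; but you present the $c=0$ term as though it were a new exceptional component (it is not — it sits inside the strict transform), and you never state the vanishing that is the crux of this case. You also do not treat the remaining boundary case $d_4=0$ (no nontrivial normal characters), in which the exceptional divisor contributes nothing new at all and one only has to note that the strict transform carries the same symbol. Finally, your separate worry about ``local-to-global patching'' is real but mild: since $G$ is finite and $A$ discrete, the eigenvalue multiset of $N_{W/X}$ is locally constant along $W^G$, so the analysis is component-by-component over connected components $Z\subset W^G$, exactly as the paper does; the case split you chose (``$Z\subseteq F$'' versus not, i.e., $d_3=0$ versus $d_3\ge 1$) is orthogonal to the one that actually matters ($d_1=0$ versus $d_1\ge 1$) and does not help organize the argument.
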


Now we introduce another $\bZ$-module
$$
\mathcal M_n(G),
$$
generated by symbols
$$
\langle a_1, \ldots, a_n\rangle, 
$$
such that $a_1,\ldots, a_n$ generate $A$, 
and subject to relations which are almost identical to those for $\mathcal B_n(G)$:
\begin{itemize}
\item[(S)] 
for all $\sigma \in \mathfrak S_n$ and all $a_1,\ldots, a_n\in A$ we have
$$
\langle a_{\sigma(1)} , \ldots, a_{\sigma(n)} \rangle  = \langle a_1,\ldots, a_n\rangle ,
$$ 
\item[(M)] 
for all $2\le k \le n$, all $a_1,\ldots, a_k\in A$ and all 
$b_1,\ldots, b_{n-k} \in A$, such that 
$$
\sum_i \bZ a_i + \sum_j \bZ b_j = A,
$$
we have
$$
\langle a_1,\ldots ,a_k, b_1, \ldots, b_{n-k}\rangle  =
$$
$$
= \sum_{1\le i\le k}    
\langle a_1-a_i, \ldots , a_i (\text{on $i$-th place}),  \ldots, a_k-a_i, b_1, \ldots, b_{n-k}\rangle. 
$$
\end{itemize}
Note that we eliminated the constraint $a_i\neq a_{i'}$, for $i'<i$, from the sum. 
Clearly,  
$$
\mathcal M_1(G) = \begin{cases} \bZ^{\phi(N)} & \text{if } G=\bZ/N\bZ, N\ge 1 \\ 0 & \text{otherwise.}\end{cases}
$$
For $n=4$ and $k=3$ and $a_1=a_2=a$ and $a_3=a'\neq a$ and $b_1=b$, the relation (M) translates to 
\begin{equation}\label{eqn:n4k3M}
\langle a,a,a',b\rangle  = \langle a,0,a'-a,b\rangle  + \langle 0,a,a'-a,b\rangle  + \langle a-a',a-a',a',b\rangle.
\end{equation}
The right side equals to 
$$
2\langle a,0,a'-a,b\rangle  + \langle a-a',a-a',a',b\rangle,
$$
by symmetry relations. Notice the difference between \eqref{eqn:n4k3M} and \eqref{eqn:n4k3B}.

In Section~\ref{sect:hecke}, we show that relation (M) follows from the subcase $k=2$.

These groups carry naturally defined, commuting, linear operators 
$$
T_{\ell,r} : \mathcal M_n(G) \ra \mathcal M_n(G),
$$
for all primes $\ell$ coprime to the order of $G$ and all $1\le r\le n$. We call these {\em Hecke operators}. One can consider their spectrum 
for 
$$
\mathcal M_n(G)\otimes \bar{\bQ} \quad \text{ or } \quad \mathcal M_n(G)\otimes \bar{\mathbb F}_p,
$$
where  $p$ is any prime not dividing $\# G$, the order of the group $G$. 
We expect that the joint spectrum of $T_{\ell,r}$ is related to automorphic forms and present evidence for this in Sections~\ref{sect:alge} and \ref{sect:modular}. 

Consider the map 
$$
\mu: \mathcal B_n(G)\ra \mathcal M_n(G)
$$
defined on symbols as follows:
\begin{itemize}
\item[($\bm{\mu}_0$)]  \quad 
$[a_1,\ldots, a_n] \mapsto \langle a_1,\ldots, a_n\rangle,
\quad  \text{ if all $a_1,\ldots, a_n \neq 0$},$
\item[($\bm\mu_1$)] \quad 
$
[0, a_2,\ldots, a_n] \mapsto 2\langle 0, a_2,\ldots, a_n\rangle, \quad \text{if all $a_2,\ldots, a_n \neq 0$},
$
\item[($\bm\mu_2$)] 
 \quad $ 
[0,0,a_3,\ldots, a_n] \mapsto 0, \quad 
\text{ for all $a_3,\ldots, a_n$}, 
$
\end{itemize}
and extended by $\bZ$-linearity.

\begin{theo}
\label{thm:main2}
The map $\mu$ is a well-defined homomorphism, which is a surjection modulo 2-torsion. 
\end{theo}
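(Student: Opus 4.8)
The plan is to derive both assertions from a single structural fact about $\mathcal M_n(G)$: a symbol with two or more zero entries is zero. I will first prove this, then verify that the $\bZ$-linear map defined on the free module by $(\bm\mu_0)$--$(\bm\mu_2)$ annihilates the relations (S) and (B), so that it descends to a well-defined group homomorphism $\mu\colon\mathcal B_n(G)\to\mathcal M_n(G)$, and finally read surjectivity modulo $2$-torsion directly off the same three formulas.

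\emph{Vanishing lemma.} I claim that $\langle c_1,\dots,c_n\rangle=0$ in $\mathcal M_n(G)$ whenever at least two of the $c_i$ vanish. After applying (S) we may assume $c_1=c_2=0$; since $c_1,\dots,c_n$ generate $A$ so do $c_3,\dots,c_n$, hence the instance of (M) with $k=2$, $a_1=a_2=0$, $b_j=c_{j+2}$ is admissible and reads $\langle 0,0,c_3,\dots,c_n\rangle=2\langle 0,0,c_3,\dots,c_n\rangle$, forcing the symbol to vanish. (If $G$ is trivial all symbols have only zero entries and everything below is immediate, so assume $G\neq 1$.)

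\emph{Well-definedness.} Since $\mathcal B_n(G)$ is the free $\bZ$-module on admissible symbols modulo the submodule generated by all instances of (S) and (B), it suffices to send each such instance to $0$; $\bZ$-linearity then yields the homomorphism property. Instances of (S) go to $0$ because $(\bm\mu_0)$--$(\bm\mu_2)$ depend only on the multiset of entries and on how many of them are zero. For (B), fix an instance with data $(k;\underline a;\underline b)$, $\underline a=(a_1,\dots,a_k)$, $\underline b=(b_1,\dots,b_{n-k})$; let $V$ be the set of values occurring among the $a_i$, with $\mu_v$ the multiplicity of $v\in V$, and for $v\in V$ let $S_v$ be the symbol obtained from $[\underline a,\underline b]$ by keeping one occurrence of $v$, subtracting $v$ from the other $a$-entries, and leaving the $b$-entries alone; its entries still generate $A$, and $S_0$ (when $0\in V$) is the original symbol. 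Collecting equal terms via (S), the relation (B) and the corresponding admissible instance of (M) read
$$
[\underline a,\underline b]=\sum_{v\in V}[S_v],\qquad
\langle \underline a,\underline b\rangle=\sum_{v\in V}\mu_v\,\langle S_v\rangle .
$$
Now apply $\mu$ to the left-hand identity: writing $z_b$ for the number of zeros among the $b_j$, the symbol $S_v$ has $\mu_v-1+z_b$ zero entries for $v\neq 0$ and as many zeros as $[\underline a,\underline b]$ for $v=0$, so $\mu([S_v])$ equals $0$, $\langle S_v\rangle$, or $2\langle S_v\rangle$ by an explicit rule in $\mu_v$ and $z_b$. A short case check, organized by $z_b\in\{0,1,\ge 2\}$ and by whether $0\in V$, shows each time that the resulting identity follows from the displayed instance of (M) (possibly multiplied by $2$) together with the vanishing lemma, which kills every $\langle S_v\rangle$ with $\mu_v\ge 3$ and kills $\langle S_0\rangle$ and $\langle\underline a,\underline b\rangle$ whenever those have at least two zeros. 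Hence $\mu$ annihilates (B) and is well defined.

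\emph{Surjectivity modulo $2$-torsion, and the main difficulty.} Every generator $\langle c_1,\dots,c_n\rangle$ of $\mathcal M_n(G)$ is, up to a factor $2$, in the image: it is $\mu([c_1,\dots,c_n])$ when no $c_i$ vanishes; $2\langle 0,c_2,\dots,c_n\rangle=\mu([0,c_2,\dots,c_n])$ lies in the image when exactly one vanishes; and it is $0$ by the vanishing lemma when two or more vanish. Thus $\mathcal M_n(G)/\mu(\mathcal B_n(G))$ is generated by classes killed by $2$, i.e.\ $2\,\mathcal M_n(G)\subseteq\mu(\mathcal B_n(G))$, which is the asserted surjectivity modulo $2$-torsion. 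The only genuinely delicate point is the case analysis in the previous step: (B) sums over first occurrences only whereas (M) sums over all occurrences, and one must recognize that the terms present in (M) but absent from (B) are exactly the symbols carrying an extra zero entry, so that after applying $\mu$ they are governed by the vanishing lemma rather than being truly missing. The hardest instances are those where a value among the $a_i$ has multiplicity $\ge 3$ or where $0$ itself occurs among the $a_i$; both are dispatched by this same mechanism.
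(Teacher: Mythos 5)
Your proof is correct, and it takes a genuinely cleaner route than the paper's. The paper first establishes six corollaries of (M) (namely $\langle 0,0,\ldots\rangle=0$, $\langle a,a,\ldots\rangle=2\langle a,0,\ldots\rangle$, $\langle a,a,0,\ldots\rangle=0$, $\langle a,a,a',a',\ldots\rangle=0$, $\langle a,a,a,\ldots\rangle=0$, $\langle a,-a,\ldots\rangle=0$) and then runs a long case tree (C0)/(C1)/(C2), each further split by the multiplicity pattern of $\bar a$, ending up with roughly fifteen subcases, each dispatched by citing the relevant corollary. Your argument instead rewrites both (B) and (M) as sums indexed by the \emph{distinct} values $v$ of the $a_i$, with (M) carrying the extra weight $\mu_v$, and then observes that the difference $\sum_v(\mu_v-1)\langle S_v\rangle$ is killed in every regime by the single ``two-zero'' vanishing lemma (your lemma is the paper's corollary (1)), together with the same instance of (M) applied once more to absorb the $v=0$ contributions. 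This removes the need for corollaries (2)--(6) and replaces the branching case analysis with a uniform counting argument in the pair $(\mu_v,z_b)$. I verified your nine subcases $z_b\in\{0,1,\ge 2\}$, $\mu_0\in\{0,1,\ge 2\}$; they all close as you claim, including the slightly delicate $\mu_0\ge 1$ cases where the original symbol reappears inside (M) and must be cancelled. Two cosmetic remarks: the parenthetical ``kills every $\langle S_v\rangle$ with $\mu_v\ge 3$'' understates the lemma's reach when $z_b\ge 1$ (it then kills $\mu_v\ge 2$, indeed $\mu_v\ge 3-z_b$), but since you have already recorded the exact count $\mu_v-1+z_b$ of zeros in $S_v$ the reader can recover the correct statement; and it is worth saying explicitly that each $S_v$ satisfies the generation condition because its entries together $\bZ$-span the original $a_i$'s and $b_j$'s --- you note this, and it is needed for the instances of (M) you invoke to be admissible. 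The surjectivity-mod-$2$-torsion step is identical in spirit to the paper's, reading $2\,\mathcal M_n(G)\subseteq\mu(\mathcal B_n(G))$ directly off $(\bm\mu_0)$--$(\bm\mu_2)$ and the vanishing lemma.
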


Note that 
$$
\langle 0,0,a_3,\ldots, a_n\rangle =0 \in \mathcal M_n(G)
$$
which follows from the relations by putting 
$$
k=2, a_1=a_2=0, b_i=a_{i+2}, \text{ for all }  i=1,\ldots, n-2.
$$  

We expect that $\mu$ is an isomorphism, modulo torsion (see Conjectures~\ref{conj:test} and \ref{conj:order}). 

\

Our notation $\mathcal B_n(G)$ and  $\mathcal M_n(G)$ stands for 

\

\centerline{{\bf birational} vs. {\bf motivic/modular.}} 

\

This paper consists of two parts: in Part 1, we present proofs of Theorems~\ref{thm:main} and 
\ref{thm:main2}. We recast the definition of $\mathcal M_n(G)$ in terms of scissor type relations on lattices with cones. 
We introduce a certain quotient $\mathcal M_n^-(G)$ of $\mathcal M_n(G)$ and define
multiplication and co-multiplication on this group. We formulate a series of conjectures reducing the structure of $\mathcal M_n(G)\otimes \bQ$ 
to certain primitive pieces. 
We define Hecke operators on $\mathcal M_n(G)$, which are compatible with the hypothetical decomposition.

In Part 2, we introduce various generalizations of 
$\mathcal B_n(G)$ and  $\mathcal M_n(G)$, not necessarily related to each other, reflecting a certain divergence of
birational and automorphic sides. Our considerations led us to a new question (see Question~\ref{ques:maybe} in Section ~\ref{sect:alge}), and a potentially new viewpoint on the Langlands program, based 
on higher-dimensional generalizations of modular symbols.  
We  identify  $\mathcal M_n^-(G)$  with cohomology of an arithmetic group, with coefficients in a 1-dimensional representation. 
We also explore, in the case $n=2$, the relation between our groups of symbols and classical Manin symbols for modular forms of weight 2.

During the preparation of this paper we discovered the work of Borisov-Gunnels \cite{borisov}, 
who studied constructions related to the modular picture in the case $n=2$ and raised the question of generalizations
to $n\ge 3$ in \cite[Remark 7.15]{borisov-higher}.

In the last section, we present results of computer experiments with equations for new invariants.

\

\noindent
{\bf Acknowledgments:}  
The research of V.P.  received funding from the European Research Council (ERC) under the European Union Horizon 2020 research and innovation program (QUASIFT grant agreement 677368). The research of Y.T. received support from  NSF grant 1601912.

We are grateful to Alex Barnett and Nick Carriero (Flatiron Institute, Simons Foundation) for their help with computer experiments, and to 
Avner Ash and 
Alexander Goncharov for their interest and helpful comments.

\centerline{\bf Part 1}

 \section{Invariance under blowups}
 \label{sect:relation}
 
We use notation and conventions from the Introduction. Let $X$ be  a smooth irreducible projective $n$-dimensional variety equipped with generically free regular action of  a finite abelian group $G$, and $W\subset X$ a closed smooth irreducible $G$-stable subvariety, 
 $$
0\le  \dim(W) \le n-2.
$$
Let 
$$
\pi: \tilde{X} = \Bl_W(X)\ra X
$$
be the blowup of $X$ in $W$. By the $G$-equivariant Weak Factorization theorem, smooth projective $G$-birational models of $X$ are connected by 
iterated blowups of such type.

In order to prove Theorem \ref{thm:main}, it suffices to show that 
$$
\beta(\tilde{X}) = \beta(X) \in \mathcal B_n(G).
$$  
Choose an irreducible component $Z\subset W^G$. 
It suffices to consider the structure of the fixed locus of exceptional divisors in 
the neighborhood of $Z$. 
Let 
$$
F=F(Z)\subseteq X^G
$$ 
be the unique irreducible component containing $Z$, it equals one of 
the $F_{\alpha}$ in \eqref{eqn:dec}. Let $z\in Z$ be a point and 
 $$
 \mathcal T_z X = T_1\oplus T_2\oplus R_1\oplus R_2
 $$
 the decomposition of the tangent bundle at $z$, where $T_i$ stand for trivial representations, 
 and $R_1$, $R_2$ have only nontrivial characters, with 
 $$
 \mathcal T_z X^{G} = T_zF =T_1\oplus T_2, \quad \mathcal T_z W = T_2\oplus R_1.
 $$
 Let 
 $$
 d_1:=\dim(T_1), \quad d_2 = \dim(T_2), \quad d_3= \dim(R_1), \quad d_4=\dim(R_2).
 $$
 The spectrum of the action of $G$ in $\mathcal T_z$ takes the form
$$
 \underbrace{0,\ldots, 0}_{d_1}\mid \underbrace{0,\ldots, 0}_{d_2} \mid  b_1,\ldots, b_{d_3} \mid  
 \underbrace{a^1,\ldots, a^1}_{\kappa_1}, \ldots, \underbrace{a^m,\ldots, a^m}_{\kappa_m},
 $$
 where $b_j\in A\setminus 0$, and $a^1, \ldots , a^m\in A\setminus 0$, pairwise distinct, with 
 $$
 \kappa_1+ \cdots + \kappa_m = d_4, \quad \kappa_i\ge 1, m\ge 0. 
 $$
 We have
 \begin{itemize}
 \item $d_2=\dim(Z)$,
 \item $d_1+d_2+d_3+d_4 =n$,
 \item $1\le d_3+d_4$, since $\codim(X^G)\ge 1$,
 \item $2\le d_1+d_4$, since $\codim(W)\ge 2$.
  \end{itemize}
 We consider cases, with corresponding 
 geometric configurations:
 \begin{itemize}
  \item[(I)]
  $d_1=0, d_4\ge 2$,
  geometrically, this means that $W$ contains a component of $X^G$. Blowing up $W$ we obtain new 
  contributions to formula \eqref{eqn:main}. 
The new fixed locus, with $m$ irreducible components, 
consists of subvarieties of the exceptional divisor, a projective bundle over $W$. These subvarieties, in turn, are total spaces of projective bundles over $Z$, with fibers 
  $$
  \bP^{\kappa_i-1}, \quad i=1, \ldots, m.
  $$ 
  The corresponding contribution to $\beta(\tilde{X})$ is given by
  $$
  \sum_{i=1}^m [\underbrace{0,\ldots}_{d_2}, b_1,\ldots, b_{d_3} , \underbrace{a^1-a^i, \ldots }_{\kappa_1}, \ldots, a_i,
  \underbrace{0, \ldots}_{\kappa_{i}-1}, \ldots, 
  \underbrace{a^m-a^i, \ldots}_{\kappa_m}].
  $$
 Putting 
 $$
 a_1,\ldots, a_k = \underbrace{a^1, \ldots}_{\kappa_1}, \ldots,
  \underbrace{a^m, \ldots }_{\kappa_m}
  $$
  and 
  $$
  b_1,\ldots, b_{n-k} = b_1, \ldots, b_{d_3}, \underbrace{0, \ldots}_{d_2}
  $$
we find that the formula matches relation (B), in the case when the sequence $\bar{a}=a_1,\ldots, a_k$ does not contain zeros. 
  \item[(II)]
  $d_1, d_4\ge 1$, 
  geometrically, this means that the tangent spaces of the fixed locus and $W$ do not span the whole tangent space and, near $Z$,
 the component $F$ is not contained in $W$. In the blowup, we will have a component of the fixed locus which is birational to $F$ and new components which are projective bundles 
$$
\bP^{\kappa_1-1}, \ldots,  \bP^{\kappa_m-1}
$$
over $Z$. We need to show that the contribution of these $m$ terms vanishes in $\mathcal B_n(G)$.  
The new components contribute
  $$
  \sum_{i=1}^m [\underbrace{-a^i,\ldots}_{d_1}, \underbrace{a^1-a^i, \ldots }_{\kappa_1}, \ldots, a_i,
  \underbrace{0, \ldots}_{\kappa_{i}-1}, \ldots, 
  \underbrace{a^m-a^i, \ldots}_{\kappa_m}, \bar{b}].
$$
We claim that this sum vanishes in $\mathcal B_n(G)$. Indeed, consider relation (B) for the sequences 
$$
\bar{a} =a_1,\ldots, a_k = \underbrace{0, \ldots}_{d_1}, \ldots,
  \underbrace{a^1, \ldots }_{\kappa_1}, \ldots,  \underbrace{a^m, \ldots }_{\kappa_m},
  $$
  and, as before, 
  $$
\bar{b} =   b_1,\ldots, b_{n-k} = b_1, \ldots, b_{d_3}, \underbrace{0, \ldots}_{d_2}.
  $$
The left side of (B) equals
$$[\bar{a},\bar{b}]=
[a_1,\ldots, a_k, \bar{b}] = [\underbrace{0, \ldots}_{d_1}, 
  \underbrace{a^1, \ldots }_{\kappa_1}, \ldots,  \underbrace{a^m, \ldots }_{\kappa_m}, \bar{b}]
$$
The right side is the sum of $(m+1)$ terms. The first summand, corresponding to $a_i=a_1=0$ coincides with the left side. 
The remaining terms are the same as above. 
 
\item[(III)]
  $d_1\ge 2,d_3\ge 1, d_4=0$, in this case, $F$ is not contained in $W$, no new contributors to formula \eqref{eqn:main} arise. 
\end{itemize}
 
 This concludes the proof of Theorem~\ref{thm:main}.

\section{Comparison}
 \label{sect:comp}

In this section we study the map
\begin{equation}
\label{eqn:mu12}
\mu: \mathcal B_n(G)\ra \mathcal M_n(G)
\end{equation} 
defined in Section~\ref{sect:intro}. The proof that this is a well-defined homomorphism is a long chain
of essentially trivial steps. 

First we record several corollaries of defining relations for $\mathcal M_n(G)$:
\begin{enumerate}
\item[(1)] $\langle 0,0, \ldots\rangle = 0$,
\item[(2)] $\langle a,a, \ldots\rangle = 2 \langle a,0, \ldots\rangle$,
\item[(3)] $\langle a,a,0, \ldots\rangle = 0$,
\item[(4)] $\langle a,a,a',a', \ldots\rangle = 0$,
\item[(5)]  $\langle a,a,a, \ldots\rangle = 0$,
\item[(6)]  $\langle a,-a,\ldots\rangle = 0$,
\end{enumerate}
here $\ldots$ stands for arbitrary sequences of elements in $A$, such that the set of all elements of the symbol spans 
the whole $A$.

In the proofs below we freely use the symmetry relation (S). 
\begin{itemize}
\item[(1)] 
We use (M) for $k=2$ and $a_1=a_2=0$:
$$
\langle 0,0, \ldots\rangle  = \langle 0, 0, \ldots \rangle  + \langle 0, 0, \ldots \rangle.
$$
\item[(2)] 
We use (M) for $k=2, a_1=a_2=a$.
\item[(3)] 
We use (2) and (1):
$$
\langle a,a,0, \ldots\rangle  \stackrel{(2)}{=}  2\langle a, 0, 0, \ldots \rangle  + \langle 0, 0, \ldots \rangle \stackrel{(1)}{=}0. 
$$
\item[(4)]
We   use again (2) and (1):
$$
\langle a,a,a',a', \ldots\rangle  \stackrel{(2)}{=} 4 \langle a, 0, a',0, \ldots \rangle  \stackrel{(1)}{=} 0.
$$
\item[(5)] We use (M) for $k=3$ and $a_1=a_2=a_3=a$, and then (1):
$$
\langle a,a,a, \ldots\rangle = 3  \langle a, 0, 0, \ldots \rangle \stackrel{(1)}{=} 0,
$$
\item[(6)] 
We use (M) for $k=2, a_1=a, a_2=0$:
$$
\langle a,0, \ldots\rangle = \langle a,-a,\ldots\rangle + \langle a,0 \ldots\rangle.
$$
\end{itemize}

We proceed to the proof of Theorem~\ref{thm:main2}. The main point is to check the following compatibility equation
\begin{equation}
\label{eqn:mu1}
\mu([a_1,\ldots, a_k, b_1,\ldots, b_{n-k}]) = 
$$
$$
\sum_{i,  a_i\neq a_{i'}, \text{ for } i<i'} \mu([ a_1-a_i, \ldots, a_i, \ldots, a_k-a_i, b_1, \ldots, b_{n-k}]).
\
\end{equation}
For convenience, we sometimes write
$$
[a_1,\ldots, a_k \mid  b_1,\ldots, b_{n-k}] = [a_1,\ldots, a_k, b_1,\ldots, b_{n-k}] \in \mathcal B_n(G),
$$
and similarly, for the symbol in $\mathcal M_n(G)$,  
indicating the position of the separation of $a$ and $b$ variables in subsequent relations. 

There are three cases, distinguished by the number of zeros in the sequence
$$
\bar{b}:= b_1,\ldots, b_{n-k},
$$
\begin{itemize}
\item[(C0)] $\bar{b}$ does not contain zeros. 
\item[(C1)] $\bar{b}$ contains exactly one zero.  
\item[(C2)] $\bar{b}$ contains at least two zeros. 
\end{itemize}

The case (C2) is obvious, by relation (1), since all terms vanish, by the definition $(\bm\mu_2)$ (in Section~\ref{sect:intro}).

The case (C1) splits into subcases
\begin{itemize}
\item[(C10)]
The sequence 
$$
\bar{a}:=a_1,\ldots, a_k
$$
contains no zeros,
\item[(C11)]
$\bar{a}$ contains at least one zero. 
\end{itemize}
In the case (C11), the left hand side maps to 0, by $(\bm\mu_2)$:
$$
\mu([0, \ldots \mid 0,\ldots]) = 0.
$$
The terms of the right hand side in the relation (B) are of two types, corresponding to $a_i=0$ or $a_i=a\neq 0$. 
If $a_i=0$, then the term has the form
$$
[\underline{0}, \ldots \mid 0, \ldots ], 
$$
mapping to zero, by $(\bm\mu_2)$.  The underlined $0$ indicates that $a_i$ is left in its place, in the relation (B). 
If $a_i=a\neq 0$, then the corresponding term in the right hand side of (B) has the form
$$
[-a, \cdots, \underline{a}, \ldots \mid 0, \ldots ], 
$$
mapping to 
$$
c\cdot \langle -a,\ldots, a, \ldots  0, \ldots \rangle, 
$$
where $c=0$ or $2$, and the symbol in $\mathcal M_n(G)$ equals 0, by (6). 

The case (C10) splits into two cases:
\begin{itemize}
\item[(C10$\neq$)]
all terms in $\bar{a}$ are pairwise distinct,
\item[(C10$=$)]
there exists at least two equal terms in $\bar{a}$. 
\end{itemize}
In case (C10$\neq$), in the left and in the right hand side of the relation (B), all symbols contain exactly one zero. 
Thus, they are mapped to similar symbols in $\mathcal M_n(G)$, but multiplied by 2, by $(\bm\mu_1)$. 
Since every element in $\bar{a}$ occurs only once, the expressions on the right side of (B) and (M) consist of matching terms. 

In case (C10$=$), the left hand side of (B) equals
$$
[a,a, \ldots \mid 0, \ldots ] \in \mathcal B_n(G).
$$
Its image under $\mu$ equals
$$
2 \langle a,a, \ldots, 0, \ldots \rangle \in \mathcal M_n(G),
$$
which vanishes, by (3). We claim that all terms on the right side of (B) map to zero as well. Indeed, they are
either of the form
$$
[ \underline{a}, 0, \ldots \mid 0, \ldots ]
\quad \text{ or } \quad 
[a-a', a-a', \ldots,  \underline{a}', \ldots \mid 0, \ldots ], \quad a'\neq a.
$$
The image of this symbol is proportional to 
$$
\langle a, 0, \ldots, 0, \ldots \rangle \quad \text{ or} \quad \langle a-a', a-a', \ldots, a', \ldots, 0, \ldots \rangle,
$$
vanishing by (1) or (3), respectively. 

The case (C0) splits into three cases:
\begin{itemize}
\item[(C00)] $\bar{a}$ does not contain zeros,
\item[(C01)] $\bar{a}$ contains exactly one zero,
\item[(C02)] $\bar{a}$ contains at least two zeros.
\end{itemize}
Recall that $\bar{b}$ does not contain zeros, in case (C0). 
We start with (C02). The left hand side in (B) has the form
$$
[0,0, \ldots \mid \ldots ],
$$
hence maps to 0, by $(\bm\mu_2)$. We check that all terms on the right hand side of (B) map to 0 as well. These symbols
have the form
$$
[\underline{0}, 0, \ldots \mid \ldots ] \quad \text{ or } \quad [-a, -a, \ldots, \underline{a}, \ldots \mid \ldots ], \quad a\neq 0, 
$$
mapping to elements in $\mathcal M_n(G)$ which are proportional to either
$$
\langle 0, 0, \ldots \rangle \quad \text{ or } \quad \langle -a, -a, \ldots, a, \dots \rangle, 
$$
vanishing by (1) or (6), respectively. 

The case (C01) splits into two cases: 
\begin{itemize}
\item[(C01$\neq$)]
all terms in $\bar{a}$ are pairwise distinct,
\item[(C01$=$)]
there exists at least two equal terms in $\bar{a}$. 
\end{itemize}
In case (C01$=$), the left side in (B) has the form
$$
[0, a,a, \ldots \mid \ldots ], \quad \text{ for } a\neq 0,
$$
mapping to 0, by relation (3). The right side contains terms of the form
$$
[ \underline{0}, a,a, \ldots \mid \ldots ]
\quad \text{ or } \quad 
[-a,\underline{a}, 0,  \ldots \mid \ldots ],
$$
or 
$$
[-a',a-a', a-a', \ldots, \underline{a}', \ldots  \mid \ldots ], 
\quad a'\neq a,0.
$$
Their images under $\mu$ are proportional to 
$$
\langle 0, a,a, \ldots \rangle, \quad \text{ or } \quad \langle -a, -a,0, \ldots \rangle, 
$$
or 
$$
\langle -a', a-a',a-a',\ldots, a', \ldots \rangle, 
$$
which vanish by (3), (6), and (6), respectively. 

Consider the case (C01$\neq$). The left side of (B) has the form
$$
[0, a_2, \ldots, a_k  \mid \ldots ], \quad \text{ for } a_i\neq 0, i\ge 2, \text{ pairwise distinct}, b_j\neq 0.
$$
Its image under $\mu$ equals, by $(\bm\mu_1)$, to 
$$
2 \langle 0, a_2, \ldots, a_k, \ldots \rangle.
$$
The right side of (B) is the sum
$$
[\underline{0}, a_2,  \ldots , a_k \mid \ldots ] + [-a_2,\underline{a_2}, \ldots,  a_k-a_2 \mid \ldots ] + 
[-a_3, a_2-a_3,\underline{a_3}, \ldots \mid \ldots ] + \cdots
$$
 where the first summand maps,  by $(\bm\mu_1)$, to 
 $$
 2 \langle 0, a_2, \ldots, a_k, \ldots \rangle
 $$
 and all the other terms map to 0, by relation (6). This proves (C01$\neq$).

We are left with the case (C00), i.e., all elements of the sequences $\bar{a}$ and $\bar{b}$ are nonzero. 
We have two cases:
\begin{itemize}
\item[(C00$\neq$)] 
all terms in $\bar{a}$ are pairwise distinct,
\item[(C00$=$)] at least two terms in $\bar{a}$ are equal. 
\end{itemize}
In case (C00$\neq$), the left and the right side of (B) do not contain symbols with zeroes, hence we use $(\bm\mu_0)$ and 
the relation (B) is mapped precisely to the corresponding relation (M).  

The case (C00$=$) splits into three subcases:
\begin{itemize}
\item[(C00$=2$)] $\bar{a}$ has only one pair of equal terms, i.e., 
$$
\bar{a} = a,a,a_3,\ldots, a_k, 
$$
where $a_3,\ldots, a_k $ are pairwise 
distinct and different from $a$,
 \item[(C00$=2,2$)] $\bar{a}$ has the form
 $$
 \bar{a} = a,a,a',a', a_5,\ldots, a_k, 
$$
where $a\neq a'$ and $a_5,\ldots, a_k $ are pairwise distinct and different from $a,a'$,
 \item[(C00$=3$)] $\bar{a}$ has the form
 $$
 \bar{a} =  a,a,a,\ldots 
 $$
 \end{itemize}
We start with (C00$=3$). The left side is mapped to 0, by relation (5). The right side has terms of the form
$$
[ \underline{a}, 0,0, \ldots \mid \ldots ]
\quad \text{ or } \quad 
[a-a',a-a',a-a', \ldots, \underline{a}', \ldots \mid \ldots ], \quad a\neq a'.
$$
They are mapped to terms proportional to 
$$
\langle a, 0, 0, \ldots \rangle \quad \text{ or} \quad \langle a-a', a-a',a-a', \ldots \rangle,
$$
vanishing by (1) or (5), respectively. 

We consider (C00$=2,2$). The left side is mapped to 
$$
\langle a,a,a',a' , \ldots\rangle
$$
which vanishes by relation (4). The right side has terms of three shapes
$$
[a,0,a'-a,a'-a, \ldots \mid \ldots ] 
\quad \text{ or } \quad 
[a-a',a-a',\underline{a}', 0, \ldots \mid \ldots ],  \quad a\neq a'.
$$
or 
$$
[a-a'',a-a'',a'-a'', a'-a'',\ldots, \underline{a}'', \ldots \mid \ldots ], \quad a, a', a'' \text{ pairwise distinct}. 
$$
Their images are proportional to 
$$
\langle a,0,a'-a,a'-a, \ldots\rangle
\quad \text{ or } \quad 
\langle a-a',a-a',\underline{a}', 0, \ldots \rangle,  \quad a\neq a'.
$$
or 
$$
\langle a-a'',a-a'',a'-a'', a'-a'',\ldots, \underline{a}'', \ldots\rangle, \quad a, a', a'' \text{ pairwise distinct},
$$
which vanish by (3), (3), and (4), respectively. 

In the last case (C00$=2$), relation (B) has the form
$$
[a,a,a_3,\ldots, a_k \mid \ldots ]  = 
[\underline{a},0,a_3-a,\ldots, a_k-a \mid \ldots ] +
$$
$$
+ [a-a_3,a-a_3,\underline{a}_3,\ldots, a_k-a_3 \mid \ldots ] + [a-a_4,a-a_4,a_3-a_4, \underline{a}_4,\ldots \mid \ldots ] + \cdots
$$
The left side maps to 
 $$
 \langle a,a,a_3,\ldots \rangle
 $$
 and the right side to 
 $$
 2  \langle \underline{a},0,a_3-a,\ldots, a_k-a \mid \ldots \rangle + \langle a-a_3,a-a_3,\underline{a}_3,\ldots, a_k-a_3\mid \ldots \rangle + \cdots.
 $$
 Here the first summand is obtained by $(\bm\mu_1)$ and the other summands by $(\bm\mu_0)$. 
 We see that, modulo relation (S), the image of the right hand side of (B) coincides with the right hand side of the relation (M) 
 in $\mathcal M_n(G)$. 
 
 This concludes the proof of Theorem~\ref{thm:main2}.

\begin{prop} \label{prop:n2comparison}
The homomorphism 
\begin{equation}
\label{eqn:mu2}
\mu: \mathcal B_2(G)\ra \mathcal M_2(G)
\end{equation} 
is injective, with 
cokernel isomorphic to $(\bZ/2\bZ)^{\phi(N)}$, if $G\simeq \bZ/N\bZ$ is a cyclic group, 
and is an isomorphism otherwise.
\end{prop}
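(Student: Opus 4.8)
The plan is to exhibit both $\mathcal B_2(G)$ and $\mathcal M_2(G)$ as quotients of one and the same group, to isolate the entire discrepancy between them in an explicit free abelian direct summand, and then to read off injectivity and the cokernel by elementary linear algebra over $\bZ$. (We assume $G\ne 1$; for the trivial group $\mathcal M_2(G)=0$ and the statement must be modified.)

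If $G$ is not cyclic then $A$ is not cyclic either, so there is no $a\in A$ with $\bZ a=A$; hence no admissible symbol has a zero or a repeated entry, relations (B) and (M) coincide — they differ only in the diagonal case $a_1=a_2$, which cannot occur — and $\mu$ is the identity. So assume $G\simeq\bZ/N\bZ$ with $N\ge 2$; then $A\simeq\bZ/N\bZ$ and $A$ has exactly $\phi(N)$ generators, all nonzero. Let $P$ be the set of unordered pairs $\{a_1,a_2\}$ with $\bZ a_1+\bZ a_2=A$, let $\bZ[P]$ be the free abelian group on $P$, and put $\mathcal B_2':=\bZ[P]/R$, where $R$ is spanned by the two-term relations $\{a_1,a_2\}-\{a_1,a_2-a_1\}-\{a_1-a_2,a_2\}$ with $a_1\ne a_2$. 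Writing $e_a:=\{a,0\}$ for $a$ a generator of $A$ and $n_a:=\{a,a\}$ (a symbol with both entries nonzero), the instances $k=2$ of (B) and (M) yield the presentations
\begin{equation*}
\mathcal B_2(G)=\mathcal B_2'\big/\langle\, n_a-e_a\,\rangle,\qquad
\mathcal M_2(G)=\mathcal B_2'\big/\langle\, n_a-2e_a\,\rangle
\end{equation*}
(with $a$ ranging over the generators of $A$), and by $(\bm\mu_0)$ and $(\bm\mu_1)$ the map $\mu$ is induced by $[a_1,a_2]\mapsto\langle a_1,a_2\rangle$ on symbols with no zero entry and by $[a,0]=[a,a]\mapsto 2\langle a,0\rangle=\langle a,a\rangle$.

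The one substantive step is the direct sum decomposition
\begin{equation*}
\mathcal B_2'=N\oplus F,\qquad F:=\bigoplus_{a}\bZ\,e_a,\qquad N:=\langle\,\{a_1,a_2\}\ :\ a_1,a_2\ne 0\,\rangle\subseteq\mathcal B_2',
\end{equation*}
the first sum running over generators $a$ of $A$, with $F$ free of rank $\phi(N)$ and $N\cap F=0$. To prove it I would construct the auxiliary homomorphism $\theta\colon\mathcal B_2'\to\bigoplus_a\bZ$ sending $\{a,0\}\mapsto e_a$ and every symbol with both entries nonzero to $0$. Well-definedness comes down to the observation that in each generating relation of $R$ the symbol $\{b,0\}$ occurs with total coefficient zero: it appears only in the relations attached to the ordered pairs $(b,0)$ and $(0,b)$, in each of which it sits once on the left and once on the right and so cancels, every other term of those relations having both entries nonzero. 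Then $\theta$ vanishes on $N$, is injective on $F$ (it sends the generating set $\{e_a\}$ to a basis), and $\mathcal B_2'=N+F$ because these symbols exhaust the generators; the decomposition follows.

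Granting the decomposition, the remainder is formal. Since $n_a\in N$, the inclusion $N\hookrightarrow\mathcal B_2'\to\mathcal B_2(G)$ and the homomorphism $\mathcal B_2(G)\to N$ induced by $(g,\sum c_a e_a)\mapsto g+\sum c_a n_a$ are mutually inverse, so $\mathcal B_2(G)\cong N$, with $[a_1,a_2]\mapsto\{a_1,a_2\}$ and $[a,0]\mapsto n_a$. Likewise $N\cap\langle\, n_a-2e_a\,\rangle=0$ since $F$ is torsion free, so $N$ injects into $\mathcal M_2(G)$, and
\begin{equation*}
\mathcal M_2(G)/N=\mathcal B_2'\big/\big(N+\langle\, n_a-2e_a\,\rangle\big)=(N\oplus F)\big/(N\oplus 2F)=F/2F\cong(\bZ/2\bZ)^{\phi(N)}.
\end{equation*}
Comparing the two descriptions of $\mu$ on the generators $[a_1,a_2]$ and $[a,0]$ shows that $\mu$ is the composite $\mathcal B_2(G)\cong N\hookrightarrow\mathcal M_2(G)$ (on $[a,0]$: $n_a\mapsto\langle a,a\rangle=2\langle a,0\rangle$, as required by $(\bm\mu_1)$). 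Hence $\mu$ is injective with cokernel $\mathcal M_2(G)/N\cong(\bZ/2\bZ)^{\phi(N)}$, which, together with the non-cyclic case, gives the proposition. The main obstacle is precisely the splitting $\mathcal B_2'=N\oplus F$ — once the ``boundary'' symbols $\{a,0\}$ are seen to span a free summand, injectivity and the cokernel drop out by quotienting by it; the only genuine verification is that $\theta$ kills $R$, a short explicit check like those in the proof of Theorem~\ref{thm:main2}.
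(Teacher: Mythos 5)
Your proof is correct and takes essentially the same approach as the paper: both arguments isolate the entire discrepancy in the degenerate symbols $[a,0]$ (present only when $G$ is cyclic), observe that the nondegenerate part is common to $\mathcal B_2(G)$ and $\mathcal M_2(G)$, and note that the sole differing relation is $[a,a]=[a,0]$ versus $\langle a,a\rangle = 2\langle a,0\rangle$. Where the paper asserts as ``obvious'' that adjoining a generator $e$ with $2e=h$ to an abelian group gives an injection with quotient $\bZ/2\bZ$, you make this concrete by exhibiting the splitting $\mathcal B_2'=N\oplus F$ via the retraction $\theta$ --- a more explicit but conceptually identical packaging.
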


\begin{proof}
One can write the generators and relations for $\mathcal B_2(G)$ and $ \mathcal M_2(G)$ as follows:
\begin{itemize}
\item{\bf Generators}: 
\begin{itemize}
\item
(``non-degenerate") symbols $[a_1,a_2]$ (resp.,  $\langle a_1,a_2\rangle$),  where $a_1,a_2 \in A\setminus 0$ are such that  $\bZ a_1+\bZ a_2=A$, and 
\item 
(``degenerate") symbols $[a,0]$ (resp., $\langle a,0\rangle$), where
 $a\in A\setminus 0$ is such that $\bZ a=A$,
 \end{itemize}
 \item{\bf Relations:}
 \begin{itemize}
\item[({\bf 1})] 
$[a_1,a_2]=[a_2,a_1]$ (resp. $\langle a_1, a_2\rangle=\langle a_2,a_1\rangle$) for $a_1,a_2\in A\setminus 0$,
\item[({\bf 2})] 
$[a_1,a_2]=[a_1,a_2-a_1]+[a_1-a_2,a_2]$ (and correspondingly $\langle a_1,a_2\rangle=\langle a_1,a_2-a_1\rangle +\langle a_1-a_2,a_2\rangle$) for  $a_1,a_2\in A\setminus 0$
and $a_1\ne a_2$,
\item[({\bf 3})] 
$[a,a]=[a,0]$ (resp. $\langle a,a\rangle=2\langle a,0\rangle$) for $a\ne 0$.
\end{itemize}
\end{itemize}
We see that the first two relations are identical and deal only with non-degenerate symbols $[a_1,a_2]$ (resp., $\langle a_1,a_2\rangle$), when both $a_1, a_2$ are nonzero. 
In the case $ \mathcal B_2(G)$, relation (3) just identifies the degenerate symbol
$[a,0]$ via the nondegenerate symbol $[a,a]$, whereas in the case of  $\mathcal M_2(G)$ it adds one half of the nondegenerate symbol $\langle a,a\rangle$. Obviously, if we add to any abelian group an extra generator which is one half of any given element of this group, then the new group contains the initial one, and the quotient is  $\bZ/2\bZ$.
The statement of the Proposition immediately follows from these considerations, as the Euler function $\phi(N)$ is the number of degenerate elements $[a,0]$ in the case $G\simeq A\simeq \bZ/N\bZ$.
\end{proof}

 \begin{conj}
 \label{conj:test}
For $n\ge 3$ the homomorphism
 \begin{equation}
\label{eqn:mu11}
\mu: \mathcal B_n(G)\ra \mathcal M_n(G)
\end{equation} 
 is an isomorphism, modulo torsion. 
 \end{conj}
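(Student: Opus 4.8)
\noindent\textbf{Towards a proof of Conjecture~\ref{conj:test}.}
Since $\mu$ is surjective modulo $2$-torsion by Theorem~\ref{thm:main2}, the conjecture is equivalent to the injectivity of $\mu\otimes\bQ$, i.e.\ to the equality of $\bQ$-ranks of $\mathcal B_n(G)$ and $\mathcal M_n(G)$. The plan is to construct a $\bQ$-linear splitting
$$
\nu\colon \mathcal M_n(G)\otimes\bQ \longrightarrow \mathcal B_n(G)\otimes\bQ, \qquad \nu\circ(\mu\otimes\bQ)=\mathrm{id};
$$
combined with Theorem~\ref{thm:main2}, such a $\nu$ forces $\mu\otimes\bQ$ to be an isomorphism.

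The evident candidate reverses $\mu$ on generators: send $\langle a_1,\dots,a_n\rangle$ with no zero entry to $[a_1,\dots,a_n]$, a symbol with exactly one zero entry to $\tfrac12$ of the corresponding bracket symbol, and a symbol with at least two zero entries to $0$ (consistent with the vanishing of such symbols in $\mathcal M_n(G)$). Then $\nu\circ\mu=\mathrm{id}$ holds on the no-zero and one-zero generators by inspection, while on generators with at least two zeros it needs $[0,0,\dots]=0$ in $\mathcal B_n(G)\otimes\bQ$. Well-definedness of $\nu$ amounts to checking that it annihilates relations (S) and (M); the former is clear, and for the latter we may, by the reduction to $k=2$ from Section~\ref{sect:hecke}, restrict to the three-term relations $\langle a_1,a_2,\bar b\rangle=\langle a_1,a_2-a_1,\bar b\rangle+\langle a_1-a_2,a_2,\bar b\rangle$ (which include the degenerate case $a_1=a_2$). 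Running through the cases according to which of $a_1$, $a_2$, $a_1-a_2$, and the entries of $\bar b$ vanish, and mirroring the case analysis in the proof of Theorem~\ref{thm:main2}, one reduces to proving in $\mathcal B_n(G)\otimes\bQ$ the bracket analogues of the auxiliary identities (1)--(6) of Section~\ref{sect:comp}, most importantly $[0,0,\dots]=0$ and $[a,-a,\dots]=0$.

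The crux is to establish these $\mathcal B_n(G)$-identities without circularity: they follow \emph{a posteriori} from the conjecture, but they are not evident consequences of relation (B), and already for $n=2$ the vanishing of $[a,-a]$ is obtained in Proposition~\ref{prop:n2comparison} only through the injectivity of $\mu$. I see two possible routes. The first is a direct combinatorial derivation of the required identities from (B), which is plausible since, as noted in Remark~\ref{rema:comment}, the system (B) is highly overdetermined for $n\ge 3$; the derivation of $[-a,-a,a]=0$ in $\mathcal B_3(G)$ from the instance of (B) with $k=n=3$ and $\bar a=(0,0,a)$ illustrates the flavour of what is needed. The second, conceptually cleaner, route is geometric: represent the two sides of a putative identity by classes $\beta(X)$, $\beta(X')$ of smooth projective $G$-varieties linked by $G$-equivariant blowups and apply Theorem~\ref{thm:main}, the point being that a degenerate symbol with many zero entries corresponds to a high-dimensional component of $X^G$ that one expects to remove by an equivariant modification away from the generic point. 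Either route completes the construction of $\nu$. Alternatively, the whole comparison may be deducible from the conjectural decomposition of $\mathcal M_n(G)\otimes\bQ$ into primitive pieces together with a parallel decomposition of $\mathcal B_n(G)\otimes\bQ$ (cf.\ Conjecture~\ref{conj:order}), reducing the statement to groups of prime-power order and to the simplest fixed-point configurations---which is where I expect the genuine difficulty to lie.
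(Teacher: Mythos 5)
This is a \emph{conjecture} in the paper, not a theorem: the paper does not prove it. Immediately after the statement, the authors offer exactly the reduction you arrive at---namely, that the conjecture would follow if one knew that $[0,0,\ldots]$ vanishes modulo torsion in $\mathcal B_n(G)$, which they further reduce (Conjecture~\ref{conj:order}) to the single claim that $[0,0,1]\in\mathcal B_3(\bZ/N\bZ)$ is a torsion element. So your strategy of constructing the obvious splitting $\nu$ and isolating $[0,0,\ldots]=0$ as the crux is precisely the authors' reduction, and neither you nor they prove it. What you do not do, and what they also do not do, is actually establish that identity; both your ``routes'' (combinatorial derivation from (B), or a geometric argument via Theorem~\ref{thm:main}) are left as hopes, and this is where the genuine open problem lies.

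One concrete error in your discussion: you assert that $[a,-a,\ldots]=0$ is ``not an evident consequence of relation (B)'' and that for $n=2$ it is ``obtained only through the injectivity of $\mu$.'' That is wrong. Take (B) with $k=2$, $a_1=a\neq 0$, $a_2=0$, and arbitrary $\bar b$ (subject to the generation constraint). The index $i=1$ contributes $[a,-a,\bar b]$ unconditionally, and $i=2$ contributes $[a,0,\bar b]$ since $0\neq a$; hence
$$
[a,0,\bar b]=[a,-a,\bar b]+[a,0,\bar b],
$$
so $[a,-a,\bar b]=0$ on the nose, with no torsion caveat and no appeal to $\mu$. The same computation works for every $n\ge 2$. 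Your $n=3$ example with $\bar a=(0,0,a)$ is a correct but weaker special case. Thus, once one removes this red herring, the entire content of the conjecture really does concentrate in the single identity $[0,0,\ldots]=0\pmod{\text{torsion}}$, as the authors say---so your instinct about where the difficulty lies is right, but your obstacle list was longer than it needs to be.
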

 
 This statement reduces to the following: 
 For any integer $N\ge 2$,
  $$
 [0,0,1]\in \mathcal B_3(\bZ/N\bZ)
 $$
 is a torsion element. Indeed, if this were the case, then any symbol $[0,0,\ldots]$ would vanish modulo torsion, and then one could repeat all the steps in the proof of Theorem~\ref{thm:main2} and construct an inverse morphism from  $ \mathcal M_n(G)\otimes \bQ$ to  $ \mathcal B_n(G)\otimes\bQ$.

\
  
Computer experiments for $N\le 23$ support the following:
\begin{conj}
 \label{conj:order}
For $N\ge 2$, the element
$$
 [0,0,1]\in \mathcal B_3(\bZ/N\bZ)
 $$
has order 1, i.e., $[0,0,1]=0\in  \mathcal B_3(\bZ/N\bZ)$, if $N$ is composite or $N=2,3,5$, 
and order exactly equal to 
$$
\frac{p^2-1}{24}, \quad \text{  if } N=p\ge 7 \quad \text{ is a prime.}
$$
 \end{conj}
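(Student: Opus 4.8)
\medskip
\noindent\emph{Sketch of a possible proof.} The plan is to split the assertion into an upper bound -- the order of $[0,0,1]$ divides $\tfrac{p^2-1}{24}$, read as $1$ when $N$ is composite or $N\in\{2,3,5\}$ -- and a lower bound, and to transport the whole question from $\mathcal B_3$ down to $\mathcal B_2$, where the link to modular symbols is explicit. By the reduction recorded after Conjecture~\ref{conj:test} it suffices to determine the order of $[0,0,1]\in\mathcal B_3(\bZ/N\bZ)$. First I would collect the elementary consequences of (S) and (B): specializing (B) with $k=3$ to $(u,u,u)$ and with $k=2$ to $(u,u\mid 0)$ gives $[u,u,u]=[u,0,0]=[0,0,u]$ for every unit $u$, so the subgroup of $\mathcal B_3(\bZ/N\bZ)$ generated by the two-zero symbols is cyclic, generated by $[0,0,1]$. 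Next, the assignment $[x,y]\mapsto[x,y,0]$ defines a homomorphism
$$
\iota\colon \mathcal B_2(\bZ/N\bZ)\longrightarrow \mathcal B_3(\bZ/N\bZ),
$$
its well-definedness being immediate because each of the three relations in Example~\ref{exam:1} is the case $n=3$, $k=2$, $b_1=0$ of (B). Since $[1,0]=[1,1]$ in $\mathcal B_2$ we get $[0,0,1]=\iota([1,0])$, so it remains to compute the order of $[1,0]$ in the image $\iota(\mathcal B_2(\bZ/N\bZ))$, equivalently the index $\bigl[\langle[1,0]\rangle:\langle[1,0]\rangle\cap\ker\iota\bigr]$ inside $\mathcal B_2(\bZ/N\bZ)$. (Note that $\mu([0,0,1])=0$ by $(\bm\mu_2)$, so this order measures part of $\ker(\mu\colon\mathcal B_3\to\mathcal M_3)$ and is not visible from $\mathcal M_3$ alone.)

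The next step is to control $\ker\iota$. One cannot expect $\iota$ to be injective -- already the relations of $\mathcal B_3(\bZ/4\bZ)$ must kill $\iota([1,0])$, whereas $[1,0]$ has infinite order in $\mathcal B_2(\bZ/4\bZ)$ -- so the kernel carries the entire discrepancy. I would feed into $\mathcal B_3$ exactly the relations invisible to $\mathcal B_2$, namely (B) with $k=3$ and (B) with $k=2$ and the zero moved into the $a$-block, and check that, modulo the relations of $\mathcal B_2$, these impose only constraints of the schematic shape $\iota(\xi)=0$ with $\xi\in\mathcal B_2(\bZ/N\bZ)$ an explicit combination of mediant relations (2). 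This yields a finite presentation of $\iota(\mathcal B_2(\bZ/N\bZ))$ in which the degenerate generator $[1,0]$ is torsion. The upper bound is then a matter of exhibiting, for $N=p\ge 7$, an explicit $\bZ$-linear combination of such relations equal to $\tfrac{p^2-1}{24}\cdot[1,0]$ -- a Dedekind--Bernoulli telescoping in the continued-fraction relation $[a,0,b]=[a,0,b-a]+[a-b,0,b]$ -- and of verifying, by direct computation for small $N$ and by a reduction to prime powers for general composite $N$, that the same combination collapses $\iota([1,0])$ to $0$ off the prime locus.

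For the lower bound -- where the content lies -- I would invoke the identification of $\mathcal B_2(\bZ/p\bZ)$ with a group of Manin-type symbols, in the spirit of Borisov--Gunnells \cite{borisov,borisov-higher} and of Section~\ref{sect:modular}: under it the classes $\iota([u,0])$ correspond to a single cuspidal divisor class on a modular curve of level $p$, whose exact order is a generalized Bernoulli number evaluating to $\tfrac{p^2-1}{24}$ -- the ``numerator of (number of cusps)$/12$'' phenomenon familiar from the Kubert--Lang and Mazur computations of cuspidal divisor class groups -- while for composite $N$ the corresponding Bernoulli quantity is trivial, consistent with order $1$.

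The principal obstacle is precisely this last matching: identifying the abstract group $\mathcal B_2(\bZ/p\bZ)$, together with the distinguished element $[1,0]$, with a concrete arithmetic object (a cuspidal divisor class group, or the Eisenstein part of $\rH^1$ of a congruence subgroup) in which the order is recognizably a Bernoulli number, and doing so functorially enough that the composite case comes out uniformly. The combinatorial bookkeeping for the upper bound is elementary but lengthy, and the precise description of $\ker\iota$ -- which, as the $N=4$ case shows, cannot be bypassed by an injectivity argument -- is the remaining secondary difficulty.
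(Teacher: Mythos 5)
The statement you are trying to prove is labeled a \emph{conjecture} in the paper, not a theorem. The authors offer no proof; immediately before it they write that ``computer experiments for $N\le 23$ support the following,'' and the preceding paragraph only records a reduction (showing $[0,0,1]$ is torsion would make the map $\mu$ an isomorphism modulo torsion). There is therefore no proof of theirs to compare against: a complete version of your argument would be settling an open problem, not reproducing a known one.

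Evaluating your sketch on its own terms, the sound part is the homomorphism $\iota\colon\mathcal B_2(\bZ/N\bZ)\to\mathcal B_3(\bZ/N\bZ)$, $[x,y]\mapsto[x,y,0]$: all three defining relations of $\mathcal B_2$ do become instances of (B) with $n=3$, $k=2$, $b_1=0$, and $\iota([1,0])=[1,0,0]=[0,0,1]$ by (S) and $[1,1]=[1,0]$. But the earlier assertion that ``the subgroup of $\mathcal B_3(\bZ/N\bZ)$ generated by the two-zero symbols is cyclic, generated by $[0,0,1]$'' does not follow from the relations you cite: $[u,u,u]=[u,0,0]=[0,0,u]$ shows each two-zero symbol is of this shape, not that $[0,0,u]$ is an integer multiple of $[0,0,1]$. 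The natural $(\bZ/N\bZ)^\times$-action shows these symbols all have the \emph{same} order, but cyclicity of the subgroup they span is unestablished. More importantly, both halves of your argument are left open in exactly the places that matter: for the upper bound you do not produce the claimed Dedekind--Bernoulli combination of relations killing $\tfrac{p^2-1}{24}\cdot[1,0,0]$ (nor verify that the continued-fraction telescoping on the two-variable relation $[a,0,b]=[a,0,b-a]+[a-b,0,b]$ ever reaches $[0,0,1]$ rather than staying in the one-zero sector), and for the lower bound you explicitly defer the identification of $\mathcal B_2(\bZ/p\bZ)$ with a cuspidal divisor class group and the Kubert--Lang-style order computation. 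Section~\ref{sect:modular} of the paper identifies $\mathcal M_2^-$ with weight-2 modular symbols, but it does not pin down $\mathcal B_2$ or the kernel of $\iota$; as you note yourself, these are precisely the two unfinished steps. As written, the proposal is a plausible strategy outline with acknowledged gaps, not a proof.
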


\section{On generators and relations in $\mathcal M_n(G)$}
\label{sect:gen-rel}

In this section, $G$ is a finite abelian group, with character group  $A=\Hom(G,\mathbb{C}^\times)$, and  $n\ge 2$ is an integer.  
 We give a geometric reformulation of generators and relations of $\mathcal M_n(G)$.

 We start with the following data:
 \begin{itemize}
 \item 
 a (torsion-free) lattice $\mathbf L\simeq \bZ^n$ of rank $n$,
 \item 
 an element $\chi\in \mathbf L \otimes A$ such that 
the induced homomorphism
$$
\mathbf L^\vee \ra A
$$
is a surjection,
\item 
a {\em basic simplicial cone}, i.e., a strictly convex cone 
$$
\Lambda\in \mathbf L_{\bR}
$$
spanned by a basis of $\mathbf L$. It is isomorphic to the standard octant $\bR^n_{\ge 0}$, for $\mathbf L=\bZ^n\subset \bR^n$. 
 \end{itemize}
 
For every equivalence class of triples
$$
(\mathbf L, \chi, \Lambda),
$$
up to isomorphism, we define a symbol
$$
\psi(\mathbf L, \chi, \Lambda) \in  \mathcal M_n(G)
$$
as follows: 
choose a basis $e_1,\ldots, e_n$ of $\mathbf L$, spanning $\Lambda$, and 
express 
\begin{equation}
\label{eqn:chi}
\chi=\sum_{i=1}^n 
e_i\otimes a_i,
\end{equation}
and put
$$
\psi(\mathbf L, \chi, \Lambda) = \langle  a_1,\ldots, a_n\rangle \in \mathcal M_n(G).
$$
The ambiguity in the choices is reflected in the action of the symmetric group $\mathfrak S_n$ on the basis elements, 
hence accounted for by condition (S). Relation (M) has the following geometric meaning: let 
$e_1,\ldots, e_n$ be an ordered basis of $\mathbf L$ spanning $\Lambda$:
\begin{equation}
\Lambda:= \bR_{\ge 0} e_1 +  \cdots + \bR_{\ge 0} e_n.
\end{equation}
 Fix an integer $2\le k\le n$. 
Then  
\begin{equation}
\label{eqn:lam}
\Lambda = \Lambda_1\cup \cdots \cup \Lambda_k,
\end{equation}
where 
$$
\Lambda_i:= \bR_{\ge 0} e_1 +  \cdots + \underbrace{\bR_{\ge 0} (e_1+\cdots e_k)}_{\text{$i$-th place}} + \cdots + \bR_{\ge 0} e_n,
$$
i.e., we are replacing the $i$-th generator $e_i$ by $(e_1+ \cdots e_k)$. 
The cones $\Lambda_i$ are also basic simplicial and their 
their interiors are disjoint. Decompose 
$$
\chi = e_1\otimes a_1 + \cdots + e_k\otimes a_k  + e_{k+1} \otimes b_{1} + \cdots + e_{n} \otimes b_{n-k} 
$$
as in \eqref{eqn:chi}, i.e., $a_{k+i}=b_{i}$, for all $i=1, \ldots, n-k$. Then, in the basis of $\Lambda_i$, $\chi$
decomposes as
$$
e_1\otimes (a_1-a_i) + \cdots + (e_1+\cdots e_k)\otimes a_i+ 
\cdots 
e_k\otimes (a_k-a_i)  + \sum_{j=1}^{n-k} e_{k+j} \otimes b_{j}.
$$
We see that relation (M) can be expressed as the following identity
\begin{equation}
\label{eqn:Chi}
\psi(\mathbf L, \chi, \Lambda) = \sum_{i=1}^k \psi(\mathbf L, \chi, \Lambda_i),
\end{equation}
which we can view as an analog of scissor relations. Our next result is that this relation follows from the special subcase $k=2$. This 
is a corollary of a general result concerning simplicial subdivisions of basic simplicial cones. Namely, consider the $\bZ$-module
$$
\mathcal F_{\mathbf{L}, \bZ}
$$
generated by symbols
$[\Lambda]$, 
where $\Lambda$ is a basic simplicial cone, modulo relations $(\mathrm{R}_k)$,  $k\ge 2$:

\begin{itemize}
\item
$
[\Lambda] = [\Lambda_1]+\cdots + [\Lambda_k],
$
\end{itemize} 
where 
$\Lambda$ and $\Lambda_i$ are as above, with $e_1,\ldots, e_n$ an arbitrary basis of $\Lambda$.

\begin{lemm} 
\label{lemm:chi}
Relations $(\mathrm{R}_k)$ for $k\ge 3$, follow from relations $(\mathrm{R}_2)$. 
\end{lemm}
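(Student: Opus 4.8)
\section*{Proof proposal for Lemma~\ref{lemm:chi}}

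The plan is to prove, for each $k\ge 3$, that relation $(\mathrm{R}_k)$ is a consequence of $(\mathrm{R}_2)$ together with $(\mathrm{R}_{k-1})$, and then to iterate this: by induction on $k$ (base case $k=3$, which uses $(\mathrm{R}_2)$ twice) every $(\mathrm{R}_k)$ with $k\ge 3$ holds in the group defined by $(\mathrm{R}_2)$ alone. Geometrically, $(\mathrm{R}_k)$ is the star subdivision of $\Lambda$ along the ray generated by the primitive vector $e_1+\cdots+e_k$, which lies in the relative interior of the face spanned by $e_1,\ldots,e_k$; the assertion is that such a subdivision can always be built up from star subdivisions along sums of just two of the generators.

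Fix a basis $e_1,\ldots,e_n$ of $\mathbf L$ spanning $\Lambda$ and set $v:=e_1+\cdots+e_k$, $u:=e_1+\cdots+e_{k-1}$, so that $v=u+e_k$. For $1\le i\le n$ write $\Lambda_i$ for the cone obtained from $\Lambda$ by replacing the generator $e_i$ with $v$, and $\Lambda_i'$ for the cone obtained by replacing $e_i$ with $u$. First apply $(\mathrm{R}_{k-1})$ to $\Lambda$ with $e_1,\ldots,e_{k-1}$ as active generators, obtaining $[\Lambda]=\sum_{i=1}^{k-1}[\Lambda_i']$. Each $\Lambda_i'$ has both $u$ and $e_k$ among its generators, so $(\mathrm{R}_2)$ applies to it along $u+e_k=v$ and yields $[\Lambda_i']=[\Lambda_i]+[M_i]$: here the summand obtained by replacing $u$ with $v$ has generating set $\{e_j:1\le j\le k,\ j\ne i\}\cup\{v\}\cup\{e_{k+1},\ldots,e_n\}$, which one checks is exactly the target cone $\Lambda_i$, while the summand obtained by replacing $e_k$ with $v$ is $M_i=\langle e_1,\ldots,\widehat{e_i},\ldots,e_{k-1},u,v,e_{k+1},\ldots,e_n\rangle$. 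Summing over $i$ gives $[\Lambda]=\sum_{i=1}^{k-1}[\Lambda_i]+\sum_{i=1}^{k-1}[M_i]$.

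It remains to identify $\sum_{i=1}^{k-1}[M_i]$ with the one missing term $[\Lambda_k]=\langle e_1,\ldots,e_{k-1},v,e_{k+1},\ldots,e_n\rangle$. But $\Lambda_k$ has $e_1,\ldots,e_{k-1}$ among its generators, with sum $u$, so applying $(\mathrm{R}_{k-1})$ to $\Lambda_k$ with those $k-1$ generators active produces precisely $[\Lambda_k]=\sum_{i=1}^{k-1}[M_i]$. Substituting this back gives $[\Lambda]=\sum_{i=1}^{k}[\Lambda_i]$, which is $(\mathrm{R}_k)$.

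The argument is essentially formal once the indices are pinned down. The two things to check while running it are that each relation is invoked in a legitimate instance --- the relations $(\mathrm{R}_j)$ are stated for an arbitrary basis of an arbitrary basic simplicial cone and an arbitrary choice of $j$ of its generators as active, which is exactly what is used --- and that every intermediate cone produced ($\Lambda_i'$, $M_i$, and the two pieces created by each $(\mathrm{R}_2)$ step) is again basic simplicial, which is guaranteed by the relation that produces it. I expect the only genuine, and minor, obstacle to be the bookkeeping in the middle step: confirming that the $u\mapsto v$ replacement inside $\Lambda_i'$ reproduces the target cone $\Lambda_i$ exactly; granting that, both halves of the computation telescope without further work.
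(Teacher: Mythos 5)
Your argument is correct, and it is the same proof as the paper's, merely read in the opposite direction: the paper starts from $\sum_{i=1}^k[\Lambda_i]$, rewrites $[\Lambda_k]$ by $(\mathrm{R}_{k-1})$ in the basis $e_1,\ldots,e_{k-1},v,e_{k+1},\ldots,e_n$, collapses each pair by $(\mathrm{R}_2)$, and assembles to $[\Lambda]$ by a second application of $(\mathrm{R}_{k-1})$; you start from $[\Lambda]$, expand by $(\mathrm{R}_{k-1})$, split each piece by $(\mathrm{R}_2)$, and recognize the leftovers as $[\Lambda_k]$ by $(\mathrm{R}_{k-1})$. Your $\Lambda_i'$ and $M_i$ are exactly the paper's $\Lambda_i''$ and $\Lambda_i'$, and all three relation instances you invoke match theirs one for one.
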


\begin{proof}
 We proceed by induction, assuming the claim for $k-1$. 
 We want to prove the claim for $k\ge 3$, i.e., 
 $$
 [\Lambda]_1+\cdots  + [\Lambda_k] = [\Lambda].
 $$
 By induction, 
$$
[\Lambda_k] =[\Lambda'_1]+ \cdots + [\Lambda_{k-1}'],
$$
 where
 $$
 \Lambda_i':= \bR_{\ge 0} e_1 +  \cdots + \underbrace{\bR_{\ge 0} (e_1+\cdots +e_{k-1})}_{\text{$i$-th place}} + \cdots + 
 \underbrace{\bR_{\ge 0} (e_1+\cdots + e_{k})}_{\text{$k$-th place}} +\cdots 
  +\bR_{\ge 0} e_n,
$$ 
 indeed, this is the relation $(\mathrm{R}_{k-1})$ in the basis 
 $$
 e_1,\ldots, e_{k-1}, (e_1+\cdots + e_k), e_{k+1},\ldots e_n.
 $$
 Therefore, 
 $$
 [\Lambda_1]+ \cdots + [\Lambda_k] = ([\Lambda_1]+[\Lambda_1'] )+\cdots +  ([\Lambda_{k-1}]+[\Lambda_{k-1}']).
 $$
 For every $i=1,\ldots, k-1$, we have the relation $(\mathrm{R}_2)$
 $$
 [\Lambda_i]+[\Lambda_i'] = [\Lambda_i''],
 $$
in an appropriate basis, 
 where
 $$
 \Lambda_i'':= \bR_{\ge 0} e_1 +  \cdots + \underbrace{\bR_{\ge 0} (e_1+\cdots +e_{k-1})}_{\text{$i$-th place}}  +\cdots 
 + \bR_{\ge 0} e_n.
$$ 
 Finally, $(\mathrm{R}_{k-1})$ in the basis $e_1,\ldots, e_n$ says that
 $$
 [\Lambda_1'']+ \cdots + [\Lambda_{k-1}''] =[\Lambda],
 $$
 which proves the claim.
\end{proof}

Now we can consider an {\em a priori} different group generated by symbols $[\Lambda]$, where $\Lambda$ is any full-dimensional strictly convex rational polyhedral cone, subject to relations
$$
[\Lambda] =  [\Lambda]_1+\cdots  + [\Lambda_k],
$$
where $\Lambda$ is the union of cones $\Lambda_i$ with disjoint interiors (here $k$ can be any integer $\ge 2$. 
The toric analog of Weak Factorization implies that the natural homomorphism from $\mathcal F_{\mathbf{L}, \bZ}$ to this group is an isomorphism. 
In these terms, Lemma~\ref{lemm:chi} says that it suffices to consider blowups with centers in codimension 2.

In consequence, $\mathcal M_n(G)$ admits an alternative description: as the group generated by symbols
$$
\psi(\mathbf L, \chi, \Lambda),
$$
depending only on the isomorphism classes of triples, 
where $\mathbf L$ and $\chi$ are as above, and $\Lambda$ is a finitely generated convex rational polyhedral cone, of full dimension, subject to the relations \eqref{eqn:Chi}, whenever there is a decomposition
$$
\Lambda= \Lambda_1\cup \cdots \cup \Lambda_k
$$
as above. This clearly extends to nonconvex cones.

We introduce a variant of previous constructions: instead of 
 $$
 \chi\in \mathbf L\otimes A= \Hom(\mathbf L^\vee, A)
 $$ 
 we can consider 
 $$
 \chi^* \in  \Hom(\mathbf L, A),
 $$
 again assuming that $\chi^*$ is surjective. 
 In a similar fashion, we can introduce the group $\mathcal M^*_n(G)$, which we call the {\em co-vector}
 version of (the {\em vector} version) $\mathcal M_n(G)$. This group is 
generated by symbols,  
  $$
 \langle a_1,\ldots, a_n\rangle^*,
 $$
 subject to relations 
 \begin{itemize}
 \item[$(\mathrm{S}^*)$] for all 
 $\sigma\in \mathfrak S_n$ and all $a_1,\ldots, a_n\in A$ we have
$$
\langle a_{\sigma(1)} , \ldots, a_{\sigma(n)} \rangle^*  = \langle a_1,\ldots, a_n\rangle^*,
$$ 
\item[$(\mathrm{M}^*)$] 
for all $2\le k \le n$, all $a_1,\ldots, a_k\in A$ and all 
$b_1,\ldots, b_{n-k} \in A$ such that 
$$
\sum_i \bZ a_i + \sum_j \bZ b_j = A
$$
we have
$$
\langle a_1,\ldots ,a_k, b_1, \ldots b_{n-k}\rangle^*  =
$$
$$
= \sum_{1\le i\le k}    
\langle a_1, \ldots , \sum_{j=1}^{k} a_j (\text{on $i$-th place}),  \ldots, a_k, b_1, \ldots, b_{n-k}\rangle^* 
$$
\end{itemize} 

As above, the relations for $k=2$ imply all others. 

It is not hard to show  that the $\bQ$-ranks of $\mathcal M_n(G)$ and $\mathcal M_n^*(G)$ are the same.
Indeed, by M\"{o}bius-type inversion formula, one can reduce the question to the extended versions of groups $\mathcal M_n(G)$ and $\mathcal M_n^*(G)$ omitting the condition that the
map 
$$
\chi:\mathbf L^\vee \to A, \quad \text{resp. }\quad \chi^*:\mathbf L\to A,
$$ 
is surjective. Then the finite Fourier transform (after a choice of an identification $G\simeq A$) identifies two complex vector spaces consisting of homomorphisms from two extended groups to $\bC$.

\section{Multiplication and co-multiplication}
\label{sect:co}

In this section, we work with the vector version, the co-vector version is analogous. 
We consider 
$$
\mathcal M_n(G)
$$
in {\em both} variables $n\ge 1$ and $G$. We define multiplication and co-multiplication maps and study their properties. 
An important role will be played by 
$$
\mathcal M_n^-(G),
$$
which is
defined {\em only for nontrivial groups} $G$, as the quotient of $\mathcal M_n(G)$ by the relation
\begin{equation}
\label{eqn:anti}
\langle -a_1, \ldots, a_n\rangle =- \langle a_1, \ldots, a_n\rangle.
\end{equation}
We denote by 
$$
\langle a_1, \ldots, a_n\rangle^{-} \in \mathcal M_n^-(G)
$$
the image of $\langle a_1, \ldots, a_n\rangle$ under the natural projection
$$
\mu^-: \mathcal M_n(G) \ra \mathcal M_n^-(G).
$$

We consider short exact sequences of finite abelian groups
$$
0\ra G'\ra G\ra G''\ra 0
$$
and the corresponding short exact sequences of character groups
$$
0\ra A''\ra A\ra A'\ra 0.
$$
Let 
$$
n=n'+n'', \quad n',n''\ge 1.
$$

We define a $\bZ$-bilinear `multiplication' map
$$
\nabla: \mathcal M_{n'}(G') \otimes \mathcal M_{n''}(G'')\ra \mathcal M_{n'+n''}(G),
$$
which on generators is given 
by the formula
\begin{equation}
\label{eqn:mult-def}
\langle a_1',\ldots, a_{n'}'\rangle \otimes \langle a_1'',\ldots, a_{n''}''\rangle \mapsto \sum \langle a_1,\ldots , a_{n'} , a_1'',\ldots, a_{n''}''\rangle,
\end{equation}
where the sum runs over all lifts $a_i\in A$ of $a_i'\in A'$, and the elements $a''_i$ are understood as elements of $A$,  via the embedding $A''\hookrightarrow A$. 

The compatibility with defining relations (S) and (M) is obvious. The condition that the elements in each summand on the right span $A$ follows
from the corresponding condition on the left for the groups $A',A''$. 
Note that $\nabla$ descends to a $\bZ$-bilinear map of corresponding quotient groups
$$
\nabla^-: \mathcal M_{n'}^-(G') \otimes \mathcal M_{n''}^-(G'')\ra \mathcal M_{n'+n''}^- (G),
$$
where both $G'$ and $G''$ are nontrivial. 

Next we define a `co-multiplication' map 
$$
\Delta: \mathcal M_{n'+n''}(G)\ra \mathcal M_{n'}(G') \otimes \mathcal M_{n''}^- (G''),
$$
where $G''$ is nontrivial, and  
which on generators is given 
by the formula
\begin{equation}
\label{eqn:first}
\langle a_1,\ldots, a_{n}\rangle \mapsto  \sum \,\,\, \langle a_{I'} \text{ mod } A'' \rangle \otimes  \langle a_{I''}\rangle^-.
\end{equation}
Here we put
$$
\langle a_{I'} \text{ mod } A'' \rangle=\langle a_{i_1} \text{ mod } A'', \ldots, a_{i_{n'}} \text{ mod } A'' \rangle, \quad I':=\{ i_1,\ldots, i_{n'}\}
$$
and, similarly, for $\langle a_{I''}\rangle$, using the symmetry relation (S). 
The sum is over all subdivisions 
$$
\{ 1,\ldots , n\} = I'\sqcup I'', \quad \text{  with } \# I'=n', \#I'' = n'',
$$
such that 
\begin{itemize}
\item 
for all $j\in I''$, we have $a_j\in A''\subset A$, and, in the first term on the right, 
the elements  $a_i, i\in I'$, are replaced by their images in  $A'=A/A''$;
\item 
(generation condition) the elements $a_j, j\in I'',$ span $A''$.
\end{itemize}
Note that, given the generation condition in each term of the right side of the formula, the expression
$\langle a_{I'} \text{ mod } A'' \rangle^{-}$ is a symbol, since the condition 
$\sum \bZ a_i =A$ implies that $\sum_{i\in I'} (a_i  \text{ mod } A'') =A'$. 
Therefore, the generation condition for the first term is automatic. 

\begin{prop}
\label{prop:delta}
The map $\Delta$ extends to a well-defined $\bZ$-linear homomorphism. 
\end{prop}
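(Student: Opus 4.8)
The plan is to verify that the formula \eqref{eqn:first} respects the two families of defining relations of $\mathcal M_{n'+n''}(G)$, namely the symmetry relations (S) and the subdivision relations (M). Compatibility with (S) is immediate: permuting the entries $a_1,\dots,a_n$ permutes the set of admissible subdivisions $\{1,\dots,n\}=I'\sqcup I''$ accordingly, and inside each term the symbols $\langle a_{I'}\text{ mod }A''\rangle$ and $\langle a_{I''}\rangle^-$ are themselves invariant under reordering (using (S) in $\mathcal M_{n'}(G')$ and (S) in $\mathcal M_{n''}^-(G'')$). So the real content is compatibility with relation (M), and by the reduction proved in Section~\ref{sect:hecke} (and reflected in Lemma~\ref{lemm:chi}) it suffices to treat the case $k=2$: we must check that $\Delta$ sends
$$
\langle a_1,a_2,b_1,\dots,b_{n-2}\rangle - \langle a_1-a_2,a_2,b_1,\dots,b_{n-2}\rangle - \langle a_1,a_2-a_1,b_1,\dots,b_{n-2}\rangle
$$
to zero (after renaming $b_j=a_{j+2}$). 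Note the index $2$ among $\{1,2\}$ that is ``left in place'' can be either of the two, so this is the two-term relation $\langle a_1,a_2,\dots\rangle=\langle a_1-a_2,\underline{a_2},\dots\rangle+\langle \underline{a_1},a_2-a_1,\dots\rangle$.

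To carry this out, I would fix the distinguished pair of positions $\{1,2\}$ and classify the admissible subdivisions $I'\sqcup I''$ of $\{1,\dots,n\}$ appearing on the left-hand side according to how $I''$ meets $\{1,2\}$: either (a) $\{1,2\}\subseteq I''$, (b) exactly one of $1,2$ lies in $I''$, or (c) $\{1,2\}\subseteq I'$. In case (c), the pair $a_1,a_2$ lands entirely in the first tensor factor, their images mod $A''$ satisfy the $k=2$ relation inside $\mathcal M_{n'}(G')$, and the entries indexed by $I''$ are untouched by the operation $a_1\mapsto a_1-a_2$ etc.\ (since $a_2$ is subtracted only from the first-factor entries, and $a_2\equiv $ its image mod $A''$); hence these terms match relation (M) of $\mathcal M_{n'}(G')$ applied after $\Delta$. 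In case (a), both $a_1,a_2\in A''$, the pair lands in the second tensor factor, the relation $\langle a_1,a_2,\dots\rangle^- = \langle a_1-a_2,a_2,\dots\rangle^- + \langle a_1,a_2-a_1,\dots\rangle^-$ holds in $\mathcal M_{n''}^-(G'')$ by its own (M), and one must also check that the generation condition ``$a_{I''}$ spans $A''$'' is preserved under $a_1\mapsto a_1-a_2$ — which it is, since $\bZ a_1+\bZ a_2=\bZ(a_1-a_2)+\bZ a_2$. Case (b) is where the terms genuinely rearrange: if, say, $1\in I''$ and $2\in I'$, then on the left the second-factor symbol contains $a_1$, and after the substitution producing $\langle a_1-a_2,\underline{a_2},\dots\rangle$ the element $a_1-a_2$ replaces $a_1$ in a symbol whose other $\{1,2\}$-entry $a_2$ sits (mod $A''$) in the first factor; one checks these contributions cancel in pairs against the corresponding case-(b) contributions of the two right-hand terms, again using $\bZ a_1 + \bZ a_2 = \bZ(a_1-a_2)+\bZ a_2$ to see that admissibility is matched, and using \eqref{eqn:anti} in $\mathcal M_{n''}^-(G'')$ when a sign must be absorbed (this is precisely why the second factor is the $-$-quotient: subtracting $a_2\in A''$ from $a_1$ when $a_1\in A''$ but the ``distinguished'' index moves can flip a symbol to its negative).

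The main obstacle I anticipate is the bookkeeping in case (b): one has to set up the cancellation between a subdivision of the left-hand symbol and a subdivision of one of the right-hand symbols where a single index has migrated between $I'$ and $I''$, and to confirm that in every such pairing (i) the generation/span conditions on both sides genuinely correspond, and (ii) the sign produced by reducing ``$a_1-a_2$ with $a_1\notin A''$'' does not actually arise because the span condition forces the relevant index configuration — while the $-$-relation \eqref{eqn:anti} is available exactly in the residual cases where a sign is unavoidable. Once the three cases (a), (b), (c) are organized this way, each reduces to an instance of relation (M) in $\mathcal M_{n'}(G')$, of relation (M) in $\mathcal M_{n''}^-(G'')$, or to an explicit term-by-term cancellation, and the proposition follows. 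A final remark: because \eqref{eqn:first} was defined only on symbols where $\sum_i\bZ a_i=A$, one should note throughout that every symbol produced on the right automatically satisfies its own generation condition, as already observed in the text preceding the Proposition, so no term is ``undefined''.
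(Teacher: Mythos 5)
Your proposal follows essentially the same route as the paper: reduce via Lemma~\ref{lemm:chi} to the $k=2$ case, note that the subdivisions with $\{1,2\}$ entirely in $I'$ or entirely in $I''$ reduce directly to relation (M) in $\mathcal M_{n'}(G')$ or $\mathcal M_{n''}^-(G'')$ respectively, and concentrate on the split case where the cancellation via \eqref{eqn:anti} occurs — the paper organizes that remaining case by the four possibilities for $a_1,a_2$ lying in $A''$ or not, which is the precise version of what you describe in (b). One small caution: the identity $\bZ a_1 + \bZ a_2 = \bZ(a_1-a_2) + \bZ a_2$ is the right reason admissibility is preserved in your cases (a) and (c), but in case (b) only one of $a_1,a_2$ enters the $\delta^{\mathrm{gen}}$ condition at a time, so the matching there is either exact term-by-term (when exactly one of $a_1,a_2$ lies in $A''$) or a cancellation of a pair $\langle a_1-a_2,\dots\rangle^- + \langle a_2-a_1,\dots\rangle^-$ (when both or neither lie in $A''$), rather than the admissibility bookkeeping you invoke.
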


\begin{proof}
For any $a\in A$, we put
$$
\delta^{gen}_{a\in A''}:=\begin{cases} 1 & a\in A'' \text{ and }  \bZ a+\sum_{j\in J''} \bZ a_j = A'', \\ 0 & \text{ otherwise. } \end{cases} 
$$
By Lemma~\ref{lemm:chi}, it suffices to check 2-term relations ($\mathrm{R}_2$).
We need to show that the image of the relation 
$$
\langle a_1,a_2,\ldots \rangle = \langle a_1-a_2, \ldots\rangle + \langle a_1,a_2-a_1, \ldots \rangle 
$$
on the left is a relation on the right, and that the terms on the right satisfy the generation condition 
(linear combinations of elements span the corresponding group). 
The only interesting part is when the first two arguments are distributed over the different factors in \eqref{eqn:first}, so that 
\begin{multline}
\langle a_1,a_2,\ldots \rangle  \mapsto \delta^{gen}_{a_1\in A''} \cdot \langle a_2 \text{ mod } A'', \ldots\rangle \otimes \langle a_1, \ldots \rangle ^-   \\
+ \delta^{gen}_{a_2\in A''} \cdot \langle a_2 \text{ mod } A'', \ldots\rangle \otimes \langle a_2, \ldots \rangle ^- 
\end{multline}
There are four cases: 
\begin{enumerate}
\item $a_1\in A'', a_2 \in A''$
\item $a_1\in A'', a_2\notin A''$
\item $a_1\notin A'', a_2\in A''$
\item $a_1\notin A'', a_2\notin A''$
\end{enumerate}
We fix disjoint subsets
$$
J':=I'\cap \{ 3, \ldots, n\} , \quad J'':=I''\cap \{ 3, \ldots,n\}
$$ 
of cardinality $n'-1$, respectively, $n''-1$. 
For each symbol on the left of \eqref{eqn:first} there are at most two nonzero terms on the right (depending the generation condition)
corresponding to the cases $a_1\in I', a_2\in I''$ or
$a_1\in I'', a_2\in I'$.

In Case (1), we have
$$
\langle a_1,a_2,\ldots \rangle  \mapsto \delta^{gen}_{a_1\in A''} \cdot \langle 0 , \ldots\rangle \otimes \langle a_1, \ldots \rangle ^-   
+ \delta^{gen}_{a_2\in A''} \cdot \langle 0 , \ldots\rangle \otimes \langle a_2, \ldots \rangle ^- 
$$
and 
\begin{multline*}
\langle a_1-a_2,a_2,\ldots \rangle 
+ \langle a_1,a_2-a_1,\ldots \rangle 
\mapsto 
\\
  \delta^{gen}_{a_1-a_2\in A''}  \cdot \langle 0, \ldots \rangle \otimes \langle a_1-a_2, \ldots\rangle^-  \\
   +\delta^{gen}_{a_2\in A''}  \cdot \langle 0 , \ldots \rangle \otimes \langle a_2, \ldots\rangle^- 
    +\delta^{gen}_{a_1\in A''}  \cdot \langle 0 , \ldots \rangle \otimes \langle a_1, \ldots\rangle^- \\
  +\delta^{gen}_{a_2-a_1\in A''}  \cdot \langle 0 , \ldots \rangle \otimes \langle a_2-a_1, \ldots\rangle^- \\
\end{multline*}
The first and the last term on the right cancel by relation \eqref{eqn:anti}, and the second and the third term are the image of 
$\langle a_1,a_2,\ldots \rangle$.

In Case (2), we have
$$
\langle a_1,a_2,\ldots \rangle  \mapsto \delta^{gen}_{a_1\in A''} \cdot \langle a_2 \text{ mod } A'' , \ldots\rangle \otimes \langle a_1, \ldots \rangle ^-   
$$
and 
\begin{multline*}
\langle a_1-a_2,a_2,\ldots \rangle 
+ \langle a_1,a_2-a_1,\ldots \rangle 
\mapsto \\
  \delta^{gen}_{a_1\in A''}  \cdot \langle a_2-a_1 \text{ mod } A'' , \ldots \rangle \otimes \langle a_1-a_2, \ldots\rangle^-  
 \end{multline*}
 The right sides of both expressions coincide, since $a_2=a_2-a_1 \text{ mod } A''$. 
Case (3) is similar to Case (2).

In Case (4), we have
$$
\langle a_1,a_2,\ldots \rangle  \mapsto  0
$$
and 
\begin{multline*}
\langle a_1-a_2,a_2,\ldots \rangle 
+ \langle a_1,a_2-a_1,\ldots \rangle 
\mapsto \\
  \delta^{gen}_{a_1-a_2\in A''}  \cdot \langle a_2 \text{ mod } A'' , \ldots \rangle \otimes \langle a_1-a_2, \ldots\rangle^-  
  \\  + 
    \delta^{gen}_{a_2-a_1\in A''}  \cdot \langle a_1 \text{ mod } A'' , \ldots \rangle \otimes \langle a_2-a_1, \ldots\rangle^-,
 \end{multline*}
the terms on the right cancel by \eqref{eqn:anti}. 
  \end{proof}
  A straightforward check shows that $\Delta$ descends to a $\bZ$-linear homomorphism
  $$
\Delta^-: \mathcal M_{n'+n''}^- (G)\ra \mathcal M_{n'}^- (G') \otimes \mathcal M_{n''}^- (G'').
$$

Denote by $\mathcal G_{\bullet}$ a flag of subgroups 
$$
0=G_{\le 0}\subsetneq G_{\le 1}\subsetneq \ldots \subsetneq G_{\le r}=G,
$$ 
let $r$ be its length.
Consider the diagram of homomorphisms
\begin{multline*}
\mathcal M_n^- (G)\rightleftarrows \bigoplus_{\substack{n_1+n_2 =n\\ \text{$\mathcal G_{\bullet}$ of lengths $2$}}}      
\mathcal M_{n_1}^- (gr_1(\mathcal G_{\bullet}))\otimes  \mathcal M_{n_1}^- (gr_2(\mathcal G_{\bullet}))
\\
 \rightleftarrows
\bigoplus_{\substack{n_1+n_2+n_3=n \\ \text{$\mathcal G_{\bullet}$ of lengths $3$}}}  
    \mathcal M_{n_1}^- 
(gr_1(\mathcal G_{\bullet})) \otimes  \mathcal M_{n_2}^- (gr_2(\mathcal G_{\bullet}))\otimes  \mathcal M_{n_3}^- (gr_3(\mathcal G_{\bullet}))
 \rightleftarrows \cdots 
\end{multline*}
where the right arrows are the natural simplicial extensions of the co-multiplication $\Delta^-$ (given by alternating sums) and the left arrows are corresponding extensions of the multiplication maps. We obtain two complexes, with differentials denoted by $d_{\Delta}$ and $d_{\nabla}$ of degree $(+1)$ and $(-1)$, respectively. 

\begin{conj}
\label{conj:1}
The cohomology of the complex with differential $d_{\Delta}$, after tensoring by $\bQ$,  is concentrated in degree 0. 
\end{conj}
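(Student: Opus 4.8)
The plan is to produce, for each nontrivial finite abelian group $G$ and each $n\ge 1$, a $\bQ$-linear contracting homotopy for $d_\Delta$ in positive degrees, built out of the multiplication maps $\nabla^-$. The complex in question is, in degree $r$, the direct sum over flags $\mathcal G_\bullet$ of length $r$ and over ordered partitions $n_1+\dots+n_r=n$ of the tensor products $\bigotimes_j \mathcal M_{n_j}^-(gr_j(\mathcal G_\bullet))$, with $d_\Delta$ the alternating sum of the maps that refine a flag by inserting one further subgroup and correspondingly splitting one tensor factor via $\Delta^-$. Since $\Delta^-$ and $\nabla^-$ are built from the same combinatorics of short exact sequences (compare \eqref{eqn:mult-def} and \eqref{eqn:first}), the natural candidate for a homotopy $h$ in each bidegree merges the \emph{first two} graded pieces of a flag, i.e.\ forgets $G_{\le 1}$ and applies $\nabla^-\colon \mathcal M_{n_1}^-(gr_1)\otimes \mathcal M_{n_2}^-(gr_2)\to \mathcal M_{n_1+n_2}^-(gr_1\oplus gr_2)$ to the corresponding extension $0\to gr_1\to G_{\le 2}\to gr_2\to 0$. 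The first step is to prove the compatibility $\Delta^-\circ\nabla^- = \mathrm{id} + (\text{boundary terms})$ on each short exact sequence of nontrivial groups: this is the analogue of a Mackey/co-unit identity, and the nontrivial content is exactly the $k=2$ scissor relation $(\mathrm R_2)$ together with the antisymmetry \eqref{eqn:anti}, which already appeared in the proof of Proposition~\ref{prop:delta}. Having this local identity, I would then assemble the standard simplicial computation: $d_\Delta h + h d_\Delta = \mathrm{id}$ in degrees $\ge 1$, with the correction terms in the local identity accounting for the faces other than the $0$-th and $1$-st.

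Concretely I would set up the complex as (the $G$-part of) a \emph{bar-type complex}: after tensoring with $\bQ$, one checks that $\bigoplus_G \mathcal M_\bullet^-(G)$, graded by $n$ and with product $\nabla^-$, is a (graded, $\bQ$-linear) algebra, that $\Delta^-$ is coassociative on the nose and is a morphism of $\nabla^-$-comodules, and that the flag complex of Conjecture~\ref{conj:1} is precisely the (reduced) cobar complex of the coalgebra $(\mathcal M_\bullet^-(G)\otimes\bQ,\Delta^-)$ relative to the ground object $\mathcal M_0^- := \bQ$ sitting in the trivial-group slot. The homotopy $h$ is then the usual extra degeneracy of a cobar complex, available once one has a coaugmentation, i.e.\ a $\bQ$-splitting of $\Delta^-$ in degree one by $\nabla^-$ composed with the counit. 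So the three checks to carry out, in order, are: (i) coassociativity of $\Delta^-$ (a bookkeeping argument over double flags, reducible to $k=2$ by Lemma~\ref{lemm:chi}); (ii) the counit/extra-degeneracy identity $\Delta^-\nabla^- - \mathrm{id} = $ lower face terms, with \eqref{eqn:anti} killing the "diagonal" corrections exactly as in Cases (1) and (4) of Proposition~\ref{prop:delta}; (iii) the formal conclusion that a coaugmented cobar complex with an extra degeneracy is acyclic in positive degrees after $\otimes\bQ$.

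The main obstacle I expect is step (ii): the extra-degeneracy identity is \emph{not} an identity over $\bZ$ — it is precisely here that $2$-torsion enters (the factor $2$ in $(\bm\mu_1)$, the relation $\langle a,a,\dots\rangle = 2\langle a,0,\dots\rangle$, and the fact that \eqref{eqn:anti} is imposed rather than derived). One must verify that after inverting $2$ (indeed after $\otimes\bQ$) the "diagonal'' contributions $\delta^{gen}_{a_1-a_2\in A''}\langle a_1-a_2,\dots\rangle^- + \delta^{gen}_{a_2-a_1\in A''}\langle a_2-a_1,\dots\rangle^-$ cancel identically, uniformly in the partition and the flag, so that $h$ is genuinely $\bQ$-linear and well defined on the whole direct sum. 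A secondary subtlety is that $\mathcal M_n^-(G)$ is defined only for $G$ nontrivial, so the coaugmentation must be inserted "by hand'' in the trivial slot and one must check the generation conditions do not obstruct the extra degeneracy at the bottom of the complex; here the observation already made after \eqref{eqn:first} — that the generation condition on the $A'$-part is automatic — is what makes $h$ land in the right subspace. Granting (i)–(iii), the identity $d_\Delta h + h d_\Delta = \mathrm{id}$ on degrees $\ge 1$ gives $H^{\ge 1}=0$, and $H^0$ is the evident cokernel, which is nonzero, so the cohomology is concentrated in degree $0$ as claimed.
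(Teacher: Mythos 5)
The statement you are trying to prove is labeled Conjecture~\ref{conj:1} in the paper and is presented there as an open problem: the authors give no proof. They observe only that it would follow from Conjecture~\ref{conj:int}, which asserts that the Laplacian $\bm{\Delta}=d_{\Delta}d_{\nabla}+d_{\nabla}d_{\Delta}$ is invertible after $\otimes\,\bQ$ on every term of the complex except $\mathcal M_n^-(G)$; but that is likewise an unproved conjecture. So there is no proof in the paper to compare yours against, and any complete argument here would be a genuinely new result rather than a reproof.

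Your plan --- build a contracting homotopy $h$ out of the $\nabla^-$ maps and establish $d_\Delta h + h d_\Delta = \mathrm{id}$ in positive degrees --- is the same Hodge-theoretic instinct behind the paper's Conjecture~\ref{conj:int}, but the key identity in your step~(ii) is false, and the failure is not merely a boundary term. Fix a short exact sequence $0\to G'\to G\to G''\to 0$, dually $0\to A''\to A\to A'\to 0$. By \eqref{eqn:mult-def}, applying $\nabla^-$ to a basic tensor $\langle a_1',\ldots,a_{n'}'\rangle^-\otimes\langle a_1'',\ldots,a_{n''}''\rangle^-$ produces a sum over all $|A''|^{\,n'}$ choices of lifts of the $a_i'$. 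Applying $\Delta^-$ with respect to the \emph{same} subgroup $G'$, the diagonal splitting $I''=\{n'+1,\ldots,n\}$ recovers the original tensor from each lift (since a lift reduces back to $a_i'$ modulo $A''$). Hence
$\Delta^-_{G'}\circ\nabla^-_{G'} = |A''|^{\,n'}\cdot\mathrm{id} + (\text{contributions from other splittings}),$
not $\mathrm{id}+(\text{boundary})$. The scalar $|A''|^{\,n'}$ varies with both the flag and the partition, so it is not a global unit that can be absorbed by rescaling $h$; the naive extra-degeneracy identity $d_\Delta h+h d_\Delta=\mathrm{id}$ therefore does not hold, and your $h$ is not a contracting homotopy.

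This is exactly the content that Conjecture~\ref{conj:int} packages: one only needs $d_\Delta d_\nabla+d_\nabla d_\Delta$ to be \emph{invertible} (not the identity), after which $\bm{\Delta}^{-1}d_\nabla$ furnishes the homotopy. To establish invertibility one must control not only the nonuniform diagonal scalars above but also the numerous off-diagonal cross-terms coming from the other refining subgroups $G'$ and the other splittings $I'\sqcup I''$; neither the paper nor your proposal has an argument for this. Your steps~(i) (coassociativity of $\Delta^-$) and (iii) (the formal acyclicity deduction from a contracting homotopy) look fine; step~(ii) is where the conjecture actually lives, and the identity you assert there does not hold.
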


This conjecture, as well as the vanishing of cohomology of the other complex, with differential $d_{\nabla}$, after tensoring with $\bQ$, would follow from:

\begin{conj}
\label{conj:int}
The operator 
$$
\bm{\Delta}:=d_{\Delta} \circ d_{\nabla} +d_{\nabla}\circ d_{\Delta},
$$
acting on each term of the complex, is invertible, after tensoring with $\bQ$, except on the first term $\mathcal M_n^- (G)$.
\end{conj}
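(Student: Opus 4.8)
The plan is to read the two complexes as the cobar and bar complexes of the bialgebra-type structure carried by the family $\{\cM^-_n(H)\otimes\bQ\}$, with $H$ running over subquotients of $G$ and $n\ge1$, the ``product'' being $\nabla^-$ (indexed by an extension $0\to G'\to G\to G''\to0$) and the ``coproduct'' $\Delta^-$; the flag length is the homological grading, the first term $\cM^-_n(G)$ sits in degree $0$, and $\bm\Delta=d_\Delta d_\nabla+d_\nabla d_\Delta$ is the associated combinatorial Laplacian. The first step is to promote Proposition~\ref{prop:delta} to the full Hopf compatibility: an identity
$$
\Delta^-\circ\nabla^-=(\nabla^-\otimes\nabla^-)\circ(\mathrm{id}\otimes\tau\otimes\mathrm{id})\circ(\Delta^-\otimes\Delta^-),
$$
proved by the same four-case bookkeeping as in Proposition~\ref{prop:delta} --- the $\delta^{gen}$-terms that cancelled there through \eqref{eqn:anti} are exactly the ones producing the Hopf sign --- together with the (co)associativity of $\nabla^-$ and $\Delta^-$ along refinements of flags. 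With the understanding that $\cM^-$ is undefined at the trivial group, so that one works with the ``punctured'' version and reads connectedness in the flag-length grading, this exhibits $\{\cM^-_n(H)\otimes\bQ\}$ as a connected graded Hopf ``monoid'' over the groupoid of finite abelian groups with extensions.

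The second step is a rigidity statement: $\cM^-$ should be \emph{free} over its primitives $\mathcal P$ (for $\nabla^-$) and \emph{cofree} over $\mathcal P$ (for $\Delta^-$), where $\mathcal P_n(G)\otimes\bQ$ is the subspace of $\cM^-_n(G)\otimes\bQ$ annihilated by all $\Delta^-$. Freeness makes the $d_\nabla$-complex a resolution of $\mathcal P$, cofreeness does the same for the $d_\Delta$-complex, and together they give Conjecture~\ref{conj:1}. To get the actual invertibility of $\bm\Delta$, introduce the Eulerian idempotents $e^{(1)},e^{(2)},\dots$ of the convolution algebra, decomposing each term into weight pieces with $\mathrm{id}=\sum_{j\ge1}e^{(j)}$ and $e^{(1)}$ projecting onto $\mathcal P$; the mechanism familiar from bar/cobar of (co)commutative Hopf algebras predicts that $\bm\Delta$ acts on the weight-$j$ part of the flag-length-$r$ term by an explicit scalar $\lambda_{r,j}$, and one must check $\lambda_{r,j}\neq0$ for all $(r,j)\neq(1,1)$. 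A complementary route uses the finite Fourier transform of Section~\ref{sect:gen-rel}, which supplies a perfect pairing over $\bQ$ for which $\nabla^-$ and $\Delta^-$ are formally adjoint, hence $\bm\Delta$ self-adjoint; absent a definiteness statement this only reduces the conjecture to acyclicity off degree $0$, but it does make the two complexes ``Hodge dual'' and suggests that the identification, announced in the introduction, of $\cM^-_n(G)$ with cohomology of an arithmetic group with one-dimensional coefficients should turn $d_\Delta$, $d_\nabla$ into standard boundary/parabolic-induction operators, reducing everything to a known acyclicity.

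The main obstacle is that this structure is \emph{twisted}: $\nabla^-$ and $\Delta^-$ are indexed by extensions rather than by ordered tensor factors, so there is no evident (co)commutativity, and the classical Leray/Milnor--Moore rigidity theorems --- which are precisely what guarantee freeness \emph{and} cofreeness over primitives in characteristic $0$ --- do not apply off the shelf; indeed (co)freeness can fail for connected Hopf algebras without a (co)commutativity hypothesis, so the real content is to isolate the additional structure (a species/Aguiar--Mahajan-type cocommutativity adapted to extensions, or Koszulness of the $(\nabla^-,\Delta^-)$-pair) that forces it here and to re-prove rigidity in that setting, carrying the extension/splitting data through every refinement of flags; the generation conditions $\delta^{gen}$ make this bookkeeping heavy, and one will likely want to first pass, as in the M\"obius-inversion argument of Section~\ref{sect:gen-rel}, to the extended groups where the $\delta^{gen}$ are dropped, establish the identities and the primitivity filtration there, and descend. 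The non-unital base (no trivial-group term) then forces each induction to be reorganized around the shortest nontrivial subquotients, and at the quantitative end one still has to compute the scalars $\lambda_{r,j}$ and exclude accidental vanishing --- a point where the $\cM^-_n$-grading couples to the Eulerian weight and the naive scalars from the commutative model may need correction. I expect the combination of establishing twisted rigidity and then pinning down the $\lambda_{r,j}$ to be the crux.
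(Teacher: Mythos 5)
This statement is Conjecture~\ref{conj:int}; the paper does not prove it, and does not claim to. What the paper does offer is a single remark in Section~\ref{sect:co} reducing the conjecture to a combinatorial question: pass to the ``extended'' module $\tilde{\mathcal M}_n(G)$ (symmetric symbols spanning $A$, with no further relations), on which $d_{\tilde\Delta}$ is the exact adjoint of $d_{\tilde\nabla}$ for the standard inner product (identity matrix in the natural symbol basis), so that $\tilde{\bm\Delta}$ is a positive semi-definite self-adjoint operator whose invertibility in a given degree is \emph{equivalent} to acyclicity there; the surjection of complexes $\tilde{\mathcal M}\twoheadrightarrow\mathcal M^-$ with matching differentials then transports invertibility of $\tilde{\bm\Delta}$ to invertibility of $\bm\Delta$. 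The paper stops here and says it expects the combinatorial acyclicity question to be tractable. Your proposal is therefore not being checked against an existing proof but against a reduction.

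Two corrections to the ``complementary route'' you sketch. First, the adjointness of $d_\Delta$ and $d_\nabla$ is not supplied by the finite Fourier transform of Section~\ref{sect:gen-rel} (which compares the vector and covector modules $\mathcal M_n$ and $\mathcal M_n^*$ via M\"obius inversion); it comes from the much more naive observation that on the \emph{free} module $\tilde{\mathcal M}_n(G)$ the two differentials are transposed matrices in the symbol basis. Second, the parenthetical ``absent a definiteness statement'' undersells what that pairing gives: the identity-matrix form \emph{is} positive definite, $\tilde{\bm\Delta}$ is semi-positive self-adjoint, and $\ker\tilde{\bm\Delta}=\ker d_{\tilde\Delta}\cap\ker d_{\tilde\nabla}$ is exactly the harmonic representative of cohomology. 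So the adjointness route does not ``only'' reduce to acyclicity --- the reduction to acyclicity is the whole point, and it is optimal in this framework. You still need to prove acyclicity, which nobody does.

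On your primary route: you correctly identify that the Leray/Milnor--Moore rigidity that would give both freeness and cofreeness over primitives in characteristic zero does not apply off the shelf, because the (co)product is indexed by short exact sequences of abelian groups, which is a genuinely twisted species-like structure with no evident (co)commutativity. Your candour here is warranted: this is precisely the place where a proof would have to produce new content, and nothing in the paper supplies the needed rigidity theorem. You also quietly absorb a second difficulty which deserves to be flagged: $\mathcal M^-$ is undefined at the trivial group, so the ``connected graded'' hypothesis of the classical statements fails at the unit; your suggestion to ``reorganize around the shortest nontrivial subquotients'' is exactly the pressure point where the absence of a unit bites. Finally, the Eulerian-idempotent scalar computation is a plausible template but is, as you say, unverified, and the twisting makes it unclear that the convolution algebra is even commutative enough for the idempotents to exist. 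In short, this is a reasonable research sketch, consistent with and slightly more ambitious than the paper's own remark, but it does not close the gap and neither does the paper.
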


We define
\begin{equation}
\label{eqn:prim}
\mathcal M_{n,prim}^-(G) := \mathrm{Ker} \left(\mathcal M_n^-(G)\ra  \bigoplus_{\substack{n'+n'' =n,\\ n',n''\ge 1\\  0\subsetneq  G' \subsetneq G}}
\mathcal M_{n'}^-(G') \otimes  \mathcal M_{n''}^-(G/G')\right),
\end{equation}
this is the cohomology of the complex in degree 0, with differential $d_{\Delta}$. Alternatively, assuming Conjecture~\ref{conj:1},  
we could also put
\begin{equation}
\label{eqn:prim2}
\mathcal M_{n,prim}^-(G) := \mathrm{Coker} \left(\mathcal M_n^-(G)\leftarrow \bigoplus_{\substack{n'+n'' =n, \\n',n''\ge 1\\  0\subsetneq  G' \subsetneq G}}
\mathcal M_{n'}^-(G') \otimes  \mathcal M_{n''}^-(G/G')\right),
\end{equation}
this is the cohomology of the complex in degree 0, with differential $d_{\nabla}$.

The conjecture implies that there are two {\em canonical} isomorphisms
\begin{multline*}
\label{eqn:3}
\mathcal M_{n}^-(G) \otimes \bQ  \simeq \\
\bigoplus_r \bigoplus_{\substack{n_1+\cdots + n_r =n\\ \text{$\mathcal G_{\bullet}$ of lengths $r$}}} 
\mathcal M_{n_1,prim}^- (gr_1(\mathcal G_{\bullet})) \otimes \cdots \otimes\mathcal M_{n_r,prim}^- (gr_r(\mathcal G_{\bullet}))  \otimes \bQ,
\end{multline*}
one respecting the structure of multiplication $\nabla$ and the other the structure of co-multiplication $\Delta$.

\begin{rema}
Let $\tilde{\mathcal M}_n(G)$ be the $\bZ$-module generated by $\langle a_1,\ldots ,a_n\rangle$ satisfying the symmetry condition (S),  and such that
$a_1,\ldots, a_n$ generate $A$ (as a variant one could insist that $a_j\neq 0$ for all $j$). 
The formulas for $\nabla$ and $\Delta$ are well-defined on $\tilde{\mathcal M}_n(G)$, and we denote the corresponding differentials by $d_{\tilde{\nabla}}$ and 
$d_{\tilde{\Delta}}$. 
The corresponding complex admits a surjection onto the  complex 
for $\mathcal M_n^-(G)$, with matching of both differentials. The invertibility (over $\bQ$) of 
$$
\tilde{\bm{\Delta}}:=d_{\tilde{\Delta}} \circ d_{\tilde\nabla} +d_{\tilde\nabla}\circ d_{\tilde\Delta},
$$
in some degree implies the invertibility of $\bm{\Delta}$, in the same degree. 
Note that the operator $d_{\tilde{\Delta}}$ is adjoint to $d_{\tilde{\nabla}}$, with respect to a positive-definite quadratic form, given by the identity matrix in the natural basis. Therefore, the invertibility of $\tilde{\bm{\Delta}}$ is equivalent to the vanishing of cohomology modulo torsion in this degree, 
with respect to the differential $d_{\tilde{\nabla}}$. This last question is a purely combinatorial one, concerning only abelian groups and generating properties of 
collections of elements. We expect that it will be more tractable than the original conjecture. 
\end{rema}

Consider the diagram of homomorphisms
\begin{multline*}
\mathcal M_n(G)\ra  \bigoplus_{\substack{n_1+n_2 =n\\ \text{$\mathcal G_{\bullet}$ of lengths $2$}}}      
\mathcal M_{n_1}(gr_1(\mathcal G_{\bullet})) \otimes  \mathcal M_{n_1}^- (gr_2(\mathcal G_{\bullet}))
\\
 \ra 
\bigoplus_{\substack{n_1+n_2+n_3=n \\ \text{$\mathcal G_{\bullet}$ of lengths $3$}}}  
    \mathcal M_{n_1}
(gr_1(\mathcal G_{\bullet})) \otimes  \mathcal M_{n_2}^- (gr_2(\mathcal G_{\bullet}))\otimes  \mathcal M_{n_3}^-(gr_3(\mathcal G_{\bullet}))
 \ra \cdots 
\end{multline*}
where
\begin{itemize}
\item 
 $\mathcal G_{\bullet}$ is a flag of subgroups of type
$$
0=G_{\le 0}\subseteq G_{\le 1}\subsetneq \ldots \subsetneq G_{\le r}=G, \quad r\ge 1, 
$$ 
with strict inclusions, except in the first step;
\item 
in each term, the leftmost factor is the full group, and not the quotient by the relation \eqref{eqn:anti}. 
\end{itemize}
Here the differential uses both maps $\Delta$ and 
$\Delta^-$.  Again, this is a complex; notice that here we do not have a dual differential in the other direction.

\begin{conj}
\label{conj:2}
The cohomology of the above complex, after tensoring by $\bQ$, is concentrated in degree 0. 
\end{conj}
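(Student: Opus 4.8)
The plan is to reduce this to Conjecture~\ref{conj:1} together with one residual statement about the full group $\mathcal M_n(G)$, by induction on $\#G$. Filter the complex $C^\bullet$ of the statement by the order of the first nontrivial step $G_{\le 1}$ of the flag $\mathcal G_\bullet$. The stratum with $G_{\le 1}=0$ is a \emph{subcomplex}: the leftmost factor is then $\mathcal M_{n_1}(0)$, which vanishes for $n_1\ge 2$ and is $\bZ$ for $n_1=1$, so nothing further can be split off it and the differential preserves this subspace. After deleting the trivial factor the stratum is exactly the complex of Conjecture~\ref{conj:1} for $G$ in dimension $n-1$, placed in cohomological degrees $\ge 1$, so by Conjecture~\ref{conj:1} its rational cohomology is $\mathcal M^-_{n-1,prim}(G)\otimes\bQ$ (notation of \eqref{eqn:prim}), concentrated in degree $1$. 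For $0\subsetneq H\subsetneq G$, the graded piece indexed by flags with $G_{\le 1}=H$ has, since any refinement of the $H$-step strictly lowers the filtration, graded differential equal to the iterated $\Delta^-$ on the quotient tail; hence it is $\mathcal M_{n_1}(H)$ tensored with a Conjecture~\ref{conj:1}-type complex for $G/H$, shifted by one, and by Conjecture~\ref{conj:1} for $G/H$ its rational cohomology is $\bigoplus_{n_1}\mathcal M_{n_1}(H)\otimes\mathcal M^-_{n-n_1,prim}(G/H)\otimes\bQ$, again in degree $1$. The top stratum $H=G$ is the single term $C^0=\mathcal M_n(G)$ in degree $0$.

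Thus the spectral sequence of the filtration has $E_1$ supported in total degrees $0$ and $1$, and Conjecture~\ref{conj:2} becomes the assertion that the resulting $E_1$-complex
$$
\mathcal M_n(G)\otimes\bQ\ \longrightarrow\ \bigoplus_{0\subsetneq H\subseteq G,\ n_1}\mathcal M_{n_1}(H)\otimes\mathcal M^-_{n-n_1,prim}(G/H)\otimes\bQ\ \longrightarrow\ \cdots
$$
(with the $H=0$ term read as $\mathcal M^-_{n-1,prim}(G)\otimes\bQ$, and $d_1$ induced by $\Delta$ and $\Delta^-$) is exact off its first term. Feeding the induction hypothesis — Conjecture~\ref{conj:2} for the proper subgroups $H$ — into the factors $\mathcal M_{n_1}(H)$ turns this into the statement that $\mathcal M_n(G)$ is resolved, over $\bQ$, by its primitive pieces, with the comultiplications supplying the differentials; it is precisely the full-$\mathcal M$ analog of the primitive decomposition that Conjecture~\ref{conj:1} packages for $\mathcal M^-$.

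I expect this last step to be the genuine obstacle, and not formal. Unlike the complex of Conjecture~\ref{conj:1}, the present one carries no multiplication-type differential in the opposite direction — the reason being that the comultiplication lands in a factor of type $\mathcal M^-$, out of which there is no natural map back — so the self-adjointness and Laplacian argument underlying Conjecture~\ref{conj:int} is unavailable here, and one needs a new ingredient. The route I would pursue is the reformulation developed in Part~2: realizing $\mathcal M^-_n(G)$ as the cohomology of an arithmetic group with one-dimensional coefficients, the complexes above become bar- or covering-type resolutions over the poset of subgroups of $G$, and the required exactness should follow from acyclicity properties of those resolutions, in the same spirit as the combinatorial reduction proposed for Conjecture~\ref{conj:1}. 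A coarser alternative, worth recording, is to use the termwise-surjective morphism of complexes $\mu^-\otimes\mathrm{id}$ from $C^\bullet$ onto the complex of Conjecture~\ref{conj:1}; its long exact sequence reduces Conjecture~\ref{conj:2} to Conjecture~\ref{conj:1} together with the rational acyclicity, in positive degrees, of the kernel complex built from $\ker(\mathcal M_\bullet(-)\to\mathcal M^-_\bullet(-))$ — but that kernel complex is again of the present type, so this packaging does not by itself close the argument.
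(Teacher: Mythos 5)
The statement you were asked to address is \emph{Conjecture~\ref{conj:2}} in the paper. The authors state it as a conjecture and offer no proof; they only record its consequences (the noncanonical decomposition of $\mathcal M_n(G)\otimes\bQ$ into primitive pieces) and relate it, in Section~\ref{sect:alge}, to expected statements about cuspidal cohomology of arithmetic groups. So there is no argument in the paper against which to check you.

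That said, what you wrote is not a proof either, and you say so yourself — which is the right thing to do here. As a reduction it is sensible. The filtration by $G_{\le 1}$ does produce a filtered complex (refining the first step strictly shrinks $G_{\le 1}$, refining any later step leaves it fixed), and your analysis of the graded pieces is correct: the $G_{\le 1}=0$ piece is the Conjecture~\ref{conj:1} complex for $G$ in dimension $n-1$ shifted by one, the $G_{\le 1}=H$ piece is $\bigoplus_{n_1}\mathcal M_{n_1}(H)$ tensored with the Conjecture~\ref{conj:1} complex for $G/H$ shifted by one, and the $G_{\le 1}=G$ piece is the single term $\mathcal M_n(G)$ in degree $0$. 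Granting Conjecture~\ref{conj:1}, $E_1$ is concentrated in total degrees $0$ and $1$, and Conjecture~\ref{conj:2} is equivalent to surjectivity of the total map $\mathcal M_n(G)\otimes\bQ\to\bigoplus_{H,n_1}\mathcal M_{n_1}(H)\otimes\mathcal M^-_{n-n_1,\mathrm{prim}}(G/H)\otimes\bQ$ (the "filtered surjectivity" along the $d_r$'s is, over a field, the same as plain surjectivity). One wording quibble: calling this an "$E_1$-complex which should be exact off its first term" is misleading, since the $E_1$ page lives in only two total degrees; what is actually at stake are the successive differentials $d_1,\dots,d_{k}$ out of $\mathcal M_n(G)$, not the exactness of a longer complex.

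The genuine gap is exactly where you flag it. Unlike the self-dual complex of Conjecture~\ref{conj:1}, here the comultiplication lands in $\mathcal M^-$, there is no multiplication back, and the Laplacian/self-adjointness strategy of Conjecture~\ref{conj:int} is not available. Your two proposed ways forward (the Steinberg-module/arithmetic-cohomology reformulation from Part~2, or peeling off $\ker(\mu^-\otimes\mathrm{id})$) are reasonable, but the second, as you note, only recycles the same shape of problem, and the first would require proving a nontrivial exactness statement about the dimension filtration on $\mathcal F(V)$ that the paper also leaves open. In short: you have correctly reduced Conjecture~\ref{conj:2}, modulo Conjecture~\ref{conj:1} and induction on $\#G$, to a concrete surjectivity claim for $\mathcal M_n(G)$, but that surjectivity is precisely the open content of the conjecture, and neither you nor the paper establishes it.
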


\

We now define
\begin{equation}
\label{eqn:prim}
\mathcal M_{n,prim}(G) = \mathrm{Ker} \left(\mathcal M_n(G)\ra \bigoplus_{\substack{n'+n'' =n\\ n',n''\ge 1\\  0\subseteq  G' \subsetneq G}}
\mathcal M_{n'}(G') \otimes  \mathcal M_{n''}^-(G/G')\right),
\end{equation}
this is the cohomology of the complex in degree 0; note that the inclusion $G'$ could be trivial. 
We have
$$
\mathcal M_1(G)=\mathcal M_{1,prim}(G)
$$
for all $G$; when $G=1=\bZ/1\bZ$ we have 
$$
\mathcal M_1(1)=\bZ,  \quad \mathcal M_{n}(1) =  \mathcal M_{n,prim}(1)=0, \text{ for } n\ge 2.
$$ 
Conjecture~\ref{conj:2} implies that there is a {\em noncanonical} isomorphism
\begin{multline*}
\mathcal M_{n}(G) \otimes \bQ \simeq  \\
\bigoplus_r \bigoplus_{\substack{n_1+\cdots + n_r =n\\ \text{$\mathcal G_{\bullet}$ of lengths $r$}}} 
\mathcal M_{n_1,prim}(gr_1(\mathcal G_{\bullet}))\otimes \cdots \otimes\mathcal M_{n_r,prim}^-(gr_r(\mathcal G_{\bullet}))  \otimes \bQ.
\end{multline*}


Computer experiments (see Section~\ref{sect:exp}) suggest that, for all $N\ge 1$:
\begin{itemize}
\item 
$$
\mathcal M_{2,prim}(\bZ/N\bZ)\otimes \bQ = \mathcal M_{2,prim}^-(\bZ/N\bZ)\otimes \bQ
$$ 
and is equal to the dimension of the space of cusp forms of weight 2 for $\Gamma_1(N)$; we will discuss this in Section~\ref{sect:modular},
\item
$$
\mathcal M_{3,prim}(\bZ/N\bZ)\otimes \bQ = \mathcal M_{3,prim}^-(\bZ/N\bZ)\otimes \bQ
$$
and is equal to the number of certain cuspidal automorphic representations for a congruence subgroup of $\mathrm{GL}_3(\bZ)$, 
generated by a vector invariant under a congruence subgroup,
\item 
$$
\hskip 2.3cm 
\mathcal M_{n,prim}(\bZ/N\bZ)\otimes \bQ = \mathcal M_{n,prim}^-(\bZ/N\bZ)\otimes \bQ = 0, \quad n\ge 4,
$$
\end{itemize}
Conjectures \ref{conj:1} and \ref{conj:2} would allow us to compute $\bQ$-ranks of $\mathcal M_n(\bZ/N\bZ)$ using the
\begin{itemize}
\item 
Euler function:
\begin{align*}
\dim(\mathcal M_{1,prim} (\bZ/N\bZ)\otimes \bQ)   & = \phi(N), \quad N\ge 1 \\
\dim(\mathcal M_{1,prim}^- (\bZ/N\bZ)\otimes \bQ) & = \begin{cases} 0  & N=2, \\ \phi(N)/2 &  N\ge 3.\end{cases}
\end{align*}

\item 
well-known dimensions of the spaces of cusp-forms for $\Gamma_1(N)$, which are given by closed formulas in $N$, e.g., 

\

\centerline{\hskip 0.9cm
\small{
\begin{tabular}{c|c|c|c|c|c|c|c|c|c|c|c|c|c|c|c|c|c|c}
$N$  & ...& 11 &12 &13 &14& 15  &16 & 17 &18&19&20&\dots &180 &181\\
\hline
\hline
\rule{0pt}{3ex}
 &  0 & 1& 0& 2& 1& 1 & 2& 5 &2 & 7 & 3 &\dots & 705  & 1276 \\
\end{tabular}
}
}

\

\item somewhat
mysterious dimensions in the case $n=3$, e.g, 

\

\centerline{
\small{
\begin{tabular}{c|c|c|c|c|c|c|c|c|c|c|c|c|c|c|c|c|c|c}
$N$  & 43 &   51 &52 & 59 & 63& 67  & 68 & 72 & 73 & 75 & \dots  & 239 & 240 \\
\hline
\hline
\rule{0pt}{3ex}
         & 1  &1      & 1 & 1   & 2  &2       & 1 & 1 & 8 & 4& \dots &   3& 22 \\
\end{tabular}
}
}

\end{itemize}

\begin{exam}
\label{exam:explicit}
Our conjectures would imply that
$$
\dim( \mathcal M_n(\bZ/3^{n-1 } \bZ) \otimes \bQ) =1, \quad n\ge 1,
$$
coming from the term
$$
\mathcal M_{1,prim}(\bZ/1 \bZ) \otimes  \underbrace{ \mathcal M_{1,prim}^-(\bZ/3\bZ)\otimes \cdots \otimes \mathcal M_{1,prim}^-(\bZ/3\bZ)}_{(n-1) \text {times} }.
$$
The co-multiplication maps $\Delta, \Delta^-$ give homomorphisms 
$$
\Hom(\mathcal M_{n_1}^{(-)}(G), \bQ)\otimes \Hom(\mathcal M_{n_2}^{(-)}(G), \bQ) \ra \Hom(\mathcal M_{n}^{(-)}(G), \bQ).
$$
Using explicit nonzero elements 
$$
(\langle 0\rangle\mapsto 1 ) \in \Hom(\mathcal M_{1}(\bZ/1\bZ), \bQ), 
$$
$$    
(\langle \pm 1 \text{ mod } 3 \rangle^- \mapsto  \pm 1 )\in  \Hom(\mathcal M_{1}^-(\bZ/3\bZ), \bQ)
$$
we obtain an explicit functional on 
$\mathcal M_n(\bZ/3^{n-1 }\bZ)$ which maps 
$$
\langle 1 \text{ mod } 3^{n-1}, 3  \text{ mod } 3^{n-1}, \ldots , 3^{n-1} \text{ mod } 3^{n-1} \rangle \mapsto 1,
$$
hence is nonzero. In particular, we have 
$$
\dim(\mathcal M_{n}(\bZ/3^{n-1}\bZ)\otimes \bQ)\ge 1.
$$
Similarly, one can show that
$$
\dim(\mathcal M_{n}(\bZ/2^{n-1}\bZ)\otimes \mathbb F_2)\ge 1,
$$
Thus we obtain explicit nontrivial invariants for equivariant birational actions of $G=\bZ/3^{n-1}\bZ$ on $n$-dimensional varieties. 
Surprisingly, experiments show that 
the nontrivial invariant in 
$\Hom( \mathcal M_{n}(\bZ/2^{n-1}\bZ), \mathbb F_2)$ lifts to the trivial element in 
$\Hom( \mathcal B_{n}(\bZ/2^{n-1}\bZ), \mathbb F_2)$, for $n=2,3,4,5$.  
\end{exam}

Experiments suggest that 
$$
\dim(\mathcal M_n(\bZ/N\bZ) \otimes \bQ)=0, \quad  \text{ for all } N < 3^{n-1}
$$
and 
$$
\dim(\mathcal M_n(\bZ/N\bZ) \otimes \mathbb F_2)=0, \quad \text{ for all } N < 2^{n-1}.
$$
Moreover, 
$$
\dim(\mathcal B_n(\bZ/N\bZ) \otimes \mathbb F_2)=0,\, \quad \text{ for all }  \begin{cases} N < 2^{n}-1 & n=2,3, \\  N < 2^{n-1} & n\ge 4. \end{cases}
$$

 \section{Hecke operators}
 \label{sect:hecke}

In this section, we define analogs of Hecke operators on $\mathcal M_n(G)$. 
Fix a prime $\ell$ not dividing $\# G$ and an integer $1\le r\le n-1$. 
Put 
\begin{equation}
\label{eqn:hecke}
T_{\ell,r} (\psi(\mathbf L, \chi, \Lambda)):= 
\sum_{\mathbf L\subset \mathbf L' \subset \mathbf L\otimes \bR,  \, 
\mathbf L'/\mathbf L\simeq (\bZ/\ell\bZ)^r} \, \, \psi(\mathbf L', \chi, \Lambda),
\end{equation}
where $\chi$ is now interpreted as an element of $\mathbf L'\otimes A$, via inclusion 
$$
\mathbf L \otimes A \subset \mathbf L' \otimes  A,
$$
the surjectivity property for $\chi\in \mathbf L'\otimes A$ follows from the surjectivity of $\chi\in \mathbf L\otimes A $ and the assumption on 
coprimality of $\ell$ and the order of $G$. 

\begin{prop}
\label{prop:hecke}
The Hecke operators $T_{\ell, r}$ are well-defined on $\mathcal M_n(G)$, and commute with each other. 
\end{prop}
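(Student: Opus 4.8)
The plan is to verify two things: (a) that $T_{\ell,r}$ respects the defining relations (S) and (M) of $\mathcal M_n(G)$, so that it descends from the free module on symbols $\psi(\mathbf L,\chi,\Lambda)$ to a well-defined operator; and (b) that $T_{\ell,r_1}$ and $T_{\ell',r_2}$ commute for all admissible primes and indices. For (a), compatibility with (S) is immediate, since the formula $\eqref{eqn:hecke}$ does not refer to the choice of basis spanning $\Lambda$ — it is written invariantly in terms of the lattice $\mathbf L$, the cone $\Lambda\subset\mathbf L_{\bR}$, and $\chi\in\mathbf L\otimes A$. For compatibility with (M), I would use the geometric reformulation from Section~\ref{sect:gen-rel}: by Lemma~\ref{lemm:chi} it suffices to check the two-term subdivision relation $(\mathrm R_2)$, i.e. $\psi(\mathbf L,\chi,\Lambda)=\psi(\mathbf L,\chi,\Lambda_1)+\psi(\mathbf L,\chi,\Lambda_2)$ where $\Lambda=\Lambda_1\cup\Lambda_2$ is the star subdivision replacing $e_1$ (resp.\ $e_2$) by $e_1+e_2$. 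Applying $T_{\ell,r}$ to both sides, the key observation is that the index set of the sum — overlattices $\mathbf L'\supset\mathbf L$ with $\mathbf L'/\mathbf L\simeq(\bZ/\ell\bZ)^r$ — depends only on $\mathbf L$, not on the cone; and for each fixed $\mathbf L'$ the subdivision $\Lambda=\Lambda_1\cup\Lambda_2$ is still a valid union of cones with disjoint interiors inside $\mathbf L'_{\bR}$. Hence the relation $\psi(\mathbf L',\chi,\Lambda)=\psi(\mathbf L',\chi,\Lambda_1)+\psi(\mathbf L',\chi,\Lambda_2)$ holds in $\mathcal M_n(G)$ term-by-term (using the extended, not-necessarily-basic-cone description of $\mathcal M_n(G)$ recorded at the end of Section~\ref{sect:gen-rel}), and summing over $\mathbf L'$ gives $T_{\ell,r}$ applied to $(\mathrm R_2)$. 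This establishes (a).

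For (b), I would compute the composition $T_{\ell',r_2}\circ T_{\ell,r_1}$ directly from $\eqref{eqn:hecke}$: it sends $\psi(\mathbf L,\chi,\Lambda)$ to $\sum \psi(\mathbf L'',\chi,\Lambda)$, where the sum is over chains $\mathbf L\subset\mathbf L'\subset\mathbf L''$ with $\mathbf L'/\mathbf L\simeq(\bZ/\ell\bZ)^{r_1}$ and $\mathbf L''/\mathbf L'\simeq(\bZ/\ell'\bZ)^{r_2}$. When $\ell\ne\ell'$, the $\ell$-primary and $\ell'$-primary parts of $\mathbf L''/\mathbf L$ are independent, so such a chain is the same datum as a chain $\mathbf L\subset\tilde{\mathbf L}'\subset\mathbf L''$ with $\tilde{\mathbf L}'/\mathbf L\simeq(\bZ/\ell'\bZ)^{r_2}$ and $\mathbf L''/\tilde{\mathbf L}'\simeq(\bZ/\ell\bZ)^{r_1}$ (namely $\tilde{\mathbf L}'$ is the preimage of the $\ell'$-part); this gives commutativity immediately. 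When $\ell=\ell'$, one reduces to a statement purely about the building for $\mathrm{GL}_n(\bZ_\ell)$ — the claim that the classical Hecke operators $T_{\ell,r_1}$ and $T_{\ell,r_2}$ (double cosets of $\mathrm{GL}_n(\bZ_\ell)$ with elementary divisors a block of $r_1$, resp.\ $r_2$, entries equal to $\ell$) commute in the spherical Hecke algebra. This is the standard fact that the $\ell$-adic Hecke algebra of $\mathrm{GL}_n$ is commutative (it is the symmetric-function / Satake side), and the counting identity it encodes is that the number of intermediate lattices realizing a given pair of quotient types is symmetric in $r_1,r_2$. I would either cite this or prove the needed symmetry by the Satake isomorphism / by an explicit double-coset decomposition.

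The main obstacle I expect is the $\ell=\ell'$ case of commutativity: unlike $\ell\ne\ell'$, the two filtration steps interact, and one genuinely needs the structure of the local Hecke algebra at $\ell$ rather than a bare bijection of index sets. The cleanest route is probably to identify $\eqref{eqn:hecke}$ with the action of the characteristic function of the double coset $\mathrm{GL}_n(\bZ_\ell)\,\mathrm{diag}(\ell,\dots,\ell,1,\dots,1)\,\mathrm{GL}_n(\bZ_\ell)$ on a space of functions/symbols on the quotient $\mathrm{GL}_n(\bQ_\ell)/\mathrm{GL}_n(\bZ_\ell)$ (compatibly with the extra data $\chi,\Lambda$, which are inert under these Hecke correspondences because $\ell\nmid\#G$ and the cone is only subdivided, never moved), and then invoke commutativity of the spherical Hecke algebra. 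Everything else is bookkeeping.
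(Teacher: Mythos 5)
Your proposal is correct, and for well-definedness it is precisely the fleshed-out version of the paper's one-line proof ("follows from the additivity of \eqref{eqn:Chi} and \eqref{eqn:hecke}"): for each overlattice $\mathbf L'$ the cone subdivision $\Lambda=\Lambda_1\cup\Lambda_2$ is unchanged, so summing \eqref{eqn:Chi} over $\mathbf L'$ commutes with $T_{\ell,r}$, and invoking Lemma~\ref{lemm:chi} to reduce to $(\mathrm R_2)$ is exactly the right economy. Where you genuinely add something is in the treatment of commutativity: the paper's one-liner does not really address it, whereas you correctly isolate the two cases. For $\ell\neq\ell'$ your bijection between chains $\mathbf L\subset\mathbf L'\subset\mathbf L''$ with the two quotient types in either order is elementary and complete. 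For $\ell=\ell'$ your reduction to the commutativity of the local spherical Hecke algebra for $\mathrm{GL}_n(\bQ_\ell)$ — after noting that $\chi$ and $\Lambda$ are inert under the correspondence because $\ell\nmid\#G$ and the cone data never changes — is the standard and correct argument. One small alternative worth mentioning: instead of citing Satake/Gelfand, one can prove directly the symmetry of the lattice count (the number of $\mathbf L'$ with $\mathbf L\subset\mathbf L'\subset\mathbf L''$, $\mathbf L'/\mathbf L\simeq(\bZ/\ell)^{r_1}$ and $\mathbf L''/\mathbf L'\simeq(\bZ/\ell)^{r_2}$ depends only on the elementary divisor type of $\mathbf L''/\mathbf L$, symmetrically in $r_1,r_2$) by Smith normal form over $\bZ_\ell$ and the transpose involution on lattices; this is essentially Gelfand's trick and keeps the proof self-contained. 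Either route is fine; your proposal has no gaps.
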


\begin{proof} 
Follows from the additivity of \eqref{eqn:Chi} and \eqref{eqn:hecke}.
\end{proof}

 \begin{exam}
 \label{exam:mot}
We consider the case $n=2$ and $G=\bZ/N\bZ\simeq A$. 
Then  $ \mathcal M_n(G)$ is generated by 
$$ 
\langle a_1,a_2\rangle , \quad a_1,a_2 \in \bZ/N\bZ,  \quad \gcd(a_1,a_2,N)=1,
$$ 
 such that 
 \begin{itemize}
 \item 
 $ \langle a_1,a_2\rangle = \langle a_2,a_1\rangle$,
 \item 
 $ \langle a_1,a_2\rangle = \langle a_1,a_2-a_1\rangle + \langle a_1-a_2,a_2\rangle$, for all $a_1,a_2$. 
 \end{itemize}

We write down an example of a Hecke operator on $\mathcal M_2(G)$. For each $\ell$ coprime to $N$ we have only one 
Hecke operator $T_{\ell}=T_{\ell,1}$. 

Assume that $N$ is odd and $\ell=2$. Let 
$$
\mathbf L=\bZ^2, \quad \chi= (1,0)\otimes a_1 + (0,1)\otimes a_2, \quad  \Lambda = \bR_{\ge 0}^2,
$$
the standard octant. There are three overlattices of $\mathbf L$ of index 2, corresponding to the three elements of $\bP^1(\mathbb F_2)$:
\begin{itemize}
\item $\mathbf L_0':=\bZ \cdot (\frac{1}{2}, 0) + \bZ\cdot (0,1),$
\item $\mathbf L_1':=\bZ \cdot (\frac{1}{2}, \frac{1}{2}) + \bZ\cdot (0,1) =\bZ \cdot (\frac{1}{2}, \frac{1}{2}) + \bZ\cdot (1,0),$
\item $\mathbf L_\infty':=\bZ \cdot (0,\frac{1}{2}) + \bZ\cdot (1,0)$.
\end{itemize}
The corresponding cones in the first and third case are basic simplicial, whereas in the second case it is not basic and can be 
decomposed in the union of two basic simplicial cones, with respect to $\mathbf L_1'$: 
$$
\Lambda=\Lambda_1\cup \Lambda_2,
$$
$$
\Lambda_1 =\bR_{\ge 0} \cdot (1,0) + \bR_{\ge 0} \cdot (1,1), \quad  
\Lambda_2 =\bR_{\ge 0} \cdot (1,1) + \bR_{\ge 0} \cdot (0,1).
$$
Therefore, 
\begin{equation*}
\label{eqn:hecke}
T_2 (\langle a_1,a_2\rangle ) = \langle 2a_1,a_2\rangle +  
\big(\langle a_1-a_2,2a_2\rangle + \langle 2a_1,a_2-a_1\rangle\big)
 + \langle a_1,2a_2\rangle.
\end{equation*}
The middle term follows from equalities
$$
e_1\otimes a_1 + e_2\otimes a_2 \!=\! e_1\otimes (a_1-a_2) + \frac{e_1+e_2}{2} \otimes 2a_2 \!= \!
\frac{e_1+e_2}{2} \otimes 2a_1 + e_2\otimes (a_2-a_1).
$$
We leave it as an exercise to write down a similar formula for the action of  
$T_3$ on $\mathcal M_2(G)$ and $T_2$ on $\mathcal M_3(G)$. 
 \end{exam}

 To define Hecke operators  $T^*_{\ell,r}$ in the co-vector version, we consider 
 sublattices $\mathbf L'\subset \mathbf L$, of index $\ell^r$, such that the quotient is isomorphic to 
$(\bZ/\ell \bZ)^r$. 
In particular, $T^*_2=T^*_{2,1}$ on $\mathcal M_2^*(G)$ is given by 
$$
T_2^*( [a_1,a_2]^*) = [2a_1,a_2]^* + [2a_1,a_1+a_2]^* + [a_1+a_2, 2a_2]^*+ 
[a_1,2a_2]^*
$$
and $T_{2,1}^*$ on $\mathcal M_3(G)$ by 
$$
T_{2,1}^*( [a_1,a_2,a_3]^*) = 
[2a_1,a_2,a_3]^* + [a_1,2a_2,a_3]^* + [a_1,a_2, 2a_3]^*+
$$
$$
+ [2a_1,a_1+a_2,a_3]^* + [a_1+a_2,2a_2,a_3]^* +
 $$
 $$
 + [a_1,2a_2, a_2+a_3]^*+ [a_1,a_2+a_3,2a_3]^* +
$$
$$
+ [2a_1,a_2,a_1+a_3]^* + [a_1+a_3,a_2,a_3]^*  + 
 $$
 $$
 + [2a_1,a_1+a_2, a_1+a_3]^*
 + [a_1+a_2,2a_2, a_2+a_3]^*  + [a_1+a_3,a_2+a_3, 2a_3]^*+ 
 $$
 $$
+ [a_1+a_2,a_2+a_3, a_1+a_3]^*.
 $$ 
  
 \begin{rema}
The groups $\mathcal M_{n,prim}(G)^-$ defined in \eqref{eqn:prim} are preserved under the action of Hecke operators. 
 \end{rema}

\

 \centerline{\bf Part 2}
 
 \section{Refined birational invariants} 
 \label{sect:bira}
 
 There is a refinement of $\mathcal B_n(G)$, connecting it to the Burnside group of varieties considered in \cite{KT}. 
 Let $K$ be an algebraically closed field of characteristic zero. 
 Let
 $$
 \Bir_{n-1,m}(K), \quad 0\le m\le n-1,
 $$
 be the set of equivalence classes of $(n-1)$-dimensional irreducible varieties over $K$, 
 modulo $K$-birational equivalence,
 which are $K$-birational 
 to products $W\times \mathbb A^{m}$, and not to $W'\times \mathbb A^{m+1}$, for any $W'$. 
Let
 $$
 \mathcal B_n(G,K) := \oplus_{m=0}^{n-1} \oplus _{[Y] \in \Bir_{n-1,m}(K)} \mathcal B_{m+1}(G), 
 $$
 with
 $$
 \mathcal B_1(G) = \begin{cases} 
 \oplus_{a\in (\bZ/N\bZ)^\times}\,\, \bZ & \text{ if } G=\bZ/N\bZ, \, N\ge 2, \\
0 							  & \text{ if $G$ is not cyclic}.
\end{cases}
 $$
Let $X$ be an irreducible $K$-variety with a generically free action of $G$. As in Section~\ref{sect:intro}, we may assume
 that $G$ acts regularly; let 
 $$
 X^G = \sqcup_{\alpha} F_{\alpha}
 $$
 be the decomposition of the fixed point locus into irreducible, disjoint, components.  
 The spectrum for the $G$-action in the tangent space to $X$ at any point $x_{\alpha} \in F_{\alpha}$ is given by
 $$
 a_1,\ldots, a_{n-\dim(F_{\alpha})}, \underbrace{0, \ldots }_{\dim(F_{\alpha})}, \quad a_i\neq 0.
 $$
 
 Define
 $$
 \beta_K(X) \in  \mathcal B_n(G,K)
 $$
by taking into account the birational types of fixed loci under $G$, as follows:
write
$$
Y_{\alpha}: =  F_{\alpha} \times \bA^{n-1-\dim(F_{\alpha})}
$$
and let $m_{\alpha}\in \bZ_{>0}$ be the maximal integer such that 
$$
Y_{\alpha}\sim Z_{\alpha}\times \mathbb A^{m_{\alpha}}, 
$$ 
 clearly, 
 $$
 m_{\alpha}\ge n-1-\dim(F_{\alpha}).
 $$ 
 Then 
 $$
 \beta_K(X) = \sum_{\alpha}\beta_{\alpha}(X),
 $$
where
$$
\beta_{\alpha}(X) = [a_1,\ldots, a_{n-\dim(F_{\alpha})},  
\underbrace{0, \ldots }_{n-1-m_{\alpha}}] \in \text{ copy of } \mathcal B_{m_{\alpha}+1}(G), 
$$
labeled by the birational type of $Y_{\alpha}$. 

The invariance under blowups follows from the fact that all $(n-1)$-dimensional birational types   arising as labels in each particular subcase of  the proof of Theorem~\ref{thm:main}
 coincide with each other.
 
 \section{Hecke operators: variants}
 \label{sect:variants}
 
Let $G$ be a finite abelian group and $A$ its group of characters. 
Another variant concerns coefficients. It works both for the vector and co-vector versions.
For simplicity, we consider symbols with coefficients in $\bQ$. 
Consider an irreducible algebraic representation 
$$
\rho_{\lambda} : \mathrm{GL}_n(\bQ)\ra \Aut(\mathsf V_{\lambda}),
$$
with highest weight 
$$
\lambda = (\lambda_1\le  \ldots \le  \lambda_n), \quad \lambda_i\in \bZ.
$$

The representation $\rho_{\lambda}$ 
defines
 a functor from the groupoid of $n$-dimensional $\bQ$-vector spaces to the 
category $\mathrm{Vect}_{\bQ}$ of all $\bQ$-vector spaces, which we denote by the same letter. 
In particular, for any lattice $\mathbf L$ of rank $n$ we can speak of
$$
\rho_{\lambda}(\mathbf L\otimes \bQ) \in \mathrm{Vect}_{\bQ}.
$$
For example, if $\rho_{\lambda}$ is the $m$-th symmetric power $\Sym^m(V)$ of the standard representation, i.e., 
$\lambda= (0,\ldots, 0, m)$, then  
$$
\rho_{\lambda} (\mathbf L\otimes \bQ) = \Sym^m(\mathbf L\otimes \bQ).
$$

Consider the $\bQ$-vector space
$$
\mathcal M_n(G, \rho_{\lambda})
$$
generated by 
symbols
$$
\psi(\mathbf L, \chi, \Lambda, v),
$$
on isomorphism classes of quadrupels, where
$\mathbf L, \chi, \Lambda$ are as in Section~\ref{sect:hecke} and 
$$
v\in  \rho_{\lambda} (\mathbf L\otimes \bQ),
$$ 
subject to relations
\begin{itemize}
\item 
$\psi(\mathbf L, \chi, \Lambda, v_1+v_2)= \psi(\mathbf L, \chi, \Lambda, v_1) + \psi(\mathbf L, \chi, \Lambda, v_2)$,

\

\item 
$
\psi(\mathbf L, \chi, \Lambda, v) = \sum_{i=1}^k \psi(\mathbf L, \chi, \Lambda_i, v),
$ for any  decomposition 
$$
\Lambda= \Lambda_1\cup \cdots \cup \Lambda_k.
$$
\end{itemize}
Here, one can assume that subcones $\Lambda_i$ are basic simplicial and that the decomposition is standard, as in 
Section~\ref{sect:hecke}, or simply that $\Lambda_i$ are finitely-generated rational subcones of full dimension, with 
disjoint interiors. 
The action of Hecke operators on  
$\mathcal M_n(G, \rho_{\lambda})$ is defined as in \eqref{eqn:hecke}. 

The co-vector version of this construction is straightforward. 

\begin{rema}
\label{rema:hope}
We expect that for $n=2$, $G=\bZ/N\bZ$, and $\rho_{\lambda}$ given by the $m$-th symmetric power, 
the $\bQ$-vector spaces $\mathcal M_n(G, \rho_{\lambda})$, endowed with the action of Hecke operators $T_{\ell,r}$,
are related to modular forms of weight $(m+2)$, for the congruence subgroup $\Gamma_1(N)$. 
\end{rema}

\section{Algebraic versions of automorphic forms}
\label{sect:alge}

A further generalization of results in Section~\ref{sect:variants} takes place in the following context. 
Let $\mathsf G$ be a connected reductive group over $\bQ$. There is a notion of admissible 
Harish-Chandra modules  $\mathcal E$  for $\mathsf G(\bR)$: these are $\bC$-vector spaces of countable dimension, endowed with an action of 
the maximal compact subgroup $\mathsf  K\subset \mathsf G(\bR)$ and a compatible 
action of the complexified Lie algebra $\mathfrak g_{\bC}=\mathrm{Lie}(G)\otimes \bC$. The action of $\mathsf K$ 
decomposes $\mathcal E$ as a countable sum of finite-dimensional representations of 
$\mathsf K$, each appearing with finite multiplicity. 
We assume that the center $\mathfrak  z \subset \mathfrak U(\mathfrak g)$ 
acts by scalars, called the central character of $\mathcal E$.  
The group $\mathsf G(\bR)$ acts on the Schwartz completion of $\mathcal S(\mathcal E)$.
Let $\mathcal S(\mathcal E)'$ be the {\em continuous} dual space, it is a subspace of the {\em algebraic} dual space $\mathcal E^\vee$. 
The
 congruence subgroups of $\mathsf G(\bQ)$ have finite-dimensional invariants in   $\mathcal S(\mathcal E)'$. One can view the theory of automorphic forms as the study 
of these finite-dimensional spaces of invariants, together with the action of a Hecke algebra. Note that in the last
step we consider $\mathcal S(\mathcal E)'$ only as a $\mathsf G(\bQ)$-module, and not as a $\mathsf G(\bR)$-module.  

Almost all automorphic forms are not related to motives or Galois representations; the part relevant for number theory (called {\em algebraic} automorphic  forms) is specified by a certain integrality constraint on the central character.

Returning to considerations above, we see that we can imitate the theory of automorphic forms, with representations of $\mathsf G(\bQ)$ in $\mathcal S(\mathcal E)'$, by a {\em different} class of representations 
of $\mathsf G(\bQ)$, defined over $\bQ$. Assume that $\mathsf G=\mathrm{GL}_n$, over $\bQ$.
Let 
\begin{equation}
\label{eqn:f}
\mathcal F_n = \left<  \mathcal X_{\Lambda}  \right>_{\otimes \bQ} = \mathcal F_{\mathbf L, \bZ}\otimes \bQ,\quad \text{ for } \mathbf L=\bZ^n, 
\end{equation}
be 
the $\bQ$-vector space generated by 
characteristic functions $\mathcal X_{\Lambda}$ of 
convex finitely generated rational polyhedral cones $\Lambda\subset  \bR^n$, modulo 
functions with support of dimension $\le (n-1)$. 
Note that 
$$
\mathcal F_n  \subset \mathrm L_{\infty}(\bR^n),
$$
the space of bounded measurable functions. Clearly, $\mathsf G(\bQ) = \mathrm{GL}_n(\bQ)$ acts on 
$\mathcal F_n$. 
Let 
$$
\rho=\rho_{\lambda} : \mathrm{GL}_n(\bQ)\ra \Aut(\mathsf V_{\lambda}) 
$$
be a finite-dimensional irreducible representation as above.
Let 
$$
\Gamma\subset \mathrm{GL}_n(\bQ)
$$ 
be an arithmetic subgroup. 
The spaces of invariants, respectively, coinvariants
\begin{equation}
\label{eqn:heckeco}
H^0(\Gamma, \mathcal F_n^{\vee} \otimes \mathsf V_{\lambda}^\vee) =
(\mathcal F_n^{\vee} \otimes \mathsf V_{\lambda}^\vee) ^{\Gamma}, \quad \quad 
H_0(\Gamma, \mathcal F_n \otimes \mathsf V_{\lambda}) =
(\mathcal F_n \otimes \mathsf V_{\lambda})_{\Gamma},
\end{equation}
are dual to each other finite-dimensional spaces, since the module of characteristic functions is finitely-generated over the group ring
of the arithmetic subgroup $\Gamma$. 

For example, for $n\ge 2$, if $\rho$ is the trivial representation, and $\Gamma\subset \mathrm{GL}_n(\bZ)=\mathrm{Aut}(\mathbf L)$ is the stabilizer of the vector 
$$\chi=(1,0,0,\ldots)\in \mathbf L \otimes \bZ/N\bZ$$
then the group of coinvariants is  (up to torsion) our group $\mathcal M_n(\bZ/N\bZ)$. Similarly, by taking the stabilizer of the coordinate co-vector modulo $N$, we obtain the co-vector version
 $\mathcal M_n^*(\bZ/N\bZ)$.

More generally, for any finite abelian group $G$  with character group $A$ such that $G$ can be generated by at least $n$ elements let us choose an
element 
$$
\chi  \in \mathbf{L} \otimes A, \quad \mathbf{L} =  \bZ^n,
$$
such that the induced homomorphism
$
\mathbf{L}^\vee\ra A 
$
is surjective. 
We define
$$
\Gamma(G,n)\subset \mathrm{GL}_n(\bZ)
$$
as the stabilizer of $\chi$. Note that the conjugacy class of the stabilizer does not depend on the choice of $\chi$. 
Then, for $n\ge 2$ and such that $G$ is generated by at most $n$ elements, we have 
\begin{equation}
\label{eqn:fn}
\mathcal M_n(G)\otimes \bQ = H_0(\Gamma(G,n), \mathcal F_n).
\end{equation}

A key observation is that $\mathcal F_n$ is a $\mathrm{GL}_n(\bQ)$-module
which is {\em finitely generated} as $\mathrm{GL}_n(\bZ)$-module; moreover, 
\begin{equation}
\label{eqn:res}
\mathrm{Res}^{\mathrm{GL}_n(\bQ)}_{ \mathrm{GL}_n(\bZ)} (\mathcal F_n)\in 
\mathrm{Perf}(\bQ[\mathrm{GL}_n(\bZ)]-\mathrm{mod}),
\end{equation}
i.e., 
$\mathcal F_n$, considered as a  $\mathrm{GL}_n(\bZ)$-module, 
admits a finite-length resolution by finitely-generated projective 
modules over the group ring of $\mathrm{GL}_n(\bZ)$ (see Proposition~\ref{prop:stein}).

\begin{ques}
\label{ques:maybe}
Are there other interesting $\mathrm{GL}_n(\bQ)$-modules which are finitely-generated as $\mathrm{GL}_n(\bZ)$-modules, or more
strongly, belong to  
$$
\mathrm{Perf}(\bQ[\mathrm{GL}_n(\bZ)]-\mathrm{mod})?
$$
\end{ques}

An even more general question would be to find a bounded from above complex of representations of $\mathsf G(\bQ)$ which, after restriction to $\mathsf G(\bZ)$, is quasi-isomorphic to a complex of finitely-generated projective modules over the group ring.

Both $\bQ$-vector spaces in \eqref{eqn:heckeco} carry actions of Hecke operators, which have algebraic eigenvalues in these spaces. 
By \eqref{eqn:res}, 
$$
\dim(H_i(\Gamma, \mathcal F_n\otimes \mathsf V_{\lambda}))< \infty, \quad \text{ for all } i\ge 0, 
$$
and the spaces, for $i\ge 1$, also carry actions of Hecke operators with algebraic eigenvalues.

We will see below that our representation $\mathcal F_n$ falls into a well-studied subclass of {\em cohomological} automorphic forms, i.e., those realized in cohomology of 
arithmetic groups with coefficients in finite-dimensional representations $\rho$.

Recall the definition of {\em Steinberg} modules: Let $V/\bQ$ be a $\bQ$-vector space of dimension $n\ge 0$, and $\mathcal T_n$ 
the simplicial complex of flags of $\bQ$-vector subspaces of $V$, i.e., the geometric realization of the poset of nontrivial subspaces in $V$. 
Put
$$
\mathrm{St}(V):= \begin{cases}  H_{n-2}(\mathcal T_n,\bZ) & n\ge 3 \\
                                 \text{$\bZ$-combinations of lines in $V$ with total weight 0}             & n=2 \\
                                 	\bZ & n=0, 1.
\end{cases}
$$  

This is a representation of $\mathrm{Aut}(V)$, which we denote by $\mathrm{St}_n$ for $V=\bQ^n$.
One of the roles of the Steinberg module is as a dualizing module, in  the sense that 
$$
H_i(\mathrm{SL}_n(\bZ),\mathrm{St}_n\otimes M)=H^{n(n-1)/2-i}(\mathrm{SL}_n(\bZ),M),
$$
for any representation $M$ of $\mathrm{SL}_n(\bZ)$ with coefficients in $\bQ$.

Let 
$$
\mathcal F(V) = \mathcal F_n,
 $$
 as  in \eqref{eqn:f}, 
 where the identification depends on the choice of a basis of $V$, different choices are related by the action of $\mathrm{G}_n(\bQ)$ on $\mathcal F_n$. 
It has a filtration by submodules
$$
0\subset \mathcal F^{\le 0}(V)\subset \mathcal F^{\le 1}(V)\subset \cdots \subset \mathcal F^{\le n}(V)=\mathcal F(V),
$$
where $\mathcal F^{\le i}(V)$ are generated by functions pulled back from quotient spaces of dimension $i$. In particular, 
$$
\mathcal F^{\le 0}(V)=\bZ = \{ \text{constant $\bZ$-valued functions on $V$} \} .
$$

The following fact is presumably well-known: 

\begin{prop}
\label{prop:stein}
$$
\mathrm{gr}^i(\mathcal F(V))=\oplus_{V\twoheadrightarrow V', \dim(V')=i} \,\,\,  \mathrm{St}(V')\otimes or(V'),
$$
where $or(V')$ is the 1-dimensional  $\bZ$-module of orientations of $V'$, i.e., $\mathrm{GL}(V')$ acts via the sign 
of the determinant.
\end{prop}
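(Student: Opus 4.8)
The plan is to prove the isomorphism $\mathrm{GL}(V)$-equivariantly in two stages: a reduction to the ``top'' graded piece of $\mathcal F$ of a vector space of the right dimension, and then the identification of that top piece with $\mathrm{St}\otimes or$, which is essentially the Solomon--Tits theorem. Throughout I use the description of $\mathcal F(V)=\mathcal F_n$ from \eqref{eqn:f} as finite combinations of characteristic functions $\mathcal X_\Lambda$ of rational polyhedral cones $\Lambda\subset V_{\mathbb R}$ modulo those supported in dimension $\le n-1$, with defining relations given by inclusion--exclusion along subdivisions. In these terms $\mathcal F^{\le i}(V)$ is the span of those $\mathcal X_\Lambda$ whose cone has lineality space of dimension $\ge n-i$ (equivalently, is pulled back from the quotient by the lineality space), so a class contributing nontrivially to $\mathrm{gr}^i(\mathcal F(V))$ comes from cones whose lineality space has dimension exactly $n-i$, hence determines a quotient $V\twoheadrightarrow V'$ with $\dim V'=i$.

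First I would establish the reduction. For each quotient $\pi\colon V\twoheadrightarrow V'$ with $\dim V'=i$, pullback of cones gives $p_\pi^{*}\colon\mathcal F(V')\to\mathcal F^{\le i}(V)$ carrying $\mathcal F^{\le i-1}(V')$ into $\mathcal F^{\le i-1}(V)$, hence inducing $\bar p_\pi^{*}\colon\mathcal F(V')/\mathcal F^{\le i-1}(V')\to\mathrm{gr}^i(\mathcal F(V))$; the sum over all such $\pi$ is surjective since $\mathcal F^{\le i}(V)$ is generated by these pullbacks. A splitting is provided by operators $r_W\colon\mathcal F(V)\to\mathcal F(V/W)$, one for each codimension-$i$ subspace $W\subset V$, recording the cylinder that a cone $\Lambda$ looks like near the generic point of $W$: this depends only on germs along $W$, so it is well defined on all of $\mathcal F(V)$, it returns $\mathcal X_{\pi(\Lambda)}$ when $W=\mathrm{lin}(\Lambda)$, and it lands in $\mathcal F^{\le i-1}(V/W)$ otherwise. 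Checking that $\bigoplus_W r_W$ is a left inverse of $\bigoplus_\pi\bar p_\pi^{*}$ identifies $\mathrm{gr}^i(\mathcal F(V))\cong\bigoplus_{\dim V'=i}\mathcal F(V')/\mathcal F^{\le i-1}(V')$, so it remains to treat the top case $\mathcal F(W)/\mathcal F^{\le m-1}(W)$ with $\dim W=m$.

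For $m\le 1$ this is immediate; assume $m\ge 2$. I would define $\Phi\colon\mathcal F(W)/\mathcal F^{\le m-1}(W)\to\mathrm{St}(W)\otimes or(W)$ sending the class of $\mathcal X_\Lambda$, for $\Lambda$ strictly convex and full-dimensional, to the fundamental class of its cellulated boundary sphere $\partial\Lambda\cong S^{m-2}$, pushed into the flag complex $\mathcal T(W)$ by sending each face of $\Lambda$ to its linear span; orienting $\partial\Lambda$ requires an orientation of $W$, which accounts for the twist by $or(W)$ and makes $\Phi$ equivariant. This is well defined: in a subdivision $\Lambda=\Lambda_1\cup\cdots\cup\Lambda_k$ the internal walls occur in two of the $\partial\Lambda_j$ with opposite induced orientation and cancel, while in a subdivision of a half-space the surviving ``equator'' cycle is supported on the closed star of the hyperplane-vertex in $\mathcal T(W)$, which is a cone hence contractible, so that cycle is null-homologous (recording that half-space characteristic functions vanish in the quotient). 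Taking $\Lambda$ to be the coordinate orthant of a basis of $W$ realizes the corresponding apartment class, and apartment classes generate $\mathrm{St}(W)$ (Solomon--Tits), so $\Phi$ is surjective.

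The main obstacle will be injectivity of $\Phi$, which I would obtain by constructing an inverse from a presentation of $\mathrm{St}(W)\otimes or(W)$: over $\mathbb Q$, which is the relevant coefficient ring in \eqref{eqn:f}, it is a standard consequence of the Solomon--Tits theorem that $\mathrm{St}(W)\otimes or(W)$ is generated by symbols $[v_1,\ldots,v_m]$, alternating in the $v_i$, zero when $v_1,\ldots,v_m$ are linearly dependent, subject to the relations $\sum_{i=0}^{m}(-1)^i[v_0,\ldots,\widehat{v_i},\ldots,v_m]=0$ for all $v_0,\ldots,v_m$. Sending $[v_1,\ldots,v_m]$ to the class of $\mathcal X_{\mathbb R_{\ge0}v_1+\cdots+\mathbb R_{\ge0}v_m}$ gives a candidate inverse $\Psi$, and the content is to see that this relation maps to a consequence of the cone relations: when $v_0$ lies in the simplicial cone on $v_1,\ldots,v_m$ it is, after reordering the generators, exactly the subdivision relation $(\mathrm R_m)$ obtained by inserting the ray of $v_0$, whereas when $v_0$ lies outside one must additionally use the vanishing in the quotient of half-space characteristic functions, since the cones indexed by the ``wrong sign'' of the coefficients of $v_0$ reassemble, modulo codimension $\ge 1$ support, into a half-space. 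Carrying out this verification of the Plücker/straightening relation in the presence of the lineality-degenerate classes, together with the bookkeeping in the reduction step (in particular that $r_W$ of a pullback generator lands in the lower filtration step unless $W$ is the relevant lineality space) and the mild integral-versus-rational points in the Steinberg presentation, is where essentially all the work lies; the rest is formal.
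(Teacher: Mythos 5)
Your approach is genuinely different from the paper's. The paper handles the top graded piece $i=n$ analytically: it applies the Fourier transform to characteristic functions of cones, viewed as tempered distributions on $V\otimes\bR$, observes that the image of $\mathcal F^{\le n-1}(V)$ is exactly the set of distributions supported on finitely many hyperplanes, and identifies the quotient with the span of forms $\bigwedge_i dx_i/x_i$ in rational coordinates, invoking this as a known alternative description of $\mathrm{St}(V)\otimes or(V)$; the lower graded pieces are then dismissed with ``the case of deeper terms of the dimension filtration is similar.'' You instead argue combinatorially and topologically: you reduce to the top piece via explicit ``germ along $W$'' splitting operators $r_W$, one for each codimension-$i$ subspace, and you identify the top piece with $\mathrm{St}\otimes or$ via the map $\Phi$ pushing the cellulated boundary sphere of a strictly convex cone into the Tits building, with an inverse $\Psi$ built from the modular-symbol presentation of the Steinberg module. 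Your route is more elementary (no distribution theory), makes the descent through the filtration explicit where the paper only gestures at it, and matches the subdivision relations $(\mathrm R_k)$ directly against the Steinberg straightening relation, which ties in nicely with Lemma~\ref{lemm:chi}. What it costs is that the straightening verification and the half-space vanishing now have to be done by hand, whereas the Fourier argument sidesteps both.

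Three points to tighten before this is a proof. (i) As written, $\Psi$ is ill-defined: $[v_1,\ldots,v_m]$ is alternating in $\mathrm{St}(W)\otimes or(W)$ while $\mathcal X_{\Lambda}$ depends only on the unordered set of rays, so you must set $\Psi([v_1,\ldots,v_m])=\varepsilon(v_1,\ldots,v_m)\cdot\mathcal X_\Lambda$, with $\varepsilon=\pm 1$ the sign of the ordered basis relative to a chosen generator of $or(W)$; this is precisely the point where the $or(W)$-twist enters, and it is what makes $\Psi$ match $\Phi$. (ii) The operator $r_W$ needs a sentence explaining why it descends to $\mathcal F(V)$ and respects the filtration: since elements of $\mathcal F(V)$ are honest functions modulo sets of dimension $\le n-1$, and subdivision relations hold literally away from such sets, taking germs along a generic point of $W$ is well defined, and $r_W(\mathcal X_\Lambda)$ lands in the lower filtration step unless $W=\mathrm{lin}(\Lambda)$. (iii) The two claims you flag as the crux --- that the equator cycle arising from a subdivided half-space is null-homologous because it lies in the closed star of the hyperplane vertex, and that the ``wrong-sign'' cones in the straightening relation reassemble into a half-space modulo lower-dimensional support --- are exactly right in spirit but are asserted rather than proved; they are the whole content of the injectivity/well-definedness argument and need to be written out.
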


\begin{proof}
Let us first prove that 
$$
\mathrm{gr}^n(\mathcal F(V))=\mathcal F(V)/\mathcal F^{\le n-1}(V)
$$ 
is isomorphic to 
$$
 \mathrm{St}(V)\otimes or(V).
 $$ 
We apply the Fourier transform to elements of $\mathcal F(V)$ viewed as distributions with moderate growth on $V\otimes\bR\simeq \bR^n$. 

For example, the Fourier transform of the characteristic function of the standard coordinate octant $(\bR_{\ge 0})^n$
is equal to the distribution 
$$
\prod_{i=1}^n(\sqrt{-1} \, v.p.(1/x_i)+\pi\delta(x_i))\prod_{i=1}^n|dx_i|
$$ 
with values in volume forms, 
where $v.p.(1/x)$ is the unique odd distribution of homogeneity degree $-1$ on $\bR^1$ equal to $1/x$ on $\bR\setminus 0$.

The image of $\mathcal F^{\le n-1}(V)$ is characterized by the property that the support of the distribution is contained in a finite union of hyperplanes.
Therefore,  the quotient group $\mathcal F(V)/\mathcal F^{\le n-1}(V)$ is  identified with the abelian group generated by volume elements on the dual space $(V\otimes\bR)^\vee$, 
of the form 
$$(\sqrt{-1})^n
|dx_1\wedge\dots\wedge dx_n|/(x_1\cdots x_n),
$$ 
where $x_1,\dots,x_n$ are coordinates in $(V\otimes\bR)^\vee$ in a rational basis. Choosing an orientation of $V$ 
(or, equivalently, of $V^\vee$) and dividing by $(\sqrt{-1})^n$, 
we identify the latter space with top-degree meromorphic  differential forms on the vector space $V^\vee$ considered as an  \emph{algebraic variety} $\bA^n_\bQ$ over $\bQ$ spanned by forms of type $\wedge_{i=1}^n (dx_i/x_i)$ for coordinates in a rational basis. This is an alternative description of the Steinberg module. The case of deeper terms of the dimension filtration is similar.
\end{proof}

This implies that the computation of cohomology with coefficients in $\mathcal F(V)$, tensored with finite-dimensional modules, 
and, in particular, of coinvariants, would reduce to the computation of cohomology for $\mathrm{St}$-modules and their pullbacks from parabolic subgroups.
There is extensive literature on the cohomology of $\mathrm{St}$-modules (see, e.g.,  \cite{ash} and the references therein), but these computations
do not capture the potentially interesting extension data in $\mathcal F(V)$. 

\

To summarize, we have a surjective homomorphism
$$
\mathcal F_n \twoheadrightarrow \mathrm{St}_n\otimes \ori_n, 
$$
where
$$
\ori_n : \mathrm{GL}_n(\bQ)\ra \bQ^\times, \quad \gamma \mapsto  \mathrm{sgn}(\det(\gamma)).
$$
It gives rise to a surjective homomorphism
$$
H_0(\Gamma(G,n), \mathcal F_n) \twoheadrightarrow H_0(\Gamma(G,n), \mathrm{St}_n\otimes \ori_n).
$$

\begin{prop}
There exists a commutative diagram

\

\centerline{
\xymatrix{
H_0(\Gamma(G,n), \mathcal F_n) \ar[d]_{\simeq} \ar@{>>}[r] & H_0(\Gamma(G,n), \mathrm{St}_n\otimes \ori_n) \ar[d]^{\simeq} \\
\mathcal M_n(G)\otimes \bQ  \ar@{>>}[r]^{\mu^-} & \mathcal M_n^-(G)\otimes \bQ,
}
}

\

\noindent
where the horizontal arrows are the natural surjections, the left vertical arrow is the isomorphism  \eqref{eqn:fn} and the right vertical arrow is an isomorphism as well. 
\end{prop}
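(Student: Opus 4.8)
The plan is to derive everything from Proposition~\ref{prop:stein} together with the right-exactness of the coinvariants functor $H_0(\Gamma,-)=(-)_\Gamma$. Write $\Gamma:=\Gamma(G,n)$, $V:=\bQ^n$, and let $\mathcal F^{\le n-1}:=\mathcal F^{\le n-1}(V)\subset\mathcal F_n$ be the submodule of functions pulled back from quotients of dimension $\le n-1$. By Proposition~\ref{prop:stein} this is exactly the kernel of the surjection $\mathcal F_n\twoheadrightarrow\mathrm{gr}^n(\mathcal F(V))=\mathrm{St}_n\otimes\ori_n$, and it is a $\mathrm{GL}_n(\bQ)$-submodule, in particular $\Gamma$-stable. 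Applying $(-)_\Gamma$ to $0\to\mathcal F^{\le n-1}\to\mathcal F_n\to\mathrm{St}_n\otimes\ori_n\to 0$ and using right-exactness yields an exact sequence
\[
(\mathcal F^{\le n-1})_\Gamma \xrightarrow{\ \iota\ } (\mathcal F_n)_\Gamma \longrightarrow (\mathrm{St}_n\otimes\ori_n)_\Gamma \longrightarrow 0,
\]
so that $H_0(\Gamma,\mathrm{St}_n\otimes\ori_n)=\mathrm{coker}(\iota)$. Since the isomorphism of \eqref{eqn:fn} sends $[\mathcal X_\Lambda]$ to the class of $\psi(\mathbf L,\chi,\Lambda)$, it remains to identify $\mathrm{image}(\iota)$, under this isomorphism, with the $\bQ$-span of the relations $\langle -a_1,a_2,\ldots,a_n\rangle+\langle a_1,\ldots,a_n\rangle$ defining $\mathcal M_n^-(G)$. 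Granting that, the right vertical isomorphism is the induced map $\mathrm{coker}(\iota)\simeq\mathcal M_n^-(G)\otimes\bQ$, and commutativity of the square is automatic, because this isomorphism is by construction the descent of $[\mathcal X_\Lambda]\mapsto\mu^-(\psi(\mathbf L,\chi,\Lambda))$.

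To compute $\mathrm{image}(\iota)$: as $\mathcal F^{\le n-1}$ is $\Gamma$-stable, its image in $(\mathcal F_n)_\Gamma$ is spanned by the classes of $\mathcal X_C$ with $C=\pi^{-1}(D)$ for a quotient $\pi\colon V\twoheadrightarrow V'$, $\dim V'\le n-1$, and $D\subset V'$ a convex rational polyhedral cone. The key reduction is that any such $C$ is already of this shape for a codimension-one quotient: choosing a rational line $U_0\subset\ker\pi$, the map $\pi$ factors through $\pi_0\colon V\to V/U_0$ and $C=\pi_0^{-1}(D_0)$ with $D_0$ the image of $C$. Fix a primitive generator $u$ of $\mathbf L\cap U_0$; subdivide $D_0$ into basic simplicial cones for the lattice $\pi_0(\mathbf L)$, lift their generators to $\mathbf L$, and adjoin $u$. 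This writes $C$ as a union of pairs of basic simplicial cones $\Lambda_k^+$ and $\Lambda_k^-=\sigma(\Lambda_k^+)$, where $\sigma$ negates $u$ and fixes the chosen complementary generators. The overlaps have dimension $<n$, so $\mathcal X_C=\sum_k(\mathcal X_{\Lambda_k^+}+\mathcal X_{\Lambda_k^-})$ in $\mathcal F_n$, and under \eqref{eqn:fn} the class of each $\mathcal X_{\Lambda_k^+}+\mathcal X_{\Lambda_k^-}$ maps to $\langle\ldots,a^{(k)}\rangle+\langle\ldots,-a^{(k)}\rangle$, where $a^{(k)}$ is the component of $\chi$ along $u$ — by (S), one of the defining relations of $\mathcal M_n^-(G)$. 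Conversely, taking $D_0=\pi_0(\Lambda^+)$ for a single basic simplicial $\Lambda^+$ with $u$ among its generators realizes every such relation. Hence $\mathrm{image}(\iota)$ is precisely $\ker\big(\mu^-\colon\mathcal M_n(G)\otimes\bQ\to\mathcal M_n^-(G)\otimes\bQ\big)$, which completes the proof.

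The hard part will be this identification of $\mathrm{image}(\iota)$: one must be sure that passing to $\Gamma$-coinvariants does not impose more than expected, i.e., that the relations inherited from $\mathcal F^{\le n-1}$ are generated by the single antisymmetry relation and not by something strictly stronger. The reduction to codimension-one quotients is exactly what forces this — it expresses the fact that in $\mathrm{St}_n\otimes\ori_n=\mathrm{gr}^n(\mathcal F(V))$ the class of $\mathcal X_\Lambda$ merely changes sign under a coordinate reflection, with no finer relation surviving. A secondary point to verify is that $\mathcal F^{\le n-1}$ is indeed spanned over $\bQ[\Gamma]$ by characteristic functions of cones $\pi_0^{-1}(D_0)$ as above; this uses that $\mathcal F_n=\mathcal F_{\mathbf L,\bZ}\otimes\bQ$ is generated by characteristic functions of (basic simplicial) cones together with the explicit dimension filtration of Proposition~\ref{prop:stein}.
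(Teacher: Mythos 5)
Your argument is correct, but it takes a genuinely different route from the paper. The paper proves the right vertical isomorphism by quoting an explicit presentation of the restriction of $\mathrm{St}_n$ to $\mathrm{GL}_n(\bZ)$ by generators (ordered $\bZ$-bases $(e_1,\ldots,e_n)$) and three relations (the permutation relation with signs, the two-term subdivision relation, and $(e_1,\ldots)=(-e_1,\ldots)$), citing \cite[Theorem B]{church}; twisting by $\ori_n$ flips two of the signs, and taking $\Gamma(G,n)$-coinvariants then reproduces verbatim the defining relations (S), (M) with $k=2$, and \eqref{eqn:anti} of $\mathcal M_n^-(G)$. You instead avoid that citation entirely: you use the dimension filtration of $\mathcal F_n$ from Proposition~\ref{prop:stein} to exhibit $\mathrm{St}_n\otimes\ori_n$ as $\mathcal F_n/\mathcal F^{\le n-1}$, apply right-exactness of coinvariants, and then identify the image of $(\mathcal F^{\le n-1})_\Gamma$ in $(\mathcal F_n)_\Gamma\simeq\mathcal M_n(G)\otimes\bQ$ with $\ker(\mu^-)$ by a direct cone argument — reducing any generator of $\mathcal F^{\le n-1}$ to a pullback along a codimension-one quotient, subdividing, and observing that each resulting pair $\mathcal X_{\Lambda^+}+\mathcal X_{\Lambda^-}$ lands exactly on an antisymmetry relation $\langle\ldots,a\rangle+\langle\ldots,-a\rangle$, with the converse containment supplied by reading the same computation backwards. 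Both proofs are sound; what the paper's buys is brevity at the cost of an external input (the presentation of $\mathrm{St}_n$ as a $\mathrm{GL}_n(\bZ)$-module), whereas yours is self-contained and stays squarely within the cone-decomposition formalism the paper has already set up (Lemma~\ref{lemm:chi}, Proposition~\ref{prop:stein}). One small point worth spelling out in your write-up: the reflection $\sigma$ negating $u$ and fixing the other lifted generators lies in $\mathrm{GL}_n(\bZ)$ but typically not in $\Gamma(G,n)$, so the two terms $\mathcal X_{\Lambda^\pm}$ are not identified in coinvariants — which is of course exactly why their sum produces the nontrivial antisymmetry relation rather than $2\mathcal X_{\Lambda^+}$, but stating it explicitly forecloses a possible misreading.
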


\begin{proof}
The proof of the commutativity of the diagram is straightforward, we explain only the right vertical isomorphism. 
Recall that the Steinberg representation $\mathrm{St}_n$ restricted to $\mathrm{GL}_n(\bZ)$ is generated by the set of $\bZ$-bases 
$$
\{ (e_1,\ldots, e_n)\},
$$
modulo relations 
\begin{itemize}
\item $(e_{\sigma(1)}, \ldots, e_{\sigma(n)}) = (-1)^n (e_1,\ldots, e_n)$, $\sigma\in \mathfrak S_n$,
\item $(e_1,e_2, e_3\ldots, e_n) = (e_1+e_2, e_2, e_3,\ldots, e_n) + (e_1,e_1+e_2,\ldots, e_n) $,
\item $(e_1,\ldots, e_n) = (-e_1,e_2, \ldots, e_n)$,
\end{itemize}
see, e.g., \cite[Theorem B]{church} and the references therein. 
Therefore, $\mathrm{St}_n\otimes \ori_n$, restricted to $\mathrm{GL}_n(\bZ)$ is again generated by 
the set of $\bZ$-bases 
$$
\{ (e_1,\ldots, e_n)\}, 
$$
but subject to new relations 
\begin{itemize}
\item $(e_{\sigma(1)}, \ldots, e_{\sigma(n)}) = (e_1,\ldots, e_n)$, $\sigma\in \mathfrak S_n$,
\item $(e_1,e_2, e_3\ldots, e_n) = (e_1+e_2, e_2, e_3,\ldots, e_n) + (e_1,e_1+e_2,\ldots, e_n) $,
\item $(e_1,\ldots, e_n) = -(-e_1,e_2, \ldots, e_n)$.
\end{itemize}
We see that the first relation is the symmetry relation (S), and the last relation the anti-symmetry relation \eqref{eqn:anti}; 
the second relation translates to relation (M) for $k=2$. 

\end{proof}

Put
$$
\mathbb H_n:= \mathrm{GL}_n(\bR) / \bR^\times_{>0} \cdot \mathrm O_n(\bR);
$$
we have, for $n\ge 2$, and $G$ generated by at most $n$ elements, 
\begin{align*}
\mathcal M_n^-(G)\otimes \bQ & = H_0(\Gamma(G,n), \mathrm{St}_n\otimes \ori_n) \\ 
& = H_{n-1}^{BM}(\Gamma(G,n) \backslash \mathbb H_n,\ori_n) \\
& = H^{\frac{n(n-1)}{2}} (\Gamma(G,n) \backslash \mathbb H_n, \ori_n^{\otimes n}) \\
& = H^{\frac{n(n-1)}{2}}  (\Gamma(G,n),\ori_n^{\otimes n}).
\end{align*}
Indeed, the generator $(e_1,\ldots, e_n) $ of  $\mathrm{St}_n$, where $e_1,\ldots, e_n$ is  the standard basis of $\bZ^n$, 
maps to the homology class of the Borel-Moore chain 
$$
(\bR^\times_{>0})^{n-1} \simeq \mathrm{Diag}_{>0,n}(\bR) / \bR^\times_{>0} \subset \mathbb H_n.
$$
The third isomorphism is Poincare duality.

Let $\Gamma\subset \mathrm{GL}_n(\bZ)$ be an arithmetic group. 
The cuspidal part of cohomology, with coefficients in a finite-dimensional representation $\rho$ of $\mathrm{GL}_n(\bQ)$, is a subspace of interior cohomology
$$
H^*_{int}(\Gamma, \rho) := \text{Image}\left(H^*_c(\Gamma \backslash \mathbb H_n, \rho) \ra H^*(\Gamma\backslash \mathbb H_n , \rho)\right).
$$
It is believed that for arithmetic subgroups of $\mathrm{GL}_n(\bQ)$, $n\le 3$, the cuspidal part {\em coincides} with the interior part. 

Notice that $\ori_n$, restricted to $\mathrm{GL}_n(\bZ)$, coincides with the algebraic representation $\det_n: \gamma\mapsto \det(\gamma)$.

It is known that 
$$
H^i_{cusp}(\Gamma, \rho)\neq 0 
$$
only for 
$$
\frac{\frac{n(n+1)}{2}-1}{2} -\frac{ [\frac{n-1}{2} ]}{2} \le     i \le \frac{\frac{n(n+1)}{2}-1}{2} + \frac{[\frac{n-1}{2}]}{2}.
$$
The upper bound coincides with $\frac{n(n-1)}{2}$ for $n=1,2,3$ and is strictly smaller for $n\ge 4$. 
Our experiments (see Section~\ref{sect:exp}) suggest that 
$$
\mathcal M_{n,prim}^-(G) = H_{cusp}^{\frac{n(n-1)}{2}}(\Gamma(G,n), \ori_n^{\otimes n}),
$$
hence vanish for $n\ge 4$.

In the following section, we will see that, for $n=2$, the main actors
are modular forms of weight 2, and sums of two Tate motives twisted by characters.

\

Among other variants in the definition of $\mathcal F$ are:
\begin{itemize}
\item using $\bZ$ or finite fields
as coefficients, instead of $\bQ$-coefficients, one can study torsion effects.  
\item one can omit the condition of factoring by characteristic functions with support in dimension $\le (n-1)$.
\item when the representation $\rho$ is on the space of degree-$d$ polynomials, one can consider
{\em polynomial splines}, with respect to some complete rational fan $\Sigma$ on $\bR^n$, i.e., 
functions on $\bR^n$ which are piecewise polynomial on the cones of $\Sigma$, with $\bQ$-coefficients, 
and with continuous derivatives up to some fixed $d'<d$. 
\end{itemize}

The last example is especially interesting as such representations are realized as submodules of extensions of Steinberg modules, and coinvariants with values in such modules could, potentially, 
capture higher homology groups of Steinberg modules, thus making them computationally much more accessible.

We finish this section with a challenge concerning the possibility, in the framework of Question~\ref{ques:maybe}, to go beyond the realm of cohomological
 (but still algebraic) automorphic forms. 
\begin{ques}
Can one find a representation of $\mathrm{SL}_2(\bQ)$ whose restriction to $\mathrm{SL}_2(\bZ)$ is finitely-generated, and whose Hecke spectrum captures modular forms of weight $1$ and Maass forms with Laplace eigenvalue $1/4$?
\end{ques}

Morally, such modules should be realized in a class of odd/even distributions on $\bR^2$ of homogeneity degree $-1$.

\section{Lattice-theoretic approach to multiplication and co-multiplication}

In this section, we reinterpret the multiplication and co-multiplication on $\mathcal M_n^-(G)$, defined in Section~\ref{sect:co}, in 
terms of lattices. 

For any $n\ge 1$ and any nontrivial finite abelian group $G$ we
define
$$
\mathcal E_n(G):=\bQ^{ \{ \text{epi } \bZ^n \twoheadrightarrow G\}},
$$
it is a finite-dimensional permutation module for $\mathrm{GL}_n(\bZ)$. 
Define the stack (with finite stabilizers)
$$
\mathbb X_n:=\mathrm{GL}_n(\bZ) \backslash  \mathrm{GL}_n(\bR) /\mathrm O_n(\bR).
$$
This stack parametrizes rank $n$ Arakelov bundles on $\widehat{\Spec(\bZ)}$, i.e., pairs $(\mathbf L, h)$, where $\mathbf L$ is a lattice  of rank $n$ 
and $h$ is a positive-definite quadratic form on $\mathbf L\otimes \bR$. Let 
$\mathcal L_{n,G}$ be a  $\bQ$-local system on $\mathbb X_n$ associated with the representation $\mathcal E_n(G)\otimes \ori_n$.
Then we have 
\begin{equation}
\label{eqn:fnnn}
\mathcal M_n^-(G)\otimes \bQ= H_n^{BM}(\mathbb X_n, \mathcal L_{n,G}).
\end{equation}

The multiplication $\nabla^-$, defined in in Section~\ref{sect:co}, admits the following reformulation in this language: 
consider flags $\mathcal G_\bullet$ of subgroups
$$
0=G_{\le 0}\subsetneq G_{\le 1}\subset \ldots \subsetneq G_{\le r} = G, \quad  r\ge 1,
$$
and sequences of positive integers 
$n_1,\ldots, n_r$, such that $n_1+\cdots + n_r=n$.
We have a homomorphism
\begin{equation}
\label{eqn:del}
\bigotimes_{i=1}^r H_{n_i}^{BM} (\mathbb X_{n_i}, \mathcal L_{n_i,gr_i(\mathcal G_\bullet)}) 
 \ra 
H_{n}^{BM} (\mathbb X_{n}, \mathcal L_{n,G}),
\end{equation}
defined as follows: 
consider the graph
$$
\mathbb Y^{\nabla}_{n_1,\ldots, n_r}\subset \left( \mathbb X_ {n_1}\times \cdots \times  \mathbb X_{n_r}\right)   \times \mathbb X_n,
$$
of the closed embedding (hence proper map)
$$
\mathbb X_ {n_1}\times \cdots\times  \mathbb X_{n_r}   \ra \mathbb X_n,
$$
given by
$$
(\mathbf L_{1}, h_1), \ldots ,(\mathbf L_{r}, h_r)  \mapsto   (\mathbf L = \mathbf L_{1}\oplus \cdots  \oplus \mathbf L_{r}, h= 
 h_{1}\boxplus \cdots  \boxplus h_r).
$$
We have a diagram

\

\centerline{
\xymatrix{
& \mathbb Y^{\nabla}_{n_1,\ldots, n} \ar[dl]_{\pi_{n_1,\ldots, n_r}} \ar[dr]^{\pi_n}  & \\
 \mathbb X_{n_1} \times \cdots \times \mathbb X_{n_r}& & \mathbb X_n 
}
}

\

\noindent
Here, $\pi_{n_1,\ldots, n_r}$ is an isomorphism. The morphism of local systems
$$
 \pi^*_{n_1,\ldots, n_r} (\mathcal L_{n_1,gr_1(\mathcal G_\bullet)} \boxtimes \cdots \boxtimes \mathcal L_{n_r,gr_r(\mathcal G_\bullet)} )\ra 
  \pi^*_{n} \mathcal L_{n,G}
$$
is given, at any point, by
\begin{itemize}
\item 
a canonical identification of orientation bundles
$$
\ori(\mathbf L_1) \otimes \cdots \otimes \ori(\mathbf L_r) \stackrel{\sim}{\lra} \ori(\mathbf L) 
$$
\item a morphism of fibers of 
local systems associated to the permutation modules 
\begin{equation}
\label{eqn:chij}
\bQ^{ \{ \text{epi } {\mathbf L}_1^\vee \twoheadrightarrow A_1\}} \otimes \cdots \otimes \bQ^{ \{ \text{epi } {\mathbf L}_r^\vee \twoheadrightarrow A_r\}}  \ra 
\bQ^{ \{ \text{epi } {\mathbf L}^\vee \twoheadrightarrow A\}};
\end{equation}
consider 
$$
\chi \in \mathbf L\otimes A := \Hom(\mathbf L^\vee,A),
$$
such that  the restriction of $\chi$ to $\mathbf L_i^\vee\subset \mathbf L^\vee$ takes values in characters of $G$ vanishing on $G_{\le i-1}$, for all $i$; 
such characters induce characters of $gr_i(\mathcal G_\bullet)$, and homomorphisms 
$$
\chi_i :\mathbf L_i^\vee \ra A_i:=\Hom(gr_i(\mathcal G_\bullet), \bC^\times),
$$
we insist that $\chi_i$ are surjective, for all $i$ (this implies that $\chi$ is surjective as well).
Such $\chi$ defines a morphism of permutation modules of rank 1, given by an elementary matrix, with indices
$$
(\chi_1,\ldots, \chi_r),\chi
$$
taking 
the sum over all such elementary matrices  
defines the 
desired homomorphism \eqref{eqn:chij}. 
\end{itemize}

The co-multiplication $\Delta^-$, defined in Section~\ref{sect:co}, also  admits a geometric reformulation. We 
 have a homomorphism
\begin{equation}
\label{eqn:del2}
H_{n}^{BM} (\mathbb X_{n}, \mathcal L_{n,G}) \ra \bigotimes_{i=1}^r H_{n_i}^{BM} (\mathbb X_{n_i}, \mathcal L_{n_i,gr_i(\mathcal G_\bullet)}) 
\end{equation}
defined similarly to \eqref{eqn:del}, but instead of the graph 
$\mathbb Y^{\nabla}_{n_1,\ldots n_r}$ of a map, 
we consider the {\em correspondence} 
$$
\mathbb Y^{\Delta}_{n_1,\ldots, n_r} \subset \mathbb X_n \times \left( \mathbb X_{n_1}\times \cdots \times \mathbb X_{n_r}\right),
$$
which is \'etale over $\mathbb X_n$ and {\em proper} over  $\left( \mathbb X_{n_1}\times \cdots \times \mathbb X_{n_r}\right)$, and which can be viewed as a graph 
of a multi-valued map. In detail, an element of $\mathbb Y_{n_1,\ldots, n_r}$ is given by the data:
\begin{itemize}
\item $(\mathbf L,h)$, a lattice of rank $n$, with a metric, i.e., a positive quadratic form $h$ on $\mathbf L\otimes \bR$ as above, 
\item flag $\mathbf L_\bullet$ of full sublattices 
$$
0=\mathbf L_{\le 0}\subsetneq \mathbf L_{\le 1} \subsetneq \ldots \subsetneq \mathbf L_{\le r}= \mathbf L.
$$
\item choice of isomorphisms 
$$
\mathbf L_i \simeq gr_i(\mathbf L_\bullet)
$$
such that the induced metrics on $\mathbf L_{n_i}\otimes \bR$ coincide with $h_i$. 
\end{itemize}
We have a diagram

\centerline{
\xymatrix{
& \mathbb Y^{\Delta}_{n_1,\ldots, n} \ar[dl]_{\pi_n} \ar[dr]^{\pi_{n_1,\ldots, n_r}} & \\
\mathbb X_n & & \mathbb X_{n_1} \times \cdots \times \mathbb X_{n_r} .
}
}

The morphism of local systems 
$$
\pi_n^* \mathcal L_{n,G} \ra \pi^*_{n_1,\ldots, n_r} 
\left(  \mathcal L_{n_1,gr_1(\mathcal G_\bullet) }  \oplus \cdots \mathcal L_{n_r,gr_r(\mathcal G_\bullet)}  \right)
$$
is given, at any point, by 
\begin{itemize}
\item a natural isomorphism of orientation bundles
$$
\ori(\mathbf L) \simeq \ori(\mathbf L_1) \otimes \cdots \otimes \ori(\mathbf L_r),
$$
\item a morphism of fibers of 
local systems associated to the permutation modules 
\begin{equation}
\label{eqn:chij2}
\bQ^{ \{ \text{epi } {\mathbf L}^\vee \twoheadrightarrow A\}} \ra 
\bQ^{ \{ \text{epi } {\mathbf L}_1^\vee \twoheadrightarrow A_1\}} \otimes \cdots \otimes \bQ^{ \{ \text{epi } {\mathbf L}_r^\vee \twoheadrightarrow A_r\}}; 
\end{equation}
consider 
$$
\chi \in \mathbf L\otimes A := \Hom(\mathbf L^\vee,A),
$$
such that it induces a commutative diagram

\

\centerline{
\xymatrix{
\mathbf L^\vee= \mathbf L_{\le 0}^\perp \ar@{>>}[d]& \supsetneq &  \mathbf L_{\le 1}^\perp  \ar@{>>}[d]& \cdots & \supsetneq &  \mathbf L_{\le r }^\perp \ar@{>>}[d] \\
A= G_{\le 0}^\perp                                                    & \supsetneq &  G_{\le 1}^\perp                                 &  \cdots & \supsetneq &   G_{\le r }^\perp
}
}

\

\noindent
i.e., 
$$
G_{\le i}^\perp = \chi(\mathbf L_{\le i}^\perp), \quad i=0,\ldots, r-1.
$$ 
Such character $\chi$ is surjective (case $i=0$) and induces surjective homomorphisms
$$
\chi_i : \mathbf L_i^\vee \ra A_i=\Hom(G_i), \quad i=1,\ldots, r,
$$
where $\mathbf L_i = \mathbf L_{\le i} /  \mathbf L_{\le i-1}$ and $G_i= G_{\le i} / G_{\le i-1}$. 
Again, such $\chi$ defines an elementary matrix with indices
$$
\chi, (\chi_1,\ldots, \chi_r),
$$
taking the sum over all such $\chi$ we obtain the desired homomorphism. 
\end{itemize}

\

\begin{prop}
\label{prop:identy}
Using the identifications
$$
\mathcal M_n^-(G)\otimes \bQ= H_n^{BM}(\mathbb X_n, \mathcal L_{n,G})
$$
and formulas \eqref{eqn:del} and \eqref{eqn:del2} we obtain homomorphisms
$$
\mathcal M_{n_1}^-(G_1)\otimes \cdots \otimes \mathcal M_{n_r}^-(G_r) \otimes \bQ \rightleftarrows \mathcal M_n^-(G)\otimes \bQ
$$
which are the same as those induced from $\Delta$ and $\nabla$ in Section~\ref{sect:co}. 
\end{prop}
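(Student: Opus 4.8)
The plan is to evaluate the two geometric maps \eqref{eqn:del} and \eqref{eqn:del2} on the explicit generators of the groups $\mathcal M_n^-(G)\otimes\bQ$ furnished by \eqref{eqn:fnnn}, and to read off the combinatorial formulas \eqref{eqn:mult-def} and \eqref{eqn:first} term by term. So first I would make the dictionary precise. Unwinding the chain of isomorphisms behind \eqref{eqn:fnnn}, namely $\mathcal M_n^-(G)\otimes\bQ = H_0(\Gamma(G,n),\mathrm{St}_n\otimes\ori_n) = H_n^{BM}(\mathbb X_n,\mathcal L_{n,G})$, the symbol $\langle a_1,\dots,a_n\rangle^-$ corresponds to the Borel--Moore class supported on (the $\mathrm{GL}_n(\bZ)$-orbit closure of) the positive diagonal torus cell
$$
T^+ := \{(\bZ^n,h)\in\mathbb X_n : h \text{ diagonal in the standard basis } e_1,\dots,e_n\},
$$
carrying the section of $\mathcal L_{n,G}$ given, in the permutation basis of the fibre of $\mathcal E_n(G)\otimes\ori_n$, by the epimorphism $\chi=\sum_i e_i\otimes a_i$; the twist by $\ori_n$ is exactly what converts the a priori relation on these cells into the symmetry (S) together with the anti-symmetry \eqref{eqn:anti}. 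Since the $r$-fold arrows in \eqref{eqn:del}, \eqref{eqn:del2} are the simplicial iterates of the binary ones, and composition of graphs, resp.\ of correspondences, is associative in the same way that the iterated binary $\nabla$, $\Delta$ of Section~\ref{sect:co} are (co)associative, it suffices to treat $r=2$, i.e.\ a short exact sequence $0\to G'\to G\to G''\to 0$ with $n=n'+n''$.

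For multiplication, in \eqref{eqn:del} the leg $\pi_{n_1,\dots,n_r}$ is an isomorphism, so the geometric map is: transport along $\pi_{n_1,n_2}^{-1}$, push forward along $\pi_n$, and apply the coefficient arrow \eqref{eqn:chij}. On cycles this merely reflects that, under $\mathbf L=\mathbf L_1\oplus\mathbf L_2$, the product of the two positive diagonal torus cells maps isomorphically to the positive diagonal torus cell of $\mathbb X_n$. All the content therefore sits in \eqref{eqn:chij}: it sends the permutation basis vector $(\chi_1,\chi_2)$ to the sum of all $\chi\in\Hom(\mathbf L^\vee,A)$ whose restriction to $\mathbf L_2^\vee$ is $\chi_2$ (valued in the characters of $G$ trivial on $G'$, i.e.\ in $A''\subset A$) and whose restriction to $\mathbf L_1^\vee$ reduces modulo $A''$ to $\chi_1$. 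In the basis adapted to $\mathbf L=\mathbf L_1\oplus\mathbf L_2$ this says $\chi=\sum_{i\le n'}e_i\otimes a_i+\sum_{j}e_{n'+j}\otimes a_j''$ with the $a_i$ ranging over all lifts of $a_i'\in A'$ and the $a_j''\in A''$ fixed; reading off the coefficients reproduces \eqref{eqn:mult-def} verbatim. The same computation carried out before passing to the $(-)$-quotients identifies $\nabla$ itself.

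For co-multiplication, in \eqref{eqn:del2} the leg $\pi_n$ is étale (the added datum — a full sublattice flag $\mathbf L_\bullet$ together with its gradings — is discrete) and $\pi_{n_1,\dots,n_r}$ is proper, so the geometric map is: pull back along $\pi_n$, push forward along $\pi_{n_1,n_2}$, apply \eqref{eqn:chij2} and the orientation identification. Pulling the class $[\,\overline{T^+}\,]$ back along $\pi_n$ produces the sum, over $\mathrm{GL}_n(\bZ)$-orbits of admissible flags $\mathbf L_\bullet$, of the corresponding preimage cells; because $[\,\overline{T^+}\,]$ is represented by the standard apartment in the Steinberg/Tits-building model of $\mathrm{St}_n$, the only flags meeting it are the coordinate ones, $\mathbf L_{\le1}=\langle e_i:i\in I'\rangle$ with $|I'|=n'$ and $I''=\{1,\dots,n\}\setminus I'$. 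For such a flag the proper pushforward of $[\,\overline{T^+}\,]$ along ``pass to the associated graded'' is the product of the positive diagonal torus cells of $\mathbb X_{n'}$ and $\mathbb X_{n''}$, decorated on the first factor by $\chi\bmod A''$ restricted to $I'$ and on the second by $\chi$ restricted to $I''$; the compatibility constraint $G_{\le i}^\perp=\chi(\mathbf L_{\le i}^\perp)$ imposed in \eqref{eqn:del2} says precisely that $a_j\in A''$ for $j\in I''$ and that these $a_j$ span $A''$ — the generation condition in \eqref{eqn:first} — while the orientation-bundle identification yields the sign and the landing in the $(-)$-quotient of the second factor. Summing over the subdivisions $I'\sqcup I''$ recovers \eqref{eqn:first}, and restriction along $\mathcal M_{n'}(G')\otimes\bQ\twoheadrightarrow\mathcal M_{n'}^-(G')\otimes\bQ$ accounts for the mixed (non-$(-)$) variant.

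The delicate point is the claim, in the co-multiplication step, that among the admissible flags lying over the diagonal-torus cell only the coordinate ones contribute: this has to be justified either building-theoretically (a subspace flag is compatible with a fixed apartment exactly when it is spanned by coordinate subspaces) or by a transversality/dimension count showing that non-coordinate flags push forward to classes of the wrong Borel--Moore degree in $H_\bullet^{BM}(\mathbb X_{n'}\times\mathbb X_{n''},\mathcal L_{n',G'}\boxtimes\mathcal L_{n'',G''})$, hence to zero. Everything else — matching the elementary-matrix arrows \eqref{eqn:chij}, \eqref{eqn:chij2} against \eqref{eqn:mult-def}, \eqref{eqn:first}, and verifying that the chain of orientation-bundle identifications composes to exactly the relation \eqref{eqn:anti} with no spurious sign — is routine bookkeeping that should nonetheless be carried out explicitly.
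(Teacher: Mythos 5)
Your treatment of the product map matches the paper's and is fine: the diagonal chain $\bR_{>0}^n$ maps isomorphically onto the product of diagonal chains, and the combinatorics of \eqref{eqn:chij} reproduces \eqref{eqn:mult-def}. You have also correctly identified that the crux of the co-product case is to show that among the admissible flags only the coordinate ones contribute. However, the resolution you propose for this "delicate point" is not right, and neither of your two fallback arguments closes the gap.

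The problem is with the claim "the only flags meeting it are the coordinate ones." Because $\pi_n$ is \'etale, pulling back the closed chain $C_n$ gives a sum of copies of $C_n$ indexed by \emph{all} admissible flags of the coordinate lattice, not just the coordinate ones: the fibre of $\pi_n$ over a point $(\mathbf L,h)\in C_n$ contains every admissible flag $\mathbf L_\bullet$ of $\mathbf L$, regardless of whether it is spanned by coordinate sublattices. So the building-theoretic fact you cite (apartments are coordinate flags) is true but irrelevant here: the pullback does not "see" an apartment, it sees every flag. Your second fallback, a Borel--Moore degree count, also fails: for a non-coordinate flag the pushed-forward chain $C_{\mathbf L_\bullet}$ typically still has the correct dimension $n$ (already for $n=2$ with the flag generated by $e_1+e_2$, the image of the diagonal cell in $\mathbb X_1\times\mathbb X_1$ is a full-dimensional region), so it does not vanish for dimension reasons. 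What the paper actually shows is that for a flag not compatible with the chosen coordinate decomposition, the resulting closed Borel--Moore chain $C_{\mathbf L_\bullet}$, while nonzero, is a \emph{boundary}, and therefore contributes zero to $H_n^{BM}$. This is the missing idea in your argument, and it cannot be replaced by either of the alternatives you suggest. The rest of your bookkeeping (orientations, the elementary-matrix coefficient arrows \eqref{eqn:chij2} recovering \eqref{eqn:first} and its generation condition, restriction to the $(-)$-quotient) is sound once the coordinate-flag restriction is established.
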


\begin{proof}
The case of the product follows immediately from the definition: 
a basis $e_1,\ldots, e_n $ of $\mathbf L$ gives a closed Borel-Moore chain $\simeq \bR_{>0}^n$, consisting of diagonal forms $h$ in this basis. 

To verify the co-product we need the following: 
let $\mathbf L\simeq \bZ^n$ be the standard coordinate lattice, up to the action of 
$\mathfrak S_n \ltimes (\bZ/2\bZ)^n$ interchanging the coordinates and 
acting by sign on each coordinate. 
We have a canonical Borel-Moore closed chain  
$$
C_n\subset \mathrm{Chains}_n^{BM}(\mathbb X_n,\bZ), \quad \partial (C_n) =0, 
$$
given by the image of positive diagonal matrices. Given a flag
$$
0=\mathbf L_{\le 0} \subsetneq \cdots \subsetneq \mathbf L_{\le r}  = \mathbf L
$$
and using the correspondence 
$$
\mathbb Y^{\Delta}_{n_1,\ldots, n_r}
$$
we obtain a closed Borel-Moore chain 
$$
C_{\mathbf L_\bullet} \subset \mathrm{Chains}^{BM}_n(\mathbb X_{n_1}\times \cdots \times \mathbb X_{n_r},\bZ),
$$
to any point $h$ in $C_n$ we associate a collection 
$$
(h_1,\ldots, h_r)\in \mathbb X_{n_1}\times \cdots \times \mathbb X_{n_r}.
$$ 

The main observation is that if the flag is not 
compatible with the chosen coordinate decomposition, then the corresponding chain is a boundary. From this it follows that only 
the coordinate flags contribute to the formula. 
\end{proof}

\

Following the reasoning in Section~\ref{sect:co}, specifically \eqref{eqn:prim}, we define
$$
H_{n,prim}^{BM}(\mathbb X_n,\mathcal L_{n,G})\subseteq H_{n}^{BM}(\mathbb X_n,\mathcal L_{n,G})
$$
as the common kernel of all nontrivial co-multiplication homomorphisms ($r\ge 2$). 
Evidently, we have
$$
\mathcal M_{n,prim}^-(G)\otimes \bQ=H_{n,prim}^{BM}(\mathbb X_n,\mathcal L_{n,G}),
$$
under the above identifications.

We recall that
$$
H_{n,int}(\mathbb X_n,\mathcal L_{n,G}):=
\mathrm{Image}\left( H_n(\mathbb X_n,\mathcal L_{n,G}) \ra 
H^{BM}_n( \mathbb X_n,\mathcal L_{n,G})\right).
$$

\begin{conj}
\label{conj:222}
For every nontrivial finite abelian group $G$ and every $n\ge 1$,
we have 
$$
H_{n,cusp}(\mathbb X_n,\mathcal L_{n,G})  = H_{n,int}(\mathbb X_n,\mathcal L_{n,G}) = 
H_{n,prim}^{BM}(\mathbb X_n,\mathcal L_{n,G}).
$$
\end{conj}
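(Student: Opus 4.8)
## Proof strategy for Conjecture~\ref{conj:222}

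The plan is to establish the two claimed equalities separately, since they are of rather different natures. The equality $H_{n,int}(\mathbb X_n,\mathcal L_{n,G}) = H_{n,prim}^{BM}(\mathbb X_n,\mathcal L_{n,G})$ is the combinatorial/geometric heart of the matter, while $H_{n,cusp} = H_{n,int}$ is a question of Eisenstein cohomology, known for $n\le 3$ and expected (but open) in general.

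First I would attack $H_{n,int}\subseteq H_{n,prim}^{BM}$. By Proposition~\ref{prop:stein} and the identification \eqref{eqn:fnnn}, the Borel–Moore homology $H_n^{BM}(\mathbb X_n,\mathcal L_{n,G})$ is computed from the boundary-reduced Steinberg/Solomon–Tits picture: a class is represented by a (relative) cycle built from the coordinate chambers $C_n\simeq \bR_{>0}^n$ decorated by epimorphisms $\chi\colon \mathbf L^\vee\twoheadrightarrow A$. The co-multiplication maps $\Delta^-$ of \eqref{eqn:del2} are, geometrically, "restriction to the boundary strata of the Borel–Serre / reductive Borel–Serre compactification indexed by flags $\mathcal G_\bullet$ of subgroups", as made precise in Proposition~\ref{prop:identy}. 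The key step is therefore: \emph{a Borel–Moore class that extends to a class in ordinary homology $H_n(\mathbb X_n,\mathcal L_{n,G})$ — equivalently, has zero boundary in the link of every cusp — must lie in the kernel of every $\Delta^-$ with $r\ge 2$.} I would prove this by showing that, up to sign and the orientation-bundle identification, the $\Delta^-$-component of a class indexed by the flag $0\subsetneq G_{\le 1}\subsetneq\cdots\subsetneq G$ computes precisely the contribution of the corresponding rational parabolic $P_{\mathcal G_\bullet}\subset\mathrm{GL}_n$ to the boundary of the Borel–Serre compactification, using that the fibration $\mathbb Y^\Delta_{n_1,\dots,n_r}$ is étale over $\mathbb X_n$ and proper over the product $\prod\mathbb X_{n_i}$, so push–pull along it is exactly the Borel–Moore "cap with the boundary face" operation. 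Thus a class in the image of $H_n\to H_n^{BM}$ — which has no boundary component — is annihilated by all $\Delta^-$, i.e.\ is primitive. The reverse inclusion $H_{n,prim}^{BM}\subseteq H_{n,int}$ is the subtler direction: I would argue that a primitive Borel–Moore class, having trivial image under every boundary/$\Delta^-$ map, bounds in the link of every rational parabolic, hence its image under the connecting map $H_n^{BM}\to H_{n-1}(\partial)$ vanishes, so by the long exact sequence of the pair $(\overline{\mathbb X_n}^{BS},\partial)$ it lifts to compactly supported cohomology and maps into interior homology. The main technical obstacle here is that the boundary of the Borel–Serre compactification is a union of faces glued along deeper faces, so one must check that the \emph{simplicial} complex of $\Delta^-$-maps (the alternating-sum complex of Section~\ref{sect:co}, whose exactness is the content of Conjectures~\ref{conj:1}–\ref{conj:int}) genuinely computes the reduced homology of the Tits building with the local coefficients $\mathcal L_{n,G}$; absent those conjectures one gets the inclusion only up to the cokernel of $d_\Delta$ in positive degrees, so strictly speaking this half rests on, or must be proved in tandem with, Conjecture~\ref{conj:1}.

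For $H_{n,cusp} = H_{n,int}$: interior cohomology decomposes, by Franke's theorem and the theory of Eisenstein series, into a cuspidal part plus a residual/Eisenstein part, and the latter is supported on boundary strata indexed exactly by proper rational parabolics. I would show that the Eisenstein contributions to $H_n^{BM}(\mathbb X_n,\mathcal L_{n,G})$ are precisely detected by the co-multiplications $\Delta^-$: an Eisenstein class built from a Levi $\mathrm{GL}_{n_1}\times\cdots\times\mathrm{GL}_{n_r}$ has nonzero constant term along $P_{\mathcal G_\bullet}$, hence nonzero $\Delta^-$-image, so it is not primitive. Combined with the first part, this forces $H_{n,prim}^{BM}$ (which equals $H_{n,int}$ by the above) to be purely cuspidal. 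For $n\le 3$ this is unconditional using the known vanishing of residual cohomology of $\mathrm{GL}_n$ in the relevant degree (only trivial-type residues occur, which are Eisenstein in our sense); for $n\ge 4$ it is conjectural, and indeed the cohomological-degree bound quoted before the conjecture shows $H_{n,cusp}^{n(n-1)/2}=0$ for $n\ge 4$, consistent with the expected vanishing of all three groups there.

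The step I expect to be the real obstacle is the reverse inclusion $H_{n,prim}^{BM}\subseteq H_{n,int}$, i.e.\ showing that primitivity with respect to the finite, explicit list of algebraic co-multiplications $\Delta^-$ is \emph{strong enough} to kill the full boundary contribution in Borel–Serre cohomology. This is essentially equivalent to the exactness statement of Conjecture~\ref{conj:1}: one needs that the complex $\bigl(\bigoplus_{\mathcal G_\bullet}\bigotimes_i\mathcal M_{n_i}^-(gr_i),\,d_\Delta\bigr)$ has no higher $\bQ$-cohomology, which in turn (via the adjoint $d_\nabla$ and the operator $\bm\Delta$ of Conjecture~\ref{conj:int}) reduces to a purely combinatorial spectral-gap statement about generating tuples in finite abelian groups. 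I would therefore present the proof as \emph{conditional on Conjecture~\ref{conj:1}} (hence unconditional for $n\le 3$, where the building is low-dimensional and the exactness can be checked directly), and flag the general case as open.
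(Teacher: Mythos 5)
The statement you are addressing is labeled \emph{Conjecture} \ref{conj:222} in the paper, and the paper offers no proof of it; the authors explicitly write that it ``is essentially our guess, stated implicitly in Section~\ref{sect:co}.'' So there is no internal proof to compare your attempt against, and you should not expect to find one. Your write-up is, appropriately, a proof \emph{strategy}, and you have correctly identified that any such proof must rest on Conjecture~\ref{conj:1} (or~\ref{conj:int}), which remains open. That honesty is the right call.

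That said, there is a genuine gap in your sketch beyond what you flag. The crux is your identification of the co-multiplication maps $\Delta^-$ with ``restriction to the boundary strata of the Borel--Serre compactification.'' Proposition~\ref{prop:identy} in the paper establishes that $\Delta^-$ and $\nabla^-$ are induced by push--pull along the correspondences $\mathbb Y^{\Delta}$ and $\mathbb Y^{\nabla}$ between the spaces $\mathbb X_{n}$ and $\prod_i \mathbb X_{n_i}$; these correspondences live over the \emph{open} locally symmetric spaces, not over the Borel--Serre boundary. The identification you need---that the connecting map $H_n^{BM}(\mathbb X_n,\mathcal L_{n,G})\to H_{n-1}(\partial\overline{\mathbb X_n}^{BS},\cdot)$, filtered by the Solomon--Tits stratification, matches the $d_\Delta$ complex of Section~\ref{sect:co} term by term (with the correct orientation twists and the generation conditions on the $\chi_i$)---is an additional, non-trivial lemma. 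Without it, neither inclusion $H_{n,int}\subseteq H_{n,prim}^{BM}$ nor the converse follows from Conjecture~\ref{conj:1} alone; you would first have to prove that ``primitive with respect to the algebraic $\Delta^-$'' is the same condition as ``vanishing boundary restriction.'' The surjection $\mathcal F_n\twoheadrightarrow \mathrm{St}_n\otimes\ori_n$ from Proposition~\ref{prop:stein} and the filtration of $\mathcal F(V)$ by Steinberg modules of quotients are the natural tools for such a lemma, but the paper does not supply the comparison, and your sketch asserts it rather than proves it. A secondary issue: even where $H_{n,cusp}=H_{n,int}$ is known or believed ($n\le 3$), the equality with $H_{n,prim}^{BM}$ is still \emph{not} unconditional, since it depends on the boundary-comparison lemma above together with Conjecture~\ref{conj:1}; your closing remark that the $n\le 3$ case ``can be checked directly'' is optimistic and is not supported by anything in the paper.
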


This conjecture is essentially our guess, stated implicitly in Section~\ref{sect:co}.
Assuming this conjecture, we would obtain the following reformulation:

\begin{conj}
\label{conj:flags}
For every nontrivial finite abelian group $G$ and every $n\ge 1$, the natural homomorphism 
$$
\bigoplus_{r=1}^n\bigoplus_{\substack{n_1+\cdots +n_r = n\\\mathcal G_\bullet \text{ of length $r$}}} 
H_{n_1,cusp}(\mathbb X_{n_1}, \mathcal L_{n_1,gr_1(\mathcal G_\bullet)}) \otimes \cdots \otimes H_{n_r,cusp}(\mathbb X_{n_r}, \mathcal L_{n_r,gr_r(\mathcal G_\bullet)}) \ra
$$
$$
\ra  H_{n}^{BM}(\mathbb X_{n}, \mathcal L_{n,G})
$$
is an isomorphism.
\end{conj}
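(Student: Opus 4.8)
\section*{Proof proposal}

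The plan is to obtain the statement as the automorphic incarnation of the combinatorial decomposition of $\mathcal M_n^-(G)\otimes\bQ$, i.e.\ to \emph{reduce} it to the conjectures of Section~\ref{sect:co} by transporting everything through the identification $\mathcal M_n^-(G)\otimes\bQ = H_n^{BM}(\mathbb X_n,\mathcal L_{n,G})$ of \eqref{eqn:fnnn}. First I would invoke Proposition~\ref{prop:identy}: the graph correspondences $\mathbb Y^{\nabla}_{n_1,\dots,n_r}$ and the étale/proper correspondences $\mathbb Y^{\Delta}_{n_1,\dots,n_r}$ induce, on Borel--Moore homology, precisely the multiplication $\nabla^-$ and co-multiplication $\Delta^-$. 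Hence the complexes with differentials $d_\nabla$ and $d_\Delta$ of Section~\ref{sect:co} are realized geometrically as complexes of correspondences among the $\mathbb X_{m}$, and the operator $\bm\Delta = d_\Delta\circ d_\nabla + d_\nabla\circ d_\Delta$ is the associated "Laplacian." Granting Conjecture~\ref{conj:int} (equivalently Conjectures~\ref{conj:1} and \ref{conj:2}), $\bm\Delta\otimes\bQ$ is invertible off the $r=1$ term, which yields the canonical direct-sum decomposition
\[
H_n^{BM}(\mathbb X_n,\mathcal L_{n,G})\otimes\bQ \;\simeq\; \bigoplus_{r\ge 1}\ \bigoplus_{\substack{n_1+\cdots+n_r=n\\ \mathcal G_\bullet\ \text{of length}\ r}} H^{BM}_{n_1,prim}(\mathbb X_{n_1},\mathcal L_{n_1,gr_1(\mathcal G_\bullet)})\otimes\cdots\otimes H^{BM}_{n_r,prim}(\mathbb X_{n_r},\mathcal L_{n_r,gr_r(\mathcal G_\bullet)}),
\]
with the summands embedded by the multiplication maps \eqref{eqn:del}. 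So the only remaining task is to replace each primitive piece $H^{BM}_{m,prim}$ by the cuspidal piece $H_{m,cusp}$.

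That replacement is exactly Conjecture~\ref{conj:222}, namely $H_{m,cusp}=H_{m,int}=H^{BM}_{m,prim}$, and the second step of the plan is to establish it using the theory of Eisenstein cohomology. The co-multiplication maps \eqref{eqn:del2} should be identified with the constant-term (Jacquet-type) maps along the standard parabolics of $\mathrm{GL}_n$: by Proposition~\ref{prop:stein} the associated graded of the dimension filtration on $\mathcal F(V)$ along a parabolic of type $(n_1,\dots,n_r)$ is $\bigoplus \mathrm{St}(V_i)\otimes or(V_i)$, so the "constant term" of the local system $\mathcal L_{n,G}$ along that parabolic is canonically $\bigoplus_{\mathcal G_\bullet}\mathcal L_{n_1,gr_1(\mathcal G_\bullet)}\boxtimes\cdots\boxtimes\mathcal L_{n_r,gr_r(\mathcal G_\bullet)}$. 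Then the common kernel of all nontrivial co-multiplications is, by the Langlands--Franke spectral decomposition of $H^*(\Gamma(G,n),-)$, the interior cohomology, which for $\mathrm{GL}_n$ in the relevant degree $n(n-1)/2$ is expected to coincide with the cuspidal part ($n\le 3$); for $n\ge 4$ the cuspidal range excludes this degree, so $H^{BM}_{n,prim}$ should vanish and the whole group is Eisenstein, built from lower-rank cuspidal data of Levis exactly as the flag sum predicts. Combining the two steps gives the asserted isomorphism.

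The main obstacle is the invertibility of $\bm\Delta$ over $\bQ$, i.e.\ Conjecture~\ref{conj:int}. As noted in the Remark of Section~\ref{sect:co}, this reduces (via the surjection from $\tilde{\mathcal M}_n$ and the self-adjointness of $d_{\tilde\Delta}$ relative to $d_{\tilde\nabla}$) to the vanishing, modulo torsion, of the cohomology of a purely combinatorial complex built from epimorphisms $\bZ^{n_i}\twoheadrightarrow G_i$ and generation conditions — a "Hodge-theoretic" vanishing about lattices and spanning sets of abelian groups, which is the tractable route I would pursue first. On the automorphic side the same obstacle reappears as the need to control the residual spectrum and rule out degenerate (ghost) Eisenstein classes, and the genuinely delicate case is $n\ge 4$: one must show that \emph{all} of $H_n^{BM}(\mathbb X_n,\mathcal L_{n,G})$ is Eisenstein, i.e.\ that no exotic non-tempered cohomology survives in this particular degree for this local system. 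A complete proof would therefore rest on either pushing the combinatorial vanishing through, or importing enough of Franke's filtration together with the known cuspidal-cohomology vanishing ranges for $\mathrm{GL}_n$, matched carefully against the grading of Proposition~\ref{prop:stein}.
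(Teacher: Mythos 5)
The statement you are trying to prove is labeled a \emph{conjecture} in the paper, and the paper gives no proof of it: the authors explicitly write that Conjecture~\ref{conj:flags} is only the ``reformulation'' obtained by \emph{assuming} Conjecture~\ref{conj:222}, which in turn ``is essentially our guess'' resting on the unproved Conjectures~\ref{conj:1}, \ref{conj:2}, and \ref{conj:int}. Your proposal traces out exactly the same reduction chain: Proposition~\ref{prop:identy} geometrizes $\nabla^-$ and $\Delta^-$ via the correspondences $\mathbb Y^\nabla$ and $\mathbb Y^\Delta$, the invertibility of $\bm\Delta$ off the $r=1$ term would give the primitive decomposition, and Conjecture~\ref{conj:222} would convert $H^{BM}_{m,prim}$ into $H_{m,cusp}$. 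In this respect your proposal agrees with the paper's framing and correctly identifies what remains to be done.

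Where the proposal overreaches is the second half, which is presented as if it could close Conjecture~\ref{conj:222} by ``importing'' Langlands--Franke theory. Two issues deserve attention. First, the parenthetical ``(equivalently Conjectures~\ref{conj:1} and \ref{conj:2})'' is not what the paper asserts: Conjecture~\ref{conj:int} \emph{implies} the vanishing statements, but no converse is claimed, and Conjecture~\ref{conj:2} concerns the asymmetric $\mathcal M_n$ complex, not the $\mathcal M_n^-$ complex that underlies $H_n^{BM}$ via \eqref{eqn:fnnn}. Second, and more seriously, there is a circularity risk in your argument for $n\ge 4$: you would like to conclude that $H^{BM}_{n,prim}$ vanishes because the cuspidal degree range misses $n(n-1)/2$, but that conclusion presupposes the identification $H^{BM}_{n,prim}=H_{n,cusp}$, which is precisely Conjecture~\ref{conj:222}. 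To genuinely rule out ``ghost'' interior classes in that degree that are not cuspidal, you would need an independent argument — either the combinatorial Hodge-theoretic vanishing you mention, or a direct comparison of the $\Delta^-$ boundary maps with Franke's filtration for the local system built from Proposition~\ref{prop:stein}. Until one of those is carried out, the proposal is a plausible research program that mirrors the paper's own conjectural structure, not a proof.
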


Representation theory gives a canonical splitting of cohomology of arithmetic groups into the sum of the cuspidal and 
the remaining (Eisenstein) parts, after tensoring by $\bC$. Our considerations, for $\mathrm{GL}_n(\bZ)$, suggest that we have a splitting over $\bQ$. Namely, 
define 
$$
H_{n,coprim}^{BM}(\mathbb X_{n}, \mathcal L_{n,G}) 
$$
as the quotient by the sum of images of all nontrivial product maps \eqref{eqn:del}. It is tempting to make a companion conjecture:

\begin{conj}
\label{conj:comp}
For every nontrivial finite abelian group $G$ and every $n\ge 1$,
the homomorphism 
$$
H_{n}^{BM}(\mathbb X_n,\mathcal L_{n,G}) \ra
$$
$$
\ra  \bigoplus_{r=1}^n\bigoplus_{\substack{n_1+\cdots +n_r = n\\\mathcal G_\bullet \text{ of length $r$}}} 
H_{n_1,coprim}(\mathbb X_{n_1}, \mathcal L_{n_1,gr_1(\mathcal G_\bullet)}) \otimes \cdots \otimes H_{n_r,coprim}(\mathbb X_{n_r}, \mathcal L_{n_r,gr_r(\mathcal G_\bullet)}) 
$$
is an isomorphism. 
\end{conj}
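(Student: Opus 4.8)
The plan is to derive Conjecture~\ref{conj:comp} from Conjecture~\ref{conj:int} by means of the dictionary supplied by \eqref{eqn:fnnn} and Proposition~\ref{prop:identy}, and in turn to reduce Conjecture~\ref{conj:int} to a purely combinatorial acyclicity statement.

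First I would translate everything into the algebraic language of Section~\ref{sect:co}. By \eqref{eqn:fnnn} one may replace $H_n^{BM}(\mathbb X_n,\mathcal L_{n,G})$ by $\mathcal M_n^-(G)\otimes\bQ$, and by Proposition~\ref{prop:identy} the geometric product and co-product maps \eqref{eqn:del}, \eqref{eqn:del2} coincide with the algebraic maps $\nabla^-$, $\Delta^-$. Since $H_{n,coprim}^{BM}(\mathbb X_n,\mathcal L_{n,G})$ is by construction the quotient of $\mathcal M_n^-(G)\otimes\bQ$ by the sum of the images of all nontrivial product maps, it is exactly $\mathcal M_{n,prim}^-(G)\otimes\bQ$ in the cokernel description \eqref{eqn:prim2}; granting Conjecture~\ref{conj:1}, which identifies the kernel and cokernel descriptions after $\otimes\bQ$, the groups $coprim$ and $prim$ agree. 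Thus Conjecture~\ref{conj:comp} becomes the assertion that the iterated co-multiplication
$$
\mathcal M_n^-(G)\otimes\bQ\ \lra\ \bigoplus_{r\ge 1}\ \bigoplus_{\substack{n_1+\cdots+n_r=n\\ \mathcal G_\bullet\text{ of length }r}}\mathcal M_{n_1,prim}^-(gr_1(\mathcal G_\bullet))\otimes\cdots\otimes\mathcal M_{n_r,prim}^-(gr_r(\mathcal G_\bullet))\otimes\bQ
$$
is an isomorphism, which is precisely the co-multiplication half of the canonical decomposition that Section~\ref{sect:co} deduces from Conjecture~\ref{conj:int}.

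Next I would reduce Conjecture~\ref{conj:int} to acyclicity. By the Remark following that conjecture, invertibility over $\bQ$ of $\tilde{\bm\Delta}=d_{\tilde\Delta}d_{\tilde\nabla}+d_{\tilde\nabla}d_{\tilde\Delta}$ on the larger complex $\tilde{\mathcal M}_\bullet$ implies invertibility of $\bm\Delta$, and since $d_{\tilde\Delta}$ is adjoint to $d_{\tilde\nabla}$ for the standard positive-definite form on the monomial basis, a Hodge-type argument shows that $\tilde{\bm\Delta}$ is invertible away from degree $0$ exactly when $(\tilde{\mathcal M}_\bullet,d_{\tilde\nabla})\otimes\bQ$ has cohomology concentrated in degree $0$. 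This is a statement about generating collections of elements in finite abelian groups, with no geometry left. I would attack it by double induction on $\#G$ and on $n$, filtering the flag complex by the length of the first nontrivial step of $\mathcal G_\bullet$ --- equivalently, using the Solomon--Tits/Steinberg filtration of $\mathcal F_n$ recorded in Proposition~\ref{prop:stein}. On each associated graded piece the differential untwists into a tensor product of bar-type differentials for Steinberg modules of Levi subgroups $\mathrm{GL}_{n_1}\times\cdots\times\mathrm{GL}_{n_r}$, whose acyclicity away from top degree is Solomon--Tits; a K\"unneth argument together with semisimplicity of $\bQ[\mathfrak S_n]$ then assembles these into the desired vanishing. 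An equivalent and perhaps cleaner route is to prove directly that the graded object $\bigoplus_{n,G}\mathcal M_n^-(G)\otimes\bQ$, with its operations $(\nabla^-,\Delta^-)$, is free (equivalently cofree) on its primitives, by exhibiting an explicit Eulerian-type idempotent projecting onto the primitive part.

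The main obstacle, in my view, is the passage from $\bC$ to $\bQ$. Over $\bC$ the decomposition is essentially the Langlands constant-term/Eisenstein-series description of $H^*(\mathrm{GL}_n(\bZ),\mathcal L_{n,G})$ in terms of cuspidal cohomology of Levi subgroups, with $coprim=cusp$ being the content of Conjecture~\ref{conj:222}, and it follows from Franke's theorem and the classical theory of Eisenstein cohomology. What is not automatic is that the idempotents cutting out the cuspidal/primitive summands are defined over $\bQ$ (and integral up to bounded torsion). The maps $\nabla^-$, $\Delta^-$ are visibly defined over $\bZ$ and commute with Hecke operators, so the issue is the rationality of the spectral projectors together with control of the non-split extensions visible in Proposition~\ref{prop:stein}: showing that those extensions contribute nothing new to the primitive part, and that the Eulerian idempotent for $\Delta^-$ is already rational, would require an input genuinely beyond the classical automorphic toolkit, and is where I expect the real work to lie.
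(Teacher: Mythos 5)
The statement you are asked to prove is stated in the paper as Conjecture~\ref{conj:comp}, introduced with the phrase ``It is tempting to make a companion conjecture,'' and the paper offers no proof of it. There is therefore no proof in the paper against which to compare your attempt. Your proposal is likewise not a proof: it explicitly ends by identifying an open gap (``is where I expect the real work to lie''), namely the rationality of the spectral/Eulerian projectors over $\bQ$, and nowhere closes it.

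That said, the conditional reduction you carry out is sound as far as it goes. Using \eqref{eqn:fnnn} and Proposition~\ref{prop:identy} to translate $H_n^{BM}(\mathbb X_n,\mathcal L_{n,G})$ and the maps \eqref{eqn:del}, \eqref{eqn:del2} into $\mathcal M_n^-(G)\otimes\bQ$ with $\nabla^-,\Delta^-$ is correct; and identifying $H_{n,coprim}^{BM}$ with the cokernel description of $\mathcal M_{n,prim}^-$, hence (granting Conjecture~\ref{conj:1}) with the kernel description, is exactly what the paper intends. But this means your argument is conditional on Conjectures~\ref{conj:1} and~\ref{conj:int} in addition to the combinatorial acyclicity you propose as the remaining target, and none of these are established in the paper. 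Your sketch of how to attack the acyclicity via the Solomon--Tits/Steinberg filtration of Proposition~\ref{prop:stein} and a K\"unneth argument is plausible as a program, and your diagnosis that the real difficulty is rationality over $\bQ$ rather than the $\bC$-coefficient decomposition from Eisenstein cohomology matches the spirit of the paper's Remark after Conjecture~\ref{conj:int}. You should, however, present this as a conditional reduction plus a research plan, not as a proof, and state the dependencies on the other conjectures explicitly.
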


\begin{conj}
The composition
$$
H_{n,prim}^{BM}(\mathbb X_n,\mathcal L_{n,G}) \hookrightarrow H_{n}^{BM}(\mathbb X_n,\mathcal L_{n,G}) \twoheadrightarrow H_{n,coprim}^{BM}(\mathbb X_n,\mathcal L_{n,G}) 
$$
is an isomorphism. 
\end{conj}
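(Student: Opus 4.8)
The plan is to combine the two preceding conjectures --- Conjecture~\ref{conj:flags} (the cuspidal decomposition of $H_n^{BM}$) and Conjecture~\ref{conj:comp} (the co-primitive decomposition) --- and to match them term by term along the flag decomposition. First I would unwind what the composition
$$
H_{n,prim}^{BM}(\mathbb X_n,\mathcal L_{n,G}) \hookrightarrow H_{n}^{BM}(\mathbb X_n,\mathcal L_{n,G}) \twoheadrightarrow H_{n,coprim}^{BM}(\mathbb X_n,\mathcal L_{n,G})
$$
does on the summands appearing in Conjecture~\ref{conj:flags}: the source, by definition \eqref{eqn:prim}, is the common kernel of all nontrivial co-multiplications $\Delta$, so under the flag isomorphism of Conjecture~\ref{conj:flags} it picks out exactly the $r=1$ summand $H_{n,cusp}(\mathbb X_n,\mathcal L_{n,G})$ (every term with $r\ge 2$ is, by the compatibility of $\Delta$ with the flag structure established in Section~\ref{sect:co} and re-proved geometrically in Proposition~\ref{prop:identy}, in the image of some $\nabla$ and hence not killed by all $\Delta$'s). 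Dually, the co-primitive quotient $H_{n,coprim}^{BM}$ is the cokernel of all nontrivial products $\nabla$, so under the co-primitive isomorphism of Conjecture~\ref{conj:comp} it is the $r=1$ summand $H_{n,coprim}(\mathbb X_n,\mathcal L_{n,G})$, with all $r\ge 2$ summands of the source mapping to zero.

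The second step is the identification $H_{n,prim}^{BM} = H_{n,cusp} = H_{n,coprim}$ as abstract spaces, which is Conjecture~\ref{conj:222} together with the analogous statement on the co-primitive side (interior cohomology, being the image of compactly supported cohomology in ordinary cohomology, is manifestly self-dual, so "primitive $=$ interior" and "co-primitive $=$ interior" are the same assertion read in two directions). Given these, both the inclusion and the surjection in the displayed composition, \emph{after} passing through the two flag decompositions, become the identity on the common $r=1$ term and zero on everything else --- so the composition is an isomorphism. Concretely: the inclusion $H_{n,prim}^{BM}\hookrightarrow H_n^{BM}$ lands, via Conjecture~\ref{conj:flags}, precisely in the $r=1$ summand $H_{n,cusp}$; the projection $H_n^{BM}\twoheadrightarrow H_{n,coprim}^{BM}$ restricted to that summand is, via Conjecture~\ref{conj:comp}, the canonical isomorphism $H_{n,cusp}\xrightarrow{\sim}H_{n,coprim}$. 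Composing gives the claim.

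The last thing to check is that the two flag decompositions are \emph{the same decomposition}, i.e.\ that the splitting of $H_n^{BM}\otimes\bQ$ produced by Conjecture~\ref{conj:flags} (respecting $\nabla$) and the one produced by Conjecture~\ref{conj:comp} (respecting $\Delta$) agree --- this is exactly the content of the two canonical isomorphisms displayed after Conjecture~\ref{conj:int} in Section~\ref{sect:co}, which follow from the invertibility statement in Conjecture~\ref{conj:int}. With that in hand the matching is forced. I expect \emph{this} compatibility --- that the $\nabla$-flag filtration and the $\Delta$-flag filtration are transverse in the strongest possible way, with the $\bm\Delta$ operator invertible off the $r=1$ piece --- to be the main obstacle: everything else is a formal diagram chase once Conjectures~\ref{conj:int}, \ref{conj:flags}, \ref{conj:comp} and \ref{conj:222} are granted, whereas the invertibility of $\bm\Delta$ is the genuine combinatorial/representation-theoretic input (the Remark after Conjecture~\ref{conj:int} reduces it to vanishing of cohomology of the $d_{\tilde\nabla}$-complex modulo torsion, but that itself is open). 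In the write-up I would therefore state the proof as: "\emph{Assuming Conjectures~\ref{conj:int}, \ref{conj:flags}, \ref{conj:comp}, and \ref{conj:222}, the composition is the identity on the cuspidal summand and zero elsewhere, hence an isomorphism,}" and spell out the one-line diagram chase, flagging the compatibility of the two flag decompositions as the point where the real work sits.
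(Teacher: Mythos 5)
This statement is a conjecture, not a theorem, and the paper offers no proof of it --- so there is no ``paper's own proof'' to compare against; what you have written is a conditional deduction from the other conjectures of Section~\ref{sect:co}, which is the right framing (the paper's commentary before Conjecture~\ref{conj:comp} suggests exactly this).

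That said, there are two points where your argument should be tightened. First, the parenthetical claim that each $r\ge 2$ summand ``is in the image of some $\nabla$ and hence not killed by all $\Delta$'s'' is not a valid inference: being in $\mathrm{im}(\nabla)$ does not by itself prevent an element from lying in $\ker(\Delta)$. The correct way to identify $H_{n,prim}^{BM}$ with the $r=1$ summand of the $\nabla$-decomposition is simply to read Conjecture~\ref{conj:222} ($H_{n,prim}^{BM}=H_{n,cusp}$) together with Conjecture~\ref{conj:flags} (which places $H_{n,cusp}$ as the $r=1$ summand); no additional characterization of $\ker(\Delta)$ is needed. Second, the worry you flag as ``where the real work sits'' --- reconciling the $\nabla$-flag decomposition with the $\Delta$-flag decomposition --- is in fact a red herring, and Conjectures~\ref{conj:int} and \ref{conj:comp} are not needed at all. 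The conjecture is equivalent to the linear-algebra statement
$$
H_n^{BM}(\mathbb X_n,\mathcal L_{n,G}) \;=\; H_{n,prim}^{BM}(\mathbb X_n,\mathcal L_{n,G}) \;\oplus\; \sum \mathrm{im}(\nabla),
$$
and this already follows from Conjectures~\ref{conj:222} and \ref{conj:flags} alone: by \ref{conj:222}, $H_{n,prim}^{BM}=H_{n,cusp}$; by \ref{conj:flags}, $H_n^{BM}=H_{n,cusp}\oplus(\text{sum of the $r\ge2$ summands})$, and those summands land in $\sum\mathrm{im}(\nabla)$; conversely, applying \ref{conj:flags} at each $n_i<n$ and using associativity of the product maps shows $\sum\mathrm{im}(\nabla)$ is contained in the $r\ge2$ part, so the two agree. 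The ``co-primitive analog of \ref{conj:222}'' you invoke implicitly is thus a consequence, not an extra hypothesis.
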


The considerations above fit into a general framework. For $n\ge 1$, let 
$R_n$ be the set of finite-dimensional irreducible representations of $\mathrm{GL}_n(\bZ)$ which appear as direct summands of tensor products of 
\begin{itemize}
\item 
representations of
$$
\mathrm{GL}_n(\hat{\bZ}) = \prod_p \mathrm{GL}_n(\bZ_p) 
$$
\item
irreducible algebraic representations
$$
\rho_{\lambda} : \mathrm{GL}_n(\bQ)\ra \mathsf V_{\lambda}
$$
with highest weight $\lambda$. 
\end{itemize}
Obviously, $R_1$ consists of two elements, and $R_n$ are countable infinite sets for $n\ge 2$. 

Given 
$$
\rho_1\in R_{n_1},\rho_2\in R_{n_2},\rho \in R_{n}, \quad \text{ for } n=n_1+n_2,
$$ 
we can define the multiplicity space 
$$
\mathrm{mult}_{\rho_1,\rho_2}^{\rho} \in \mathrm{Vect}_{\bC},
$$
a finite-dimensional complex vector space, by 
$$
\Hom_{\mathrm{GL}_{n_1}(\bZ)\times \mathrm{GL}_{n_1}(\bZ)} (\rho_{n_1} \boxtimes \rho_{n_2},\rho_{|_{\mathrm{GL}_{n_1}(\bZ)\times \mathrm{GL}_{n_2} (\bZ)}}).
$$
The correspondence 
$$
\mathbb Y^\nabla_{n_1,n_2}
$$ 
gives rise to a natural homomorphism
$$
\mathrm{mult}_{\rho_1,\rho_2}^{\rho} \otimes H^{BM}_{*}(\mathbb X_n, \rho_{n_1}) \otimes H^{BM}_{*}(\mathbb X_{n_2}, \rho_{n_2}) 
\ra H^{BM}_{*}( \mathbb X_n, \rho).
$$
The collection of these can be organized in the following way:  let $\mathcal C$ be a semi-simple (in the countable sense) $\bC$-linear tensor category, 
with countable sums and tensor products commuting with sums, and with simple objects $\epsilon_{\rho}$, corresponding to 
$\rho\in \coprod_{n\ge 1} R_n$; the tensor product given by
$$
\epsilon_{\rho_1} \otimes \epsilon_{\rho_2} = \oplus_{\rho}  \,\,\, \mathrm{mult}_{\rho_1,\rho_2}^{\rho} \otimes_{\bC} \epsilon_{\rho}.
$$
The expression on the right is infinite. Put
$$
\mathcal A_\bullet:=\bigoplus_{n\ge 1} \bigoplus_{\rho\in R_n} H_\bullet^{BM}(\mathbb X_n,\rho\otimes \epsilon_\rho) \in Ob(\mathcal C)
$$
The object 
$\mathcal A_\bullet$ carries the structure of a super-commutative associated $\bZ$-graded nonunital algebra in $\mathcal C$. 
Using chains instead of homology groups gives rise to a  commutative differential $\bZ$-graded nonunital algebra which by Koszul duality can be identified 
with a differential graded Lie algebra (or $\mathsf L_{\infty}$-algebra). The next question is: what is this algebra, or its Koszul dual dg Lie algebra?

The category $\mathcal C$ itself seems to have a description as a category 
of representation of a certain type of an infinite-dimensional semi-group.

In the model example,  consider $R_n^{fin}$, consisting of irreducible representations of the symmetric group $\mathfrak S_n$. 
Then the corresponding analog $\mathcal C^{fin}$ of the category $\mathcal C$
is the a subcategory of Deligne's category of representations of $\mathfrak{gl}_{t}$, where $t$ is a parameter (fractional dimension). 

In the second model example, more relevant to our considerations, 
let $R_{n}^{alg}$ be the set of irreducible algebraic representations
$$
\rho_{\lambda} : \mathrm{GL}_n(\bQ)\ra \mathsf V_{\lambda}
$$
with highest weights $\lambda$. 
Defining multiplicity spaces $\mathrm{mult}_{\rho_1,\rho_2}^\rho$ in a similar fashion, we obtain 
a category 
$\mathcal C^{alg},$ 
which is the category of highest weight representations of the (well-known) central extension 
$$
1\ra \bC^\times \ra \mathsf G\ra \mathrm{GL}_\infty(\bC)^\circ\ra 1,
$$
where $\mathrm{GL}_\infty(\bC)^\circ$ is the connected component of the identity of 
the group 
$$
\{ g\in \Aut_{cont, \bC-mod}(\bC^\infty)\}, \quad \text{ where } \mathbb C^\infty := \bC((t)).
$$
The group $\mathsf G$ acts on a space of countable dimension
$$
\mathsf V := \oplus_{i\in \bZ} \wedge^{\frac{\infty}{2} +i} (\mathbb C^\infty). 
$$
An analog of Weyl-Schur duality says that, for all $n\ge 1$, 
$\mathrm{GL}_n(\bC)$ acts on $\mathsf V^{\otimes n}$, commuting with $\mathsf G$-action, and identifying 
highest weight representations of $\mathsf G$ of level $n$ (i.e., those where the central extension acts with character $z\mapsto z^n$) with 
algebraic irreducible representations of $\mathrm{GL}_n(\bC)$. 

From our perspective, it would be important to identify explicitly the category $\mathcal C^{/p}$, whose simple objects 
correspond to irreducible finite-dimensional representations of the groups $\mathrm{GL}_n(\bF_p)$, $n\ge 1$, and the category 
 $\mathcal C^{p}$, whose simple objects 
correspond to irreducible finite-dimensional continuous 
$\mathrm{GL}_n(\bZ_p)$, $n\ge 1$.

We can develop a similar framework for co-multiplication. 
Given 
$$
\rho_1\in R_{n_1},\rho_2\in R_{n_2},\rho \in R_{n}, \quad \text{ for } n=n_1+n_2,
$$ 
we can define the co-multiplicity space 
$$
\mathrm{comult}^{\rho_1,\rho_2}_{\rho} \in \mathrm{Vect}_{\bC},
$$
a finite-dimensional complex vector space, as
$$
\Hom_{P_{n_1,n_2} (\bZ)} (\rho_{|_{P_{n_1,n_2} (\bZ)}}, \rho_{n_1} \boxtimes \rho_{n_2}),
$$
where 
$$
P_{n_1,n_2}\subset \mathrm{GL}_{n_1}
$$ 
is the stabilizer of the flag $\bZ^{n_1} \subset \bZ^n$. 
The correspondence 
$$
\mathbb Y^{\Delta}_{n_1,n_2}
$$
gives rise to a natural homomorphism
$$
\mathrm{comult}_{\rho_1,\rho_2}^{\rho} \otimes H^{BM}_{*}( \mathbb X_n, \rho) \ra 
H^{BM}_{*}(\mathbb X_n, \rho_{n_1}) \otimes H^{BM}_{*}(\mathbb X_{n_2}, \rho_{n_2}).
$$
We obtain a co-associative co-algebra, without a unit, in a tensor category which is no longer symmetric, a priori. 

Note that there might be nontrivial extensions between two representations from $R_n$, 
which suggests that the definition of the category $\mathcal C$ and algebra $\mathcal A_\bullet$ could be enhanced by considering extension data. 
Also, the category $\mathcal C$ is not rigid, and hence should interpreted not as a category of representations of a group but rather of a semi-group.

Finally, all considerations above can be carried over to the number field case, but in this case, instead of lattices we should consider
all nontrivial finitely-generated torsion-free modules.

\ 

\section{Case $n=2$: modular symbols}
\label{sect:modular}

We recall the definition of modular symbols of weight 2 for 
$$
\Gamma_1(N){:=\left\{\gamma \in \mathrm{SL}_2(\bZ): \gamma=
\left(\begin{array}{cc} 1 & * \\ 0 & 1
\end{array}
\right)\mod N
\right\}}, \quad N\in \bZ_{\ge 2}.
$$
Let $\mathbb M_2(\Gamma_1(N))$ be the $\bQ$-vector space generated by pairs $(c,d)$ with 
$$
c,d \in \bZ/N, \quad \gcd(c,d,N)=1,
$$
and
subject to relations
\begin{enumerate}
\item[(1)] $(c,d)=-(d,-c) \text{  (and hence }=(-c,-d)=-(-d,c) \,)$,
\item[(2)]  $(c,d)+(d,-c-d)+(-c-d,c)=0$.
\end{enumerate}
It is known that  $\mathbb M_2(\Gamma_1(N))$ is naturally identified with Borel-Moore homology group $H_1^{BM}({X_1(N)}, \bQ)$ of the complex 
 modular curve
$$
X_1(N):=\Gamma_1(N)\backslash \mathcal H,
$$
where $\mathcal H$ is the upper half-plane. The symbol $(c,d)$ corresponds to the image in  ${X_1(N)}$ of the geodesic path from $\bf a/\bf c$ to $\bf b/\bf d$, where 
$$\left(\begin{array}{cc} \bf a & \bf b \\ \bf c & \bf d
\end{array}
\right)
 \in \Gamma_1(N)$$
is any element with $c,d=\bf c, \bf d \mod \it N$.

Using (1) we can rewrite (2) as 
\begin{enumerate}
\item[($2'$)]  $(d,c)=(d,c-d)+(d-c,c)$.
\end{enumerate}
Indeed, substituting $c\mapsto -c$ into (2), we obtain
\begin{align*}
0 & \stackrel{(2)}{=}(-c,d)+(d,c-d)+(c-d,-c) \\
    & \stackrel{(1)}{=} - (d,c) + (d,c-d)+(c-d,-c)\\
    &  \stackrel{(1)}{=} - (d,c) + (d,c-d)+(d-c,c)
\end{align*} 



There is an involution on  $\mathbb M_2(\Gamma_1(N))$
$$
\iota: (c,d) \mapsto (-c,d)\stackrel{(1)}{=} -(d,c).
$$
Written in the form $(c,d)\mapsto -(d,c)$ it obviously preserves relations ($2'$) and cyclic anti-symmetry $(1)$.
It corresponds to the automorphism  of  the first homology group coming from the anti-holomorphic involution on $X_1(N)$ associated with the map $\tau\mapsto -\bar{\tau}, \tau \in \mathcal H$, on the universal cover.
Denote by $\mathbb M_2^-(\Gamma_1(N))$  the $(-)$-eigenspace for the involution $\iota$. 
 
\

The dimensions are given by 
$$
\dim(\mathbb M_2(\Gamma_1(N)))=2g+C(N)-1, \, 
\dim(\mathbb M_2^-(\Gamma_1(N)))=g+\frac{C(N)-C_2(N)}{2},
$$
where
\begin{itemize}
\item 
 $g=g(N)$ is the genus of $\overline{X_1(N)}$, which is the same as the dimension of the 
space of cusp forms of weight 2 for $\Gamma_1(N)$ (see the table in Section~\ref{sect:co}),
\item 
$C(N)$ is the number of cusps (elements of $\bP^1(\bQ)/\Gamma_1(N)$), and 
\item $C_2(N)$ is the number of cusps fixed by the anti-holomorphic involution described above.
\end{itemize}

For $N=1,2,3,4$, $C(N)=C_2(N)=1,2,2,3$, respectively; and for 
$N\ge 5$, the cardinalities $C(N),C_2(N)$ are given by
$$
C(N)=\frac{1}{2}\sum_{d|N} \phi(d)\,\phi(N/d),
$$
$$C_2(N)=\left\{ \begin{array}{ll} \phi(N) +\phi(N/2)& \text{ if } N \text{ is even,}\\
\phi(N)& \text{ if } N \text{ is odd.}\end{array}\right.$$

\ 

 Now we discuss the relation to our groups $\mathcal M_2(\bZ/N\bZ)$ and $\mathcal M_2^-(\bZ/N\bZ)$.

\begin{prop}\label{prop:iso2}
$\mathcal M_2^-(\bZ/N \bZ) \otimes \bQ$ is  isomorphic to $\mathbb M_2^-(\Gamma_1(N))$ .
\end{prop}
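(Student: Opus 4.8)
The plan is to present both $\bQ$-vector spaces by explicit generators and relations and to check that, after a short bookkeeping translation, the two presentations coincide; the isomorphism will then be the tautological assignment $\langle a_1,a_2\rangle\leftrightarrow(a_1,a_2)$.

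First I would write down the presentation of the left-hand side. By the definition of $\mathcal M_n^-(G)$ together with the description of $\mathcal M_2(\bZ/N\bZ)$ recalled in Example~\ref{exam:mot}, the space $\mathcal M_2^-(\bZ/N\bZ)\otimes\bQ$ is generated by symbols $\langle a_1,a_2\rangle$, with $a_1,a_2\in\bZ/N\bZ$ and $\gcd(a_1,a_2,N)=1$, subject to: (S) $\langle a_1,a_2\rangle=\langle a_2,a_1\rangle$; (M) $\langle a_1,a_2\rangle=\langle a_1,a_2-a_1\rangle+\langle a_1-a_2,a_2\rangle$ for all pairs; and the anti-invariance relation $\langle -a_1,a_2\rangle=-\langle a_1,a_2\rangle$. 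On the right-hand side, since $\iota$ is an involution of the $\bQ$-vector space $\mathbb M_2(\Gamma_1(N))$, its $(-1)$-eigenspace $\mathbb M_2^-(\Gamma_1(N))$ is canonically the quotient of $\mathbb M_2(\Gamma_1(N))$ by the subspace spanned by the elements $(c,d)+\iota(c,d)=(c,d)+(-c,d)$. Thus $\mathbb M_2^-(\Gamma_1(N))$ is generated by symbols $(c,d)$ with $\gcd(c,d,N)=1$, subject to relations (1), (2) of $\mathbb M_2(\Gamma_1(N))$ and the new relation $(-c,d)=-(c,d)$.

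Next I would match the relations. Combining the cyclic anti-symmetry (1), $(c,d)=-(d,-c)$, with the new relation $(-c,d)=-(c,d)$ yields at once the full symmetry $(c,d)=(d,c)$ and anti-invariance in the second slot $(c,-d)=-(c,d)$; so in $\mathbb M_2^-(\Gamma_1(N))$ the symbols $(c,d)$ obey precisely (S) and the anti-invariance relation. Using these, relation (2) rewrites as $(c,d)=(c+d,d)+(c,c+d)$, and a single substitution — read this identity with $(c,d-c)$ in place of $(c,d)$, then simplify by (S) and anti-invariance — converts it into exactly relation (M); conversely (M) together with (S) and anti-invariance recovers $(c,d)=(d,c)$, (1) and (2) by running the same steps in reverse. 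Hence the two presentations have the same generating set and equivalent relations, so the assignments $\langle a_1,a_2\rangle\mapsto(a_1,a_2)$ and $(c,d)\mapsto\langle c,d\rangle$ are well-defined mutually inverse homomorphisms, which is the claimed isomorphism.

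I do not anticipate a genuine difficulty here — the argument is elementary relation-chasing — but the step that must be handled with care is the passage to the $(-1)$-eigenspace: over $\bZ$ this eigenspace and the quotient of $\mathbb M_2$ by the image of $1+\iota$ differ by $2$-torsion, and accordingly $\mathcal M_2^-(\bZ/N\bZ)$ itself has $2$-torsion (for instance the degenerate class satisfies $\langle a,0\rangle=\langle 0,a\rangle=-\langle 0,a\rangle$ and hence is killed by $2$), so the clean matching of presentations holds only after tensoring with $\bQ$. This is also consistent with the earlier identification $\mathcal M_2^-(\bZ/N\bZ)\otimes\bQ=H^{1}(\Gamma(\bZ/N\bZ,2),\ori_2^{\otimes 2})$ and the classical realization of weight-$2$ modular symbols inside $H_1^{BM}(X_1(N),\bQ)$.
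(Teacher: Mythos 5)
Your proof is correct and follows essentially the same approach as the paper: realizing $\mathbb M_2^-(\Gamma_1(N))$ (over $\bQ$) as the quotient of $\mathbb M_2(\Gamma_1(N))$ by the image of $1+\iota$, writing it in terms of generators and relations, and observing that the resulting relations (1), (2) plus $(-c,d)=-(c,d)$ translate, via the derived symmetry and sign relations, precisely into (S), (M), and the defining anti-invariance relation \eqref{eqn:anti} of $\mathcal M_2^-$. The paper streamlines the relation-chasing by first deriving ($2'$) from (1) and (2) inside $\mathbb M_2$ itself and then matching (R1)--(R3) to (S), (M), \eqref{eqn:anti} in one step, but this is only a reorganization of the same computation, and your side remark about 2-torsion distinguishing the $(-1)$-eigenspace from the quotient over $\bZ$ is a correct and useful clarification.
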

\begin{proof}
Indeed, the subspace $\mathbb M_2^-(\Gamma_1(N))$ (or, better, quotient space) can be described in terms of generators and relations as
\begin{enumerate}
\item[(R1)] $(a_1,a_2)^-=(a_2,a_1)^-$
\item[(R2)] $(a_1,a_2)^-=(a_1,a_2-a_1)^- +(a_1-a_2,a_2)^-$
\item[(R3)] $(a_1,a_2)^-=-(a_2,-a_1)^-$
\end{enumerate}

Here (R3) is the same as (1), (R2) is the same as (2'), and (R1) is $\iota$-invariance.
Therefore,  the natural map 
 $$\mathcal M_2^-(\bZ/N \bZ) \otimes \bQ \stackrel{\sim}{\lra}   \mathbb M_2^-(\Gamma_1(N)),\quad  \langle a_1,a_2\rangle ^-   \mapsto  (a_1,a_2)^-$$
is an isomorphism, as relations (R1),(R2),(R3) are exactly the defining relations for $\mathcal M_2^-(\bZ/N \bZ)$. \end{proof}

\

Note that 
$$
(a,0)^-=(0,a)^- = 0\in \mathbb M_2^-(\Gamma_1(N)),
$$
by (R1) and (R3). 
Incidentally, relation (R2) can be replaced by the co-vector version
\begin{enumerate}
\item[$(\rm R2^*)$] $(a_1,a_2)^-=(a_1+a_2,a_2)^- +(a_1,a_1+a_2)^-$
\end{enumerate}
Indeed, substitute $a_1\mapsto a_1, a_2\mapsto a_1+a_2$ into relation (R2) and use dihedral symmetry by (R1) and (R3).

\

As a corollary of Conjectures \ref{conj:1} and \ref{conj:2}, together with the guesses
$$
\dim(\mathcal M_{2,prim}(\bZ/N\bZ)\otimes\bQ)=  \dim(\mathcal M_{2,prim}^-(\bZ/N\bZ)\otimes\bQ)=g(N),
$$
we would obtain a formula which follows from Proposition 
\ref{prop:iso2}:
\begin{align*} \dim (\mathcal M_2^-(\bZ/N \bZ)\otimes \bQ)  {=}   & g(N)+\frac{1}{4}\sum_{d|N,3\le d\le N/3}\phi(d)\,\phi(N/d)\\
\stackrel{\text{for all }N\ge 1}{ =} \dim(\mathbb M_2^-(\Gamma_1(N)))=& g(N)+\frac{C(N)-C_2(N)}{2} 
\end{align*}
and a \emph{hypothetical} formula: 
\begin{align*}
\dim (\mathcal M_2(\bZ/N \bZ)\otimes \bQ)  \stackrel{?}{=} & \,\,g(N)+\frac{1}{2}\sum_{d|N,d\ge 3}\phi(d)\,\phi(N/d) \\
\stackrel{\text{for }N\ge 5}{ =} & \,\,g(N)+C(N)-\frac{C_2(N)}{2} 
\end{align*}

Presumably, one can deduce the above formula using the relation between the Steinberg module and the module $\mathcal F_2$
(see  Proposition~\ref{prop:stein}).
The formulas for dimensions simplify when $N=p\ge 5$ is a prime:
$$g(p)=\frac{(p-5)(p-7)}{24}, \quad C(p)=C_2(p)=p-1,$$
$$\dim(\mathcal M_2^-(\bZ/p \bZ)\otimes \bQ)=\dim(\mathbb M_2^-(\Gamma_1(p)))=g(p)
$$
\begin{equation}\label{eq:p_dim2}
\dim(\mathcal M_2(\bZ/p \bZ)\otimes \bQ)\stackrel{?}{=} \frac{p^2+23}{24}=g(p)+\frac{p-1}{2}.
\end{equation}

\

The rest of the section will be devoted to a direct proof of  \eqref{eq:p_dim2}.

 We have two maps
\begin{align}\label{eq:easyepi}
\mathcal M_2(\bZ/p \bZ)&\twoheadrightarrow \mathcal M_2^-(\bZ/p \bZ),\quad \langle a,b\rangle\mapsto \langle a,b\rangle^-\\
\label{eq:deltaepi}
\mathcal M_2(\bZ/p \bZ)&\stackrel{\Delta}{\rightarrow} \mathcal M_1(1)\otimes 
\mathcal M_1^-(\bZ/p \bZ)=\mathcal M_1^-(\bZ/p \bZ),
\end{align}
where the second map \eqref{eq:deltaepi}
is the (only possible) co-product map  given by
$$\langle a,b \rangle \mapsto (1-\delta_{a,0})\langle a \rangle^-+(1-\delta_{b,0})\langle b \rangle^-.$$
The first map \eqref{eq:easyepi} is  surjective by definition, and second map \eqref{eq:deltaepi} is surjective up to 2-torsion: its right inverse after tensoring with $\bQ$ is given by 
\begin{equation}\label{eq:rinverse}
\langle a \rangle^-\mapsto \frac{1}{2}\big(
\langle a,0\rangle -\langle -a,0 \rangle\big)
\end{equation}

The validity of  formula \eqref{eq:p_dim2}
 follows from the following result:

\begin{prop}
\label{prop:m2}
The map given by the sum of  \eqref{eq:easyepi} and \eqref{eq:deltaepi}:
$$
\mathcal M_2(\bZ/p\bZ)\to  \mathcal M_2^-(\bZ/p \bZ)\oplus \mathcal M_1^-(\bZ/p \bZ)
$$ 
is an isomorphism up to torsion.
\end{prop}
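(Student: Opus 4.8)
The plan is to work over $\bQ$ throughout (``isomorphism up to torsion'' means after $\otimes\bQ$) and to establish two facts about $\Phi := (\mu^-,\Delta)\colon \mathcal M_2(\bZ/p\bZ)\otimes\bQ \to \mathcal M_2^-(\bZ/p\bZ)\otimes\bQ \oplus \mathcal M_1^-(\bZ/p\bZ)\otimes\bQ$: that it is surjective, and that $\dim_\bQ \mathcal M_2(\bZ/p\bZ)\otimes\bQ$ is at most the dimension of the target. Together these force $\Phi$ to be an isomorphism, and simultaneously yield \eqref{eq:p_dim2}. The target dimension is known: by Proposition~\ref{prop:iso2} and the classical formula, $\dim_\bQ \mathcal M_2^-(\bZ/p\bZ)\otimes\bQ = \dim\mathbb M_2^-(\Gamma_1(p)) = g(p)$, and since $\mathcal M_1(\bZ/p\bZ) = \bigoplus_{a\in(\bZ/p\bZ)^\times}\bZ\langle a\rangle$ we get $\dim_\bQ \mathcal M_1^-(\bZ/p\bZ)\otimes\bQ = \phi(p)/2 = (p-1)/2$, so the target has dimension $g(p)+(p-1)/2 = (p^2+23)/24$.

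Surjectivity is the easy half. The right inverse $s$ of \eqref{eq:rinverse}, $\langle a\rangle^-\mapsto \tfrac12(\langle a,0\rangle-\langle -a,0\rangle)$, is well defined over $\bQ$ and satisfies $\Delta\circ s = \mathrm{id}$; moreover $\mu^-\circ s = 0$, because the degenerate symbols $\langle a,0\rangle$ vanish in $\mathcal M_2^-(\bZ/p\bZ)\otimes\bQ$ (one has $\langle 0,a\rangle^- = \langle -0,a\rangle^- = -\langle 0,a\rangle^-$, whence $2\langle 0,a\rangle^- = 0$). Hence $\{0\}\oplus\mathcal M_1^-(\bZ/p\bZ)\otimes\bQ$ lies in the image of $\Phi$; combining this with the surjectivity of $\mu^-$ on generators and the identity $\Phi(\langle a,b\rangle) - \Phi\big(s\langle a\rangle^- + s\langle b\rangle^-\big) = (\langle a,b\rangle^-,0)$ for nondegenerate symbols, we conclude that $\Phi$ is onto.

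It remains to prove $\dim_\bQ \mathcal M_2(\bZ/p\bZ)\otimes\bQ \le g(p)+(p-1)/2$. Via the tautological exact sequence $0\to K\to \mathcal M_2(\bZ/p\bZ)\otimes\bQ \xrightarrow{\mu^-} \mathcal M_2^-(\bZ/p\bZ)\otimes\bQ\to 0$, this is equivalent to $\dim_\bQ K \le (p-1)/2$, where $K$ is the $\bQ$-span of the elements $v_{a,b} := \langle a,b\rangle+\langle -a,b\rangle$. Here the plan is: (i) using the symmetry relation (S), relation (M) (which in particular gives $\langle a,a\rangle = 2\langle a,0\rangle$), and the ``averaging'' identity
\[
\sum_{y\in\bZ/p\bZ}\langle y,\,a-y\rangle = 0 \qquad\text{in }\mathcal M_2(\bZ/p\bZ)
\]
---which follows by telescoping relation (M) applied to $\langle a,ka\rangle$, $k=1,\dots,p$---rewrite each $v_{a,b}$ as a $\bQ$-linear combination of the degenerate symbols $\langle c,0\rangle$, $c\in(\bZ/p\bZ)^\times$; and (ii) establish the relation $\langle c,0\rangle + \langle -c,0\rangle = 0$, equivalently $\langle c,c\rangle + \langle -c,-c\rangle = 0$, in $\mathcal M_2(\bZ/p\bZ)\otimes\bQ$. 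Granting (i) and (ii), the space $K$ is spanned by the $\langle c,0\rangle$ subject to $\langle -c,0\rangle = -\langle c,0\rangle$, so $\dim_\bQ K \le (p-1)/2$, which completes the argument.

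The main obstacle is step (ii), with step (i) a secondary difficulty: the vanishing of $\langle c,0\rangle+\langle -c,0\rangle$ is the one relation that is \emph{not} formally visible in the presentation, and it is really the heart of the matter --- it expresses that the ``Eisenstein/boundary'' part of $\mathcal M_2(\bZ/p\bZ)$ is exactly detected by the co-multiplication $\Delta$ to $\mathcal M_1^-(\bZ/p\bZ)$ and is no larger. I expect to prove it either by a careful manipulation of relation (M) exploiting the $\mathrm{Aut}(\bZ/p\bZ)=(\bZ/p\bZ)^\times$-action (summing the averaging identity over orbits so as to isolate the isotypic part on which $-1$ acts by $-1$), or, more structurally, by passing to the identification $\mathcal M_2(\bZ/p\bZ)\otimes\bQ = H_0(\Gamma(\bZ/p\bZ,2),\mathcal F_2)$ and the three-step filtration of $\mathcal F_2$ from Proposition~\ref{prop:stein}, whose graded pieces are orientation-twisted Steinberg modules ($\mathrm{St}_2$, and copies of $\mathrm{St}_1\cong\bZ$ indexed by the rational lines in $\bQ^2$), with classically computable $\Gamma(\bZ/p\bZ,2)$-coinvariants of total dimension $g(p)+(p-1)/2$.
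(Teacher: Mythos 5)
Your reduction is set up correctly: the high-level scheme (surjectivity of $\Phi=(\mu^-,\Delta)$ plus a dimension bound on the source) is logically equivalent to what the paper does, which is to show that $\ker(\mu^-)$ coincides modulo torsion with the image of the right inverse $s$ of \eqref{eq:rinverse}. Your surjectivity argument is complete and correct, including the observation that $\mu^-\circ s=0$ because degenerate symbols die in $\mathcal M_2^-$. Your step (i) is also essentially the paper's Lemma~\ref{lemm:below2}, obtained there by iterating the identity $\langle a,b\rangle+\langle a,-b\rangle=\langle a,b-a\rangle+\langle a,-b+a\rangle$ and choosing $m$ with $ma=b$; your proposed averaging identity $\sum_{y}\langle y,a-y\rangle=0$ is a correct consequence of (M) but is not, by itself, what is needed to express $v_{a,b}$ in terms of $\langle c,0\rangle$ --- the cleaner route is the iterated two-term relation.

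The genuine gap is exactly where you flag it: step (ii), the vanishing of $\bm\delta_a:=\langle c,0\rangle+\langle -c,0\rangle$ in $\mathcal M_2(\bZ/p\bZ)\otimes\bQ$, is the entire content of the proposition and you do not prove it. Your two proposed routes are left as expectations, not arguments; the first (``averaging over $(\bZ/p\bZ)^\times$-orbits'') has no visible mechanism for producing a nonzero scalar factor, and the second (Steinberg filtration) would amount to a different proof that you have not carried out. The paper's Lemma~\ref{lemm:below} handles this by a delicate global double-count: it first shows $2\bm\delta_a=2\bm\delta_b$ for all nonzero $a,b$ (so one may work with $\bm\delta=\bm\delta_1$), then evaluates $\sum_{a,b\neq 0}\bigl(\langle a,b\rangle+\langle b,-a\rangle\bigr)$ in two ways --- directly via Lemma~\ref{lemm:below2} to get $(p-1)^2\bm\delta$, and after expanding each term by (M) and reindexing to get $2p(p-1)\bm\delta$ --- yielding $(p^2-1)\bm\delta=0$, hence $\bm\delta=0$ over $\bQ$. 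This identity is not ``formally invisible'' so much as it requires a specific summation trick; without it (or an equally concrete substitute, e.g., a completed Steinberg-filtration computation), the dimension bound and hence the proposition remain unproven.
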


\begin{proof}

We will check (after tensoring with $\bQ$) that the kernel of \eqref{eq:easyepi} is generated by the image of \eqref{eq:rinverse}.

By definition \eqref{eqn:anti}, the kernel of \eqref{eq:easyepi} is spanned by  elements 
\begin{equation}\label{eq:genker}\langle a,b\rangle  + \langle a,-b\rangle  \in\mathcal M_2(\bZ/p\bZ).\end{equation}

\begin{lemm}
\label{lemm:below2}
For all $a,b,\in \bZ/p\bZ$, with $a\neq 0$, we have 
\begin{equation*}
\label{eqn:app}
\langle a,b\rangle  + \langle a,-b\rangle  = 2\cdot \langle a,0\rangle \in \mathcal M_2(\bZ/p\bZ).
\end{equation*}
\end{lemm}

\begin{proof}

We  have from (M):
\begin{align*}
\langle a,b\rangle  & = \langle a-b,b\rangle  + \langle a,b-a\rangle\\
\langle a-b,a\rangle & = \langle -b,a\rangle  + \langle a-b, b\rangle.
\end{align*}
Taking the difference between the first and the second line, we obtain
$$
\langle a,b\rangle  + \langle -b,a\rangle  =  \langle a,b-a\rangle  + \langle a,a-b\rangle,
$$
which we can write, by (S), as
$$
\langle a,b\rangle  + \langle a,-b\rangle  =  \langle a,b-a\rangle  + \langle a,-b+a\rangle.
$$
Iterating this, we get
$$
\langle a,b\rangle  + \langle a,-b\rangle  =  \langle a,b-ma\rangle  + \langle a,-b+ma\rangle, \quad m=1,\ldots, p. 
$$
For $a\neq 0 \pmod p$, there is an $m$ solving the equation $ma=b\pmod p$, which implies the claimed identity
\begin{equation}
\label{eqn:app}
\langle a,b\rangle  + \langle a,-b\rangle  = 2\cdot \langle a,0\rangle. 
\end{equation}
\end{proof}

\begin{coro} \label{cor:easyker} The kernel of \eqref{eq:easyepi} is spanned by elements
$$2\cdot\langle a,0\rangle,\quad \langle a,0\rangle+\langle-a,0\rangle\in  \mathcal M_2(\bZ/p\bZ)\qquad (a\ne 0\in \bZ/p\bZ).$$
\end{coro}
\begin{proof} The case $a\ne 0$ in \eqref{eq:genker} is covered by  Lemma \ref{lemm:below2}. If $a=0$ then we rename variable $b\rightsquigarrow a$ and apply symmetry relation (S).
\end{proof}
\begin{lemm}
\label{lemm:below}
For all $a\in \bZ/p\bZ, a\neq 0$, we have
$$
\langle a,0\rangle + \langle -a,0\rangle =0 \in \mathcal M_2(\bZ/p\bZ)\otimes \bQ.
$$
\end{lemm}

\begin{proof}
Replacing $a$ by $-a$ in \eqref{eqn:app} and adding the equations, we obtain
$$
\big(\langle a,b\rangle +   \langle -a,b\rangle\big)  + \big(\langle a,-b\rangle      + \langle -a,-b\rangle\big)  = 2\cdot \big( \langle a,0\rangle +  \langle -a,0\rangle\big).
$$ 
Using  again \eqref{eqn:app}, with $a$ replaced by $b$, respectively $-b$, we find
\begin{equation}
\label{eqn:22}
2\cdot \big( \langle b,0\rangle +  \langle -b,0\rangle\big) = 2\cdot \big( \langle a,0\rangle + \langle -a,0\rangle\big), 
\end{equation}
for all $a,b\neq 0$. 
To show the vanishing of 
$$
{\bm{\delta}}:=
\langle 1,0\rangle + \langle -1,0\rangle  \in \mathcal M_2(\bZ/p\bZ)\otimes \bQ
$$
consider the sum
\begin{equation*}
    \sum_{a,b \neq 0}  \big(\langle a,b \rangle  + \langle b, -a \rangle \big)= 2(p-1) \cdot \sum_{b\neq 0}  \langle b,0\rangle = 
(p-1)^2\bm{\delta},
\end{equation*}
here we substituted \eqref{eqn:app} and \eqref{eqn:22}.
Apply the blow-up relation (M) to each term and relate
  to the original sum:
  \begin{multline*}
   \stackrel{({\mathrm M})}{=} \sum_{a,b \neq 0} \langle  a - b, b  \rangle  +     \sum_{a,b \neq 0} \langle a , b-a  \rangle   +    \sum_{a,b \neq 0}
    \langle  b +a, -a  \rangle  +        \sum_{a,b \neq 0} \langle b, -a - b \rangle  =\\
   \stackrel{({\mathrm S})} {=}    
    4\sum_{b \neq 0, a \neq -b} \langle  a , b  \rangle  =\\
    =4\sum_{a,b \neq 0} \langle  a , b  \rangle+4\sum_{a\ne 0}\langle a,0\rangle-4\sum_{a\ne 0}\langle a,-a\rangle=\\ 
 =2 (p-1)^2\, \bm{\delta}  + 2 (p-1)
 \,\bm{\delta}
 =2p(p-1) \,\bm{\delta}\\ 
   \end{multline*}
After the blow-up relation, we changed the variables in the summation using symmetry relation,
 then related to the original range of the summation with discrepancy terms, and 
 used the relations
\begin{equation*}
   \sum_{a \neq 0} \big(\langle a, 0 \rangle  + \langle -a,0\rangle\big)  = (p-1) \,\bm{\delta}
 \end{equation*}
 and
 $$
 \langle a, -a\rangle =0\qquad \impliedby
 \qquad \langle a,0\rangle \stackrel{(\mathrm M)}{=} \langle a,0\rangle + \langle a,-a\rangle.
 $$ 
Finally, we obtain
\begin{equation*}
  (p-1)^2  \,\bm{\delta} = 2 p (p-1)\, \bm{\delta},
\end{equation*}
which implies
\begin{equation}
\label{eqn:psq}
  (p^2 - 1) \, \bm{\delta}  = 0  \in \mathcal M_2(\bZ/p\bZ).
\end{equation}
It follows that for all $a\neq 0$ we have the claimed identity
\begin{equation*}
\label{eqn:aa}
 \langle a,0\rangle + \langle -a,0\rangle =0 \in \mathcal M_2(\bZ/p\bZ)\otimes \bQ.
\end{equation*}
\end{proof}

Now we are ready to finish the proof of Proposition  \ref{prop:m2}.  By Corollary   \ref{cor:easyker} and Lemma \ref{lemm:below}, the kernel of \eqref{eq:easyepi} is spanned (up to torsion) by elements of the form $\langle a,0 \rangle$. Using again  Lemma \ref{lemm:below} we see that these elements can be written as
$$ \langle a,0 \rangle=   \frac{1}{2}\big(
\langle a,0\rangle -\langle -a,0 \rangle\big)\in \mathcal M_2(\bZ/p\bZ)\otimes \bQ $$
 Therefore, we get exactly the image of the right inverse \eqref{eq:rinverse}.
\end{proof}

\begin{rema}
The factor $(p^2-1)$  in \eqref{eqn:psq} gives a partial explanation for the experimentally observed jumping behavior of 
$\dim(\mathcal M_2(\bZ/p\bZ)\otimes \mathbb F_{\ell})$, for primes $\ell \mid (p\pm 1)$, 
see Section~\ref{sect:exp}.
\end{rema}

\section{Experiments}
\label{sect:exp}

Here we present results of numerical experiments, 
performed using a fast linear algebra solver \cite{spasm}. We computed dimensions of 
$$
\mathcal B_n(\bZ/N\bZ), \quad \mathcal M_n(\bZ/N\bZ)
$$
over $\bQ$ and various finite fields.  
The size of the (very sparse) matrices grows as $\sim N^n$. For example, for $n=5$ and $N=81$, the  part of constraints corresponding to $k=2$ in (B) or 
(M), gives $\sim 3\cdot 10^8$ equations on $\sim 3\cdot 10^7$ variables, with $\sim 10^9$ non-zero coefficients. This overdetermined system has a unique (up to scalar) nontrivial solution in $\bQ$. The calculation takes about 4 hours. 

\

Numerically, we found:
\begin{itemize}
\item For $p$ a prime, 
$$
\dim(\mathcal B_2(\bZ/p\bZ)\otimes {\bQ}) = \frac{p^2-1}{24} +1= \frac{p^2+23}{24},
$$
while the difference
$$
\Delta_{2,\ell}(\bZ/p\bZ):=\dim(\mathcal B_2(\bZ/p\bZ)\otimes {\mathbb F_\ell}) -  \frac{p^2+23}{24} 
$$
varies significantly; there are frequent jumps when $\ell\mid  (p\pm 1)$, e.g., 
$$
\Delta_{2,31} (\bZ/61\bZ)=1.
$$
\item 
For $p$ a prime, 
$$
\Delta_{3,\bQ}(\bZ/p\bZ) :=\dim(\mathcal B_3(\bZ/p\bZ)\otimes {\bQ}) -\frac{(p-5)(p-7)}{24}  = 0
$$
for all primes up to 41, but 
$$
\Delta_{3,\bQ}(\bZ/p\bZ) = 1, \quad \text{ for } \, p=43, 59, \ldots
\,.
$$
\item The difference
$$
\Delta_{3,\ell}(\bZ/p\bZ):=\dim(\mathcal B_3(\bZ/p\bZ)\otimes {\mathbb F_{\ell}}) -  \frac{(p-5)(p-7)}{24} 
$$
also jumps for many $\ell \mid (p\pm 1)$. 
\item For all primes $p$ up to 41 we have $\dim(\mathcal B_4(\bZ/p\bZ)\otimes \bQ)=0$, but
$$
 \dim(\mathcal B_4(\bZ/p\bZ)\otimes \bQ)=1,  \text{ for }  p= 43, 59, \ldots\,.
$$
\end{itemize}

On the next page we present a more systematic table of dimensions. 
All dimensions, for $\bQ$-coefficients, are compatible with the conjectures in Section~\ref{sect:co}. 
The items in bold indicate the smallest $N$ for which the rank is positive.

\

\begin{itemize}
\item $\dim(\mathcal B_n(\bZ/N\bZ)\otimes \bQ)=\dim(\mathcal M_n(\bZ/N\bZ)\otimes \bQ)$ for $n=2,3$:
\end{itemize}          
\

\small{\begin{tabular}{c|c|c|c|c|c|c|c|c|c|c|c|c|c|c|c|c|c|c}
$N$    &  2  & 3 &  4 & 5 & 6 & 7 & 8 & 9 & 10 & 11 & 12 & 13 & 14 & 15 & 16 & 17 & 18  \\
\hline
\hline
\rule{0pt}{3ex}
$n$=2 & 0 & \bf{1} & 1  &  2 & 2 & 3 & 3 & 5 & 4 &6& 7 &8 & 7 &13 &10 &13 & 12\\
\rule{0pt}{2ex}
$n$=3 & 0 & 0 & 0  & 0 & 0 & 0&0& \bf{1} & 0 &1 & 2 & 2 &1 & 5 &3 &5& 5
\end{tabular}
}

\

\small{
\begin{tabular}{c|c|c|c|c|c|c|c|c|c|c|c|c|c|c|c|c|c|c}
$N$  &19&20&21&22&23&24 &25& 26&27&28&29&\dots &180 &181\\
\hline
\hline
\rule{0pt}{3ex}
$n$=2 & 16&17&23&16&23&23&30& 22& 34 & 31& 36 &\dots &   989& 1366 \\
\rule{0pt}{2ex}
$n$=3 & 7 & 7 &11& 7 &12&13&16&12&21&17&22&\dots& 1740 &1276
\end{tabular}
}

\

\

\begin{itemize}
\item 
\normalsize{$\dim(\mathcal B_n(\bZ/N\bZ)\otimes \bQ)=\dim(\mathcal M_n(\bZ/N\bZ)\otimes \bQ)$ for $n=4$:}
\end{itemize}
\

\small{
\begin{tabular}{c|c|c|c|c|c|c|c|c|c|c|c|c|c|c|c|c|c|c}
$N$ &27&28&29&30&31&32&33&34&35&36 &\dots&105&106&107\\
\hline
\hline
\rule{0pt}{3ex}
$n$=4 & \bf{1} & 0 & 0 & 0&0&0&2&0 &0&3 &\dots& 114 & 0 &3
\end{tabular}
}

\

\

\begin{itemize}
\item 
\normalsize{$\dim(\mathcal M_{4,prim}^-(\bZ/N\bZ)\otimes \bQ) =0$ for $N\le 242$:}
\end{itemize}
\

\begin{itemize}
\item 
\normalsize{$\dim(\mathcal B_n(\bZ/N\bZ)\otimes \bQ)=\dim(\mathcal M_n(\bZ/N\bZ)\otimes \bQ)$ for $n=5$:}
\end{itemize}

\

\small{
\begin{tabular}{c|c|c|c|}
$N$ &\dots$\le$ 80&81&82\\
\hline
\hline
\rule{0pt}{3ex}
$n$=5& 0 & \bf{1} & 0 
\end{tabular}
}

\

\begin{itemize}
\item 
\normalsize{$\dim(\mathcal B_n(\bZ/N\bZ)\otimes \bF_2)$ and $\dim(\mathcal M_n(\bZ/N\bZ)\otimes \bF_2)$ for $n=2,3,4,5$:}
\end{itemize}

\

\small{
\begin{tabular}{c|c|c|c|c|c|c|c|c|c|c|c|}
$N$    &  2  & 3 &  4 & 5 & 6 & 7 & 8 & \dots & 16 & \dots  & 32 \\
\hline
\hline
\rule{0pt}{3ex}
$\mathcal{B}_2$  &0 &\bf{1}&1&2&3&4&4&\dots &13&\dots& 44\\
\rule{0pt}{2ex}
$\mathcal{M}_2$ &\bf{1}&2&3&5&5&8&8&\dots & 21& \dots  &60\\
\hline
\rule{0pt}{3ex}
$\mathcal{B}_3$ &0&0&0&0&0&\bf{1}&1&\dots&8 &\dots& 43\\
\rule{0pt}{2ex}
$\mathcal{M}_3$  &0&0&\bf{1}&1&3&2&5&\dots &  21 &\dots &87  \\
\hline
\rule{0pt}{3ex}
$\mathcal{B}_4$ & 0&0&0&0&0&0&0&\dots &\bf{1} &\dots&12\\
\rule{0pt}{2ex}
$\mathcal{M}_4$  &0&0&0&0&0&0&\bf{1}&\dots &9 &\dots &  55 \\
\hline
\rule{0pt}{3ex}
$\mathcal{B}_5$ & 0&0&0&0&0&0&0&\dots & 0 &\dots&\bf{1}\\
\rule{0pt}{2ex}
$\mathcal{M}_5$  &0&0&0&0&0&0& 0&\dots &\bf{1} &\dots &  13
\end{tabular}
}

\

\

Equations (B) in Section~\ref{sect:intro} are labeled by pairs of positive integers $n,k$, where $n$ is the dimension and $2\le k\le n$. 
{Computer experiments show a remarkable property of  our equations: for given $n$ {\it and} $k$, the highly overdetermined subsystem of linear equations (B) or (M)
 (and assuming implicitly (S), the symmetry property)
has a very large space of solutions, usually much larger than the whole system for given $n$,
which is the conjunction of subsystems for $k=2,\dots,n$ (or just the subsystem for $k=2$, see Lemma~\ref{lemm:chi} in Section~\ref{sect:gen-rel}). 
We have no explanation for this striking fact.}
There are no obvious actions of Hecke operators on the solution spaces $n,k$ individually,  for $k>2$, and it is very surprising 
that the highly overdetermined systems admit any nontrivial solution at all.  


\
\begin{itemize}
\item
$\bQ$-ranks of partial systems $\mathcal B_{n,k}$ and $\mathcal M_{n,k}$ for $k\ge 3$, and for some primes and composite numbers $N$:
\end{itemize}

\

\small{
\begin{tabular}{c|c|c|c|c|c|c|c|c|c||c|c|c|c|c|c|c|c|c}
$N$    &  2  & 3 &  5 &  7 &  11 & 13  & 17 & 19 & 23 & 9 & 12  &  27 & 36  \\
\hline
\hline
\rule{0pt}{3ex}
$\mathcal{B}_{3,3}$ & \bf{1}&2&4&6&12&15&22&27&35&11 & 36  &87&468\\
\rule{0pt}{2ex}
$\mathcal{M}_{3,3}$ &0&\bf{1}&3&3&7&10 &15&18&24\ &9&40&78&480\\
\hline
\rule{0pt}{3ex}
$\mathcal{B}_{4,3}$ & 0 &0&0 & 0 & 0 &0 & 0&0&0&0&\bf{1} &5 &63 \\
\rule{0pt}{2ex}
$\mathcal{M}_{4,3}$ &0&0&0&0&1&2&5&7&12&\bf{1}&5&24&121\\
\hline
\rule{0pt}{3ex}
$\mathcal{B}_{4,4}$ &0&\bf{3}&6&9&17&20&29 & 35&45& 42 &101  & 620 & 2515 \\
\rule{0pt}{2ex}
$\mathcal{M}_{4,4}$&0&\bf{3}& 2  &3 &7&8&13&17&23 &45&123&649&2716\\
\hline
\rule{0pt}{3ex}
$\mathcal{B}_{5,3}$ & 0&0&0&0&0&0&0&0&0&0&0&0& \bf{1} \\
\rule{0pt}{2ex}
$\mathcal{M}_{5,3}$ &0&0&0&0&0&0&0&0&0&0&0&\bf{1}&7\\
\hline
\rule{0pt}{3ex}
$\mathcal{B}_{5,4}$ 0&0&0&0&0&0& 0&0&0&0&\bf{3}&4&55&267\\
\rule{0pt}{2ex}
$\mathcal{M}_{5,4}$&0&0&0&0&1&2&5&7&12 & \bf{5}& 12&122&?\\
\hline
\rule{0pt}{3ex}
$\mathcal{B}_{5,5}$ &\bf{1}&3&9&12&22&26&37&44&56 &30&161&572&?\\
\rule{0pt}{2ex}
$\mathcal{M}_{5,5}$  &0 &\bf{1}&3 &3 &7 &8&13&17&23&17&212 &?&?
\end{tabular}
}

\bibliographystyle{alpha}
\bibliography{cyclic}

\begin{thebibliography}{APS18}

\bibitem[APS18]{ash}
Avner Ash, Andrew Putman, and Steven~V. Sam.
\newblock Homological vanishing for the {S}teinberg representation.
\newblock {\em Compos. Math.}, 154(6):1111--1130, 2018.

\bibitem[BG01]{borisov}
Lev~A. Borisov and Paul~E. Gunnells.
\newblock Toric modular forms and nonvanishing of {$L$}-functions.
\newblock {\em J. Reine Angew. Math.}, 539:149--165, 2001.

\bibitem[BG03]{borisov-higher}
Lev~A. Borisov and Paul~E. Gunnells.
\newblock Toric modular forms of higher weight.
\newblock {\em J. Reine Angew. Math.}, 560:43--64, 2003.

\bibitem[CP17]{church}
Thomas Church and Andrew Putman.
\newblock The codimension-one cohomology of {${\rm SL}_n\Bbb Z$}.
\newblock {\em Geom. Topol.}, 21(2):999--1032, 2017.

\bibitem[gro17]{spasm}
The~SpaSM group.
\newblock {\em {SpaSM}: a Sparse direct Solver Modulo $p$}, v1.2 edition, 2017.
\newblock {\tt http://github.com/cbouilla/spasm}.

\bibitem[KT17]{KT}
M.~Kontsevich and Yu. Tschinkel.
\newblock Specialization of birational types, 2017.
\newblock {\tt arXiv:1708.05699}.

\end{thebibliography}
\end{document}